\theoremstyle{definition}
\newtheorem{definition}{Definition}[section]
\newtheorem{example}[definition]{Examples}
\newtheorem{remark}[definition]{Remark}
\theoremstyle{plain}
\newtheorem{theorem}[definition]{Theorem}
\newtheorem{lemma}[definition]{Lemma}
\newtheorem{proposition}[definition]{Proposition}
\newtheorem{corollary}[definition]{Corollary}
\DeclareMathOperator{\supp}{supp}
\DeclareMathOperator{\dom}{Dom}
\DeclareMathOperator{\ran}{Ran}
\title{The twisted coarse Baum--Connes conjecture and relative hyperbolic groups}
\author{Jintao Deng\thanks{Department of Mathematics, State University of New York at Buffalo, Buffalo, NY, 14260} \and Ryo Toyota\thanks{Department of Mathematics, Texas A\&M University, College Station, TX 77843, USA}}
\date{}
\begin{document}

\maketitle

\begin{abstract}
In this paper, we introduce a notion of stable coarse algebras for metric spaces with bounded geometry, and formulate the twisted coarse Baum--Connes conjecture with respect to stable coarse algebras. We establish permanence properties of the conjecture under coarse equivalences, as well as for unions and subspaces.

As an application, we study higher index theory for a group $G$ that is hyperbolic relative to 
a finite family of subgroups $\{H_1, H_2, \dots, H_N\}$. We prove that $G$ satisfies the twisted coarse Baum--Connes conjecture with respect to any stable coarse algebra if and only if each subgroup $H_i$ does. 
\end{abstract}

\tableofcontents

\section{Introduction}

Let $X$ be a metric space with bounded geometry. The coarse Baum-Connes conjecture claims that a certain coarse assembly map 
$$
\mu: KX_*(X)\to K_*(C^*(X))
$$
is an isomorphism, where the left-hand side is the coarse $K$-homology and the right-hand side is the $K$-theory of the Roe algebra of $X$. This conjecture has many important applications in geometry and topology, including the Gromov--Lawson conjecture on the existence of Riemannian metrics with positive scalar curvature, and the Novikov conjecture on the homotopy invariance of higher signature on oriented closed manifolds. It has been verified for a large class of metric spaces, for example, metric spaces which admit a coarse embedding into Hilbert space \cite{Yu2000CoarseBaumConnesforSpacesUniformlyEmbeddable}, and some metric spaces which are not coarsely embeddable into Hilbert space, including relative expanders \cite{Deng-Wang-Yu:CBC-for-relative-expanders}.

To study the coarse Baum--Connes conjecture for the extension $1\to N \to G\to G/N \to 1$ with $N$ and $G/N$ coarsely embeddable into Hilbert space (cf.~\cite{Arzhantseva-Tessera:Extension-Coarse-Embedding}), the first author and L.~Guo \cite{DengGuo2024TwistedRoeAlgebras} developed a twisted coarse Baum-Connes conjecture with coefficients. In their work, they introduced the notion of a coarse algebra. Roughly speaking, for a metric space $X$ with bounded geometry, a $C^*$-algebra $\mathcal{A}$ and the algebra of all compact operators $\mathcal{K}$ on a separable infinite dimensional Hilbert space $\mathcal{H}$, a coarse algebra $\Gamma(X, \mathcal{A}\otimes \mathcal{K})$ is a $C^*$-subalgebra of $\ell^{\infty}(X, \mathcal{A}\otimes\mathcal{K})$ which is invariant under pullbacks by partial translations. This is a coarse geometric counterpart of a coefficient of a crossed product $C^*$-algebra. Then they formulated the twisted Roe algebra $C^*(X;\Gamma(X,\mathcal{A}\otimes\mathcal{K}))$ associated with a coarse algebra $\Gamma(X,\mathcal{A}\otimes\mathcal{K})$. Using this framework, they verified that the coarse Baum-Connes conjecture holds for a metric space with a coarse fibration for which each fiber and the base space are coarsely embeddable into Hilbert space, and this applies to extensions of coarse embeddable groups.

Based on the aforementioned framework of the twisted coarse Baum--Connes conjecture, we introduce the notion of stable coarse algebra for a metric space with bounded geometry and define a refined version of twisted coarse Baum--Connes conjecture. This refined version of twisted coarse Baum--Connes conjecture has better permanence properties, such as inheritance to subspaces and invariance under coarse equivalence.
We say that a coarse algebra $\Gamma(X, \mathcal{A}\otimes \mathcal{K})$ is stable if it is invariant under multiplication in the $B(\mathcal{H})$-direction and under forming a matrix (Definition~\ref{stable coarse algebra}). 
We can generalize the twisted Roe algebra to non-discrete metric spaces with bounded geometry. Let $M$ be a metric space and $X\subset M$ a net in $M$. Given a stable coarse $X$-algebra $\Gamma(X, \mathcal{A}\otimes \mathcal{K})$, we can define the $\Gamma(X, \mathcal{A}\otimes \mathcal{K})$-twisted Roe algebra $C^*(M; \Gamma(X, \mathcal{A}\otimes \mathcal{K}))$.

In this paper, we study twisted Roe algebras $C^*(M;\Gamma(X,\mathcal{A}\otimes \mathcal{K}))$ associated with a stable coarse algebra $\Gamma(X,\mathcal{A}\otimes\mathcal{K})$. 

\begin{theorem}
    Let $M$ and $N$ be metric spaces with bounded geometry with fixed nets $X\subset M$ and $Y\subset N$, respectively, and let $f: M\to N$ be a coarse equivalence. For any stable coarse $Y$-algebra $\Gamma(Y, \mathcal{A}\otimes \mathcal{K})$, there exists a stable coarse $X$-algebra $\Gamma(X, \mathcal{A}\otimes \mathcal{K})$ such that 
    $$
    C^*(M; \Gamma(X, \mathcal{A}\otimes \mathcal{K}))\cong
    C^*(N; \Gamma(Y, \mathcal{A}\otimes \mathcal{K})).
    $$
\end{theorem}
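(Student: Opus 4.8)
The plan is to place $M$ and $N$ simultaneously, through $f$, as coarsely dense subspaces of a single metric space with bounded geometry, to realize both twisted Roe algebras as full corners of one twisted Roe algebra on that space, and then to finish with a Morita/stabilization argument.

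\emph{Reduction and the bridging space.} Since $X\subset M$ and $Y\subset N$ are nets, the inclusions are coarse equivalences and the twisted Roe algebra is built only from the $X$- (resp.\ $Y$-) controlled module; hence $C^*(M;\Gamma(X,\mathcal A\otimes\mathcal K))\cong C^*(X;\Gamma(X,\mathcal A\otimes\mathcal K))$, and similarly over $N$, so we may assume $M=X$ and $N=Y$ are discrete. Composing $f|_X$ with a coarse inverse $N\to Y$ of the inclusion gives a coarse equivalence $h\colon X\to Y$; as both spaces have bounded geometry, the fibres of $h$ are uniformly finite and $h(X)$ is coarsely dense in $Y$. Let $Z$ be the disjoint union $X\sqcup Y$ with $d_Z|_X=d_X$, $d_Z|_Y=d_Y$ and $d_Z(x,y)=\inf_{x'\in X}\bigl(d_X(x,x')+1+d_Y(h(x'),y)\bigr)$; one checks that this is a metric, that $Z$ again has bounded geometry, and that $X$ and $Y$ are coarsely dense subspaces of $Z$ with both inclusions coarse equivalences.

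\emph{Transporting the coefficients.} Let $\Gamma(Z,\mathcal A\otimes\mathcal K)$ be the smallest stable coarse $Z$-algebra containing every pullback $s^{*}(\widehat\phi)$, where $\phi\in\Gamma(Y,\mathcal A\otimes\mathcal K)$, $\widehat\phi$ is $\phi$ extended by $0$ off $Y$, and $s$ runs over the partial translations of $Z$; a direct check gives $\Gamma(Z,\mathcal A\otimes\mathcal K)|_{Y}=\Gamma(Y,\mathcal A\otimes\mathcal K)$. More generally, for any coarsely dense subset $A\subseteq Z$ the restriction $\Gamma(Z,\mathcal A\otimes\mathcal K)|_{A}$ is again a stable coarse $A$-algebra, and if $\chi_A$ denotes the projection onto the $A$-part of the ample $Z$-module underlying $C^*(Z;\Gamma(Z,\mathcal A\otimes\mathcal K))$ then
\[
\chi_A\, C^*\!\bigl(Z;\Gamma(Z,\mathcal A\otimes\mathcal K)\bigr)\,\chi_A \;\cong\; C^*\!\bigl(A;\Gamma(Z,\mathcal A\otimes\mathcal K)|_{A}\bigr),
\]
with $\chi_A$ a \emph{full} projection in $C^*(Z;\Gamma(Z,\mathcal A\otimes\mathcal K))$ --- fullness is the standard coarse-geometry argument producing finite-propagation partial isometries from an $R$-dense net, and uses only that $A$ is coarsely dense and that $Z$ has bounded geometry. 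Now set $\Gamma(X,\mathcal A\otimes\mathcal K):=\Gamma(Z,\mathcal A\otimes\mathcal K)|_{X}$, a stable coarse $X$-algebra, and apply the displayed identity with $A=X$ and with $A=Y$.

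\emph{From full corners to isomorphism.} Each corner above is again a twisted Roe algebra with stable coefficients, and such an algebra is a stable $C^*$-algebra: the $\mathcal K$ in the coefficients, together with closure of $\Gamma$ under matrix amplification and under multiplication in the $B(\mathcal H)$-direction (Definition~\ref{stable coarse algebra}), lets one absorb $M_\infty$. A full corner of a $\sigma$-unital stable $C^*$-algebra is Morita equivalent to it and hence --- both sides being $\sigma$-unital and stable --- isomorphic to it (Brown's stabilization theorem); applying this on both sides gives
\[
C^*\!\bigl(X;\Gamma(X,\mathcal A\otimes\mathcal K)\bigr)\;\cong\;C^*\!\bigl(Z;\Gamma(Z,\mathcal A\otimes\mathcal K)\bigr)\;\cong\;C^*\!\bigl(Y;\Gamma(Y,\mathcal A\otimes\mathcal K)\bigr),
\]
which together with the first paragraph yields the theorem. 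The main obstacle I anticipate is exactly the bookkeeping in the middle step: verifying that the restriction of a stable coarse algebra to a coarsely dense subspace is still a stable coarse algebra (partial-translation invariance must be pushed across the inclusion) and that the corner computes the right twisted Roe algebra. The stability axioms are tailored precisely so that these closure properties hold, which is also why the statement is false for arbitrary non-stable coarse algebras.
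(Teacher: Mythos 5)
Your overall strategy (embed $X$ and $Y$ coarsely densely into a bridging space $Z$, restrict a transported coefficient algebra to each, and compare corners) is reasonable up to the last step, but the last step has a genuine gap that the rest of the argument cannot absorb. You pass from ``$\chi_X$ and $\chi_Y$ are full projections'' to an isomorphism of the corners with the ambient algebra by invoking Brown's stabilization theorem / Brown--Green--Rieffel, which requires the algebras involved to be $\sigma$-unital (and stable as $C^*$-algebras). Twisted Roe algebras are not $\sigma$-unital: like $\ell^\infty(X,\mathcal K)\subset C^*(X)$, they admit no strictly positive element, so Morita equivalence coming from a full corner cannot be upgraded to isomorphism this way. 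Moreover, ``stable coarse algebra'' in the sense of Definition~\ref{stable coarse algebra} is a condition on the coefficient algebra $\Gamma\subset\ell^\infty(X,\mathcal A\otimes\mathcal K)$ (closure under $B(\mathcal H)$-multiplication and under $\psi(\,\cdot\otimes k)$); it does not make $C^*(M;\Gamma(X,\mathcal A\otimes\mathcal K))$ a stable $C^*$-algebra --- such algebras are at best quasi-stable, which is exactly why the paper uses quasi-stability rather than stability elsewhere. So the chain ``full corner $\Rightarrow$ Morita equivalent $\Rightarrow$ isomorphic'' breaks at the second arrow, and this is the heart of the theorem, not bookkeeping. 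The paper avoids this entirely: it first proves the isomorphism by conjugation with an explicit block unitary when the coarse equivalence is bijective on suitably chosen nets with compatible Borel covers (Proposition~\ref{easiest coarse equivalence}), and then shows the twisted Roe algebra is independent of the choice of net and Borel cover (Lemma~\ref{independence of nets}), which is where conditions (3) and (4) of Definition~\ref{stable coarse algebra} are actually used; the coarse $X$-algebra is the pullback \eqref{twist for coarse equi} along a coarse equivalence of nets.

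Two smaller points. First, the metric you write down on $Z=X\sqcup Y$ need not satisfy the triangle inequality: a coarse equivalence may compress distances by an arbitrary proper function, so $d_X(x_1',x_2')$ is not bounded by $d_Y(h(x_1'),h(x_2'))+2$, which is what your formula would require. This is repairable (take the largest metric compatible with the constraints and note that the coarse structures on $X$ and $Y$ are unchanged), but ``one checks that this is a metric'' is false as stated. Second, the claims $\Gamma(Z,\mathcal A\otimes\mathcal K)|_Y=\Gamma(Y,\mathcal A\otimes\mathcal K)$ and fullness of $\chi_X,\chi_Y$ in the twisted algebra are plausible but not ``direct checks'': restriction to $Y$ is not compatible with pulling back along partial translations of $Z$ that leave $Y$, so one has to argue with the family of all such pullbacks, and fullness requires the translation partial isometries to be multipliers of the twisted algebra (again via condition (3)). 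These can be fixed, but even with them fixed the argument stalls at the Morita-to-isomorphism step; to complete your route you would have to replace it with a direct construction of a conjugating unitary, which is essentially the paper's proof.
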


Let $X$ be a discrete metric space with bounded geometry, and $\Gamma(X, \mathcal{A}\otimes \mathcal{K})$ a coarse $X$-algebra. For each Rips complex $P_s(X)$, we can define the $\Gamma(X, \mathcal{A}\otimes \mathcal{K})$-twisted localization algebra, denoted by $C^*_L(P_s(X), \Gamma(X, \mathcal{A}\otimes \mathcal{K}))$. This is a $C^*$-subalgebra of the localization algebra with coefficients in $\mathcal{A}$. There is a $*$-homomorphism 
$$e_*: K_*(C_L^*(P_s(X);\Gamma(X,\mathcal{A}\otimes \mathcal{K})))\to K_*(C^*(P_s(X);\Gamma(X, \mathcal{A}\otimes \mathcal{K})))\cong K_*(C^*(X;\Gamma(X, \mathcal{A}\otimes \mathcal{K})))$$
induced by the evaluation.
Passing to infinity, we obtain the $\Gamma(X,\mathcal{A}\otimes \mathcal{K})$-twisted coarse assembly map
$$
 \mu: \varinjlim_s K_*(C_L^*(P_s(X);\Gamma(X,\mathcal{A}\otimes \mathcal{K})))\to K_*(C^*(X;\Gamma(X, \mathcal{A}\otimes \mathcal{K}))).
$$
The twisted coarse Baum--Connes conjecture \cite{DengGuo2024TwistedRoeAlgebras} claims that the twisted coarse assembly map is an isomorphism with respect to any coarse $X$-algebra $\Gamma(X, \mathcal{A}\otimes \mathcal{K})$. We study when it is an isomorphism for any stable coarse algebras.

The idea of the twisted coarse Baum--Connes conjecture originated from G.~Yu's proof of coarse Baum--Connes conjecture for metric spaces with bounded geometry admitting a coarse embedding into Hilbert space  \cite{Yu2000CoarseBaumConnesforSpacesUniformlyEmbeddable}.

%It is well known that the coarse Baum–Connes conjecture is not stable under taking subspaces. For example, it is known that $X\times \mathbb{Z}_+$ has trivial coarse $K$-homology and trivial $K$-theory of Roe algebra for every bounded geometry $X$, yet $X$ viewed as a subspace does not necessarily satisfy the conjecture.

The twisted coarse Baum--Connes conjecture captures more information about subspaces. We prove the following coarse analogue of the permanence property of Baum--Connes conjecture with coefficients under taking subgroups \cite{ChabertEchterhoff2001PermanencePropertiesofBC}. 

\begin{theorem}
    Let $X$ be a metric space with bounded geometry and let $Y\subset X$ be a subspace. If $X$ satisfies the twisted coarse Baum--Connes conjecture with respect to any stable coarse algebras, then so does $Y$. 
\end{theorem}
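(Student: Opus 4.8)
The plan is to \emph{coarsely induce} the coefficient algebra from $Y$ up to $X$ and then invoke the hypothesis on $X$. Fix an arbitrary stable coarse $Y$-algebra $\Gamma_Y := \Gamma(Y,\mathcal{A}\otimes\mathcal{K})$. Extend each of its elements by zero along the inclusion $Y\subset X$, and let $\Gamma_X := \Gamma(X,\mathcal{A}\otimes\mathcal{K})$ be the smallest coarse $X$-algebra containing all of these zero-extensions, i.e.\ the $C^*$-subalgebra of $\ell^\infty(X,\mathcal{A}\otimes\mathcal{K})$ generated by their pullbacks under all partial translations of $X$ (adding, if the definition of a coarse algebra requires it, the further elements needed to meet its axioms). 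The first, routine, step is to record two properties of this \emph{coarse induction}: (i) $\Gamma_X$ is again \emph{stable}, since invariance under multiplication by $1\otimes B(\mathcal{H})$ and under matrix amplification acts on the coefficient factor and therefore commutes with pullbacks by partial translations, which act on the $X$-variable, so these operations are inherited from $\Gamma_Y$; and (ii) restriction of functions to $Y$ recovers the original coefficient, $\Gamma_X|_Y=\Gamma_Y$, because restriction to $Y$ is a $*$-homomorphism carrying a product of pullbacks by partial translations of $X$ to a product of pullbacks by partial translations of $Y$, and $\Gamma_Y$ is already a coarse $Y$-algebra that contains each original element. I would emphasize here that $\Gamma_X$ is in general \emph{not} pulled back from a coarse algebra on a strictly smaller space (for an infinite-index subgroup $H\le G$, inducing the trivial coefficient $\mathbb{C}$ on $H$ already produces a genuinely nontrivial coefficient on $G$); this is exactly why one must assume the conjecture for $X$ \emph{with respect to every} stable coarse algebra, and it is what lets the argument go through even though the untwisted coarse Baum--Connes conjecture does not pass to subspaces.

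The technical core is an imprimitivity-type comparison. Although a single element of $\Gamma_X$ may be globally supported on $\{x\in X:d(x,Y)<\infty\}$, its \emph{coarse} support is concentrated near $Y$: approximating it within $\varepsilon$ by a finite sum of products of pullbacks of uniformly bounded displacement shows that, up to $\varepsilon$, its support lies in the $R$-neighbourhood $N_R(Y)$ for some finite $R$. The goal is to deduce from this that, writing $\chi_Y$ for the projection onto the $Y$-indexed part, the corner inclusion
\[
C^*(Y;\Gamma_Y)\;=\;\chi_Y\,C^*(X;\Gamma_X)\,\chi_Y \;\hookrightarrow\; C^*(X;\Gamma_X)
\]
is a \emph{full} corner: any element of $C^*(X;\Gamma_X)$ can be pushed into the $Y$-corner by multiplying, on the left and on the right, by the partial isometries implementing the partial translations that produced the coefficient in the first place. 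Running the same argument for each Rips complex, and using $\Gamma_X|_Y=\Gamma_Y$, yields full corner inclusions $C_L^*(P_s(Y);\Gamma_Y)\hookrightarrow C_L^*(P_s(X);\Gamma_X)$ that are compatible with the evaluation homomorphisms (hence with the maps $e_*$) and with the structure maps of the inductive limits over $s$. If convenient one may also first replace $Y$ by the coarsely equivalent subspace $N_R(Y)\subset X$ and invoke the coarse invariance of the conjecture established above.

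Since full corner inclusions give Morita equivalences, and hence isomorphisms in $K$-theory, one obtains a commuting square
\[
\begin{tikzcd}[column sep=large]
\varinjlim_s K_*\!\big(C_L^*(P_s(Y);\Gamma_Y)\big) \arrow[r,"\mu_Y"] \arrow[d,"\cong"'] & K_*\!\big(C^*(Y;\Gamma_Y)\big) \arrow[d,"\cong"] \\
\varinjlim_s K_*\!\big(C_L^*(P_s(X);\Gamma_X)\big) \arrow[r,"\mu_X"] & K_*\!\big(C^*(X;\Gamma_X)\big)
\end{tikzcd}
\]
whose vertical arrows are isomorphisms. By hypothesis $\mu_X$ is an isomorphism for the stable coarse algebra $\Gamma_X$, so $\mu_Y$ is an isomorphism. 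As $\Gamma_Y$ was an arbitrary stable coarse $Y$-algebra, $Y$ satisfies the twisted coarse Baum--Connes conjecture with respect to every stable coarse algebra.

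I expect the fullness of the corner to be the main obstacle: one has to reconcile the fact that individual elements of $\Gamma_X$ need not be supported in any bounded neighbourhood of $Y$ with the claim that, coarsely, the twisted Roe algebra of $X$ (and each twisted localization algebra) is no larger than that of $Y$. Concretely this demands careful bookkeeping of the supports of elements of $\Gamma_X$ under partial translations, a choice of the partial isometries used to push operators into the $Y$-corner that is uniform across the whole inductive system of Rips complexes, and a check that all of these identifications intertwine the evaluation maps defining the assembly maps. The stability hypothesis on the coefficients is precisely what supplies the room --- the extra copies of $\mathcal{K}$ --- in which to realize these partial isometries inside the relevant $C^*$-algebras.
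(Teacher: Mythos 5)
Your construction of the induced coefficient and your treatment of the Roe-algebra side are essentially in line with the paper: inducing a stable coarse $X$-algebra from $\Gamma(Y,\mathcal{A}\otimes\mathcal{K})$ and identifying $K_*(C^*(X;\Gamma(X,\mathcal{A}\otimes\mathcal{K})))\cong K_*(C^*(Y;\Gamma(Y,\mathcal{A}\otimes\mathcal{K})))$ is the content of Theorem~\ref{twist for subsp.}, and your observation that every finite-propagation twisted operator is approximately supported on $N_R(Y)\times N_R(Y)$ for some finite $R$ is exactly the mechanism used there. The genuine gap is on the left-hand side of your square: the claim that $C^*_L(P_s(Y);\Gamma(Y,\mathcal{A}\otimes\mathcal{K}))$ sits as a \emph{full} corner in $C^*_L(P_s(X);\Gamma(X,\mathcal{A}\otimes\mathcal{K}))$ is false, and with it the asserted Morita isomorphism at each fixed $s$. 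Two things go wrong. First, for an element $(T_t)_{t\in[1,\infty)}$ of the twisted localization algebra, each $T_t$ is only approximately supported in some $N_{R(t)}(Y)$ with $R(t)$ depending on $t$ and on the accuracy, with no uniformity in $t$; one can build $(T_t)$ of norm one, with propagation tending to zero, whose support drifts to distance $\to\infty$ from $Y$ as $t\to\infty$. Any element of the ideal generated by the corner $\chi_{P_s(Y)}C^*_L(P_s(X);\Gamma(X,\mathcal{A}\otimes\mathcal{K}))\chi_{P_s(Y)}$ is, for large $t$, supported arbitrarily close to $P_s(Y)$ (because the outer factors have propagation tending to zero), so such a $(T_t)$ stays at sup-norm distance at least $1$ from that ideal: the corner is not full. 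Second, the partial isometries you want to use to push operators into the $Y$-corner have fixed nonzero propagation, and such operators are not multipliers of a localization algebra in any useful sense: multiplying by them destroys the condition $\mathrm{Propagation}(f(t))\to 0$. So the compression/fullness mechanism that works for the Roe algebra cannot be run verbatim for $C^*_L$.

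This is precisely why the paper's proof of Theorem~\ref{permanence for subsp.} takes a different route: it introduces the uniform twisted localization algebra $C^*_{L,\mathrm{uniform}}$ (Definition~\ref{uniform twisted}), in which the coefficient condition holds with a single $r$ uniformly in $t$, proves by induction on the dimension of the complex (Mayer--Vietoris plus an evaluation/quasi-stability argument) that the inclusion $C^*_{L,\mathrm{uniform}}\subset C^*_L$ is a $K$-theory isomorphism, and only then can a class be represented by an element supported in $P_s(N_R(Y))$ for one $R$; finally, the identification of $K_*(C^*_L(P_s(N_R(Y));\cdot))$ with the $Y$-side uses the strong Lipschitz homotopy invariance of twisted localization algebras (Lemma~\ref{homotopy}) under the coarse equivalence $N_R(Y)\to Y$, which increases the Rips scale, so the isomorphism is obtained only after passing to $\varinjlim_s$, not at fixed $s$. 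To repair your argument you would need to supply substitutes for these two steps (a uniformization statement in $t$, and covering isometries with propagation tending to zero coming from a homotopy, rather than fixed partial translations); as written, the "full corner" step does not hold.
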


Another aim of this paper is to apply our new framework to study the higher index theory for relatively hyperbolic groups.
\begin{comment}
The concept of hyperbolicity for groups was considered by M. Gromov \cite{Gromov1987HyperbolicGroups}. 
It has been the central concept in many area of mathematics and there are fruitful results in this area. V. Lafforgue \cite{Lafforgue:BC-hyperbolic} and Mineyev-Yu \cite{Mineyev-Yu:BC-hyperbolic} showed that the Baum--Connes conjecture holds for hyperbolic groups. 
Gromov then generalize the concept of hyperbolicity to relative hyperbolicity. 
\end{comment}
%A group $G$ is said to be relatively hyperbolic if, after collapsing certain specified subgroups $\{H_1,\cdots, H_N\}$ into single points, the resulting quotient behaves like a Gromov-hyperbolic space.
The concept of relative hyperbolicity has been extensively studied over the past two decades. For a group $G$ that is hyperbolic relative to a finite collection of peripheral subgroups $\{H_1,\ldots,H_N\}$, D.~Osin \cite{Osin2005AsymptoticDimensionofRelativelyhyperbolicGroups} proved that $G$ has finite asymptotic dimension if and only if each subgroup $H_i$ does. N.~Ozawa \cite{Ozawa2006BoundaryAmenabilityofRelative} established that $G$ has Yu's Property A if and only if each $H_i$ does. Furthermore, M.~Dadarlat and E.~Guentner \cite{DadarlatGuentner2007Uniform} showed that $G$ admits a coarse embedding into Hilbert space if and only if each $H_i$ does.

A natural question is whether the coarse Baum--Connes conjecture holds for the relative hyperbolic group when each subgroup $H_i$ satisfies the conjecture. Fukaya and Oguni \cite{FukayaOguni2012TheCoarseBaumConnesconjectureforrelatively} proved that if $G$ is a finitely generated group that is hyperbolic relative to a finite family of subgroups $\{H_1,\cdots ,H_N\}$ such that
\begin{enumerate}[(1)]
    \item each $H_i$ satisfies the coarse Baum--Connes conjecture, and
    \item each $H_i$ admits a finite $H_i$-simplicial
        complex $\underline{E}H_i$ which is a universal space for proper $H_i$-actions,
\end{enumerate}
then $G$ satisfies the coarse Baum--Connes conjecture. 

In our setting of twisted coarse Baum--Connes conjecture with respect to stable coarse algebras, we are able to remove the second assumption mentioned above. Namely, we obtain the following:
\begin{theorem}\label{main}
    Let $G$ be a finitely generated group which is hyperbolic relative to a finite family of subgroups $\{H_1,\cdots ,H_N\}$. 
    Then $G$ satisfies the twisted coarse Baum--Connes conjecture with respect to any stable coarse algebras if and only if each $H_i$ does.
\end{theorem}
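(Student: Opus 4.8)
We dispose of the ``only if'' direction first, where the work is already done by the permanence results above. It is standard that in a relatively hyperbolic group every peripheral subgroup $H_i$ is undistorted, i.e.\ the inclusion $H_i\hookrightarrow G$ is a quasi-isometric embedding of word-metric spaces; in particular $H_i$ is coarsely equivalent to the subspace $H_i\subset G$. Hence if $G$ satisfies the twisted coarse Baum--Connes conjecture for every stable coarse algebra, the subspace-permanence theorem gives the conjecture for the subspace $H_i\subset G$, and coarse invariance transfers it to $H_i$ with its word metric.

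For the converse we pass to the Groves--Manning cusped space $\hat X=\hat X(G,\{H_1,\dots,H_N\})$, the graph obtained from $\mathrm{Cay}(G)$ by attaching a combinatorial horoball $\mathcal H_{\gamma H_i}$ along every peripheral coset $\gamma H_i$. The features I would use are: $\hat X$ is a $\delta$-hyperbolic graph of bounded geometry on which $G$ acts properly by isometries; the Gromov boundary $\partial\hat X$ splits into conical limit points and a set of bounded parabolic points naturally identified with the peripheral cosets (the stabiliser of the parabolic point of $\gamma H_i$ being $\gamma H_i\gamma^{-1}$); and, because peripheral subgroups are undistorted, the identity map exhibits $(G,d_G)$ as coarsely equivalent to the subspace $(G,d_{\hat X})\subset\hat X$ --- the word metric and the cusped metric differ only by the logarithmic reparametrisation of the peripheral cosets, which is a coarse equivalence. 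By coarse invariance and the subspace-permanence theorem it therefore suffices to prove that $\hat X$ satisfies the twisted coarse Baum--Connes conjecture with respect to every stable coarse algebra.

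To do this I would run, on a cofinal family of Rips complexes $P_s(\hat X)$, the coarse Dirac--dual-Dirac argument for hyperbolic spaces, adapting the method of Mineyev--Yu to the coarse and twisted setting: the hyperbolicity of $\hat X$ produces a geodesic contraction towards $\partial\hat X$ which, after cutting and pasting, reduces the conjecture to ``local'' statements along the boundary directions. Along the conical directions this should be unconditional, using only that $\hat X$ is $\delta$-hyperbolic of bounded geometry. Along the parabolic direction of $\gamma H_i$ the local geometry is that of the horoball $\mathcal H_{\gamma H_i}$ together with the coset $\gamma H_i$; the horoball is flasque in its depth coordinate and so contributes nothing to $K$-theory, while the residual input required to splice the parabolic piece to the conical part is exactly the twisted coarse Baum--Connes conjecture for $\gamma H_i\gamma^{-1}\cong H_i$ with a restricted stable coarse algebra --- our hypothesis. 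This is the point at which Fukaya--Oguni instead invoked a finite model for $\underline{E}H_i$; the stable-coarse-algebra formalism of this paper is precisely what lets one feed in the conjecture for $H_i$ directly. The union-permanence theorem then assembles the local data, uniformly in $s$, and passing to the limit shows the twisted coarse assembly map for $\hat X$ is an isomorphism; combined with the previous paragraph this gives the conjecture for $G$.

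The routine parts are the ``only if'' direction and the coarse-geometric identifications around $\hat X$. I expect the main obstacle to be the second half of the preceding paragraph: pushing the Dirac--dual-Dirac / ``going-down'' machinery for the hyperbolic space $\hat X$ through in the presence of an \emph{arbitrary} stable coarse coefficient algebra, and isolating the parabolic contribution cleanly as twisted coarse Baum--Connes for the $H_i$. Concretely, one needs a coefficient-aware version of the contraction-towards-the-boundary estimates, and a Mayer--Vietoris over the Rips filtration that retains control of coefficients throughout; this bookkeeping, rather than any single conceptual step, is where I anticipate the real difficulty.
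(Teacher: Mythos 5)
Your ``only if'' direction is fine and coincides with the paper's: the inclusion $H_i\hookrightarrow G$ is a coarse embedding (no undistortion is actually needed, only finite generation of the $H_i$), so Theorem~\ref{permanence for subsp.} together with coarse invariance gives the conjecture for $H_i$. The ``if'' direction, however, has genuine gaps, and it is not the route the paper takes. First, the reduction to the Groves--Manning cusped space $\hat X$ does not fit the framework you are invoking: when some $H_i$ is infinite, $\hat X$ fails to have bounded geometry (at depth $n$ in a combinatorial horoball a ball of radius $3$ meets an $H_i$-ball of radius comparable to $2^n$), so the twisted coarse Baum--Connes conjecture of Definition~\ref{twisted coarse baum--connes} is not even formulated for $\hat X$, and the subspace permanence theorem you want to apply (Theorem~\ref{permanence for subsp.}) assumes a bounded geometry ambient space. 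Second, the heart of your argument --- a ``coefficient-aware'' Dirac--dual-Dirac/Mineyev--Yu contraction toward $\partial\hat X$, with the horoballs declared flasque and the parabolic contribution ``isolated'' as the twisted conjecture for $H_i$ --- is asserted rather than proved; in the twisted setting the coefficient algebra over a horoball is pulled back from a stable coarse algebra on $G$, and there is no argument given (or known) that its contribution vanishes or reduces to $C^*_{L,0}$ of $H_i$ with a restricted stable coarse algebra. This is exactly the step where Fukaya--Oguni needed a finite $\underline{E}H_i$, and the claim that stability of the coarse algebra removes that hypothesis along your route is precisely what would have to be established; it is not bookkeeping.

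For comparison, the paper avoids the cusped space and boundary analysis altogether. It uses Osin's theorem that the relative Cayley graph $(G,d_{S\cup\bigcup H_i})$ is hyperbolic with finite asymptotic dimension, hence the family $\{(G,d_{S\cup\bigcup H_i})\}$ has finite decomposition complexity, and then runs an induction over decompositions: Lemma~\ref{inductive step} (a controlled Mayer--Vietoris argument in quantitative $K$-theory, using the uniformly excisive families of Section~5.1) shows the class $\mathfrak{D}$ of families satisfying the twisted conjecture uniformly is closed under decomposition, and Lemma~\ref{base step} handles the base case of families with bounded relative diameter by an induction on the relative balls $B(n)$, using Osin's separation lemma (Lemma~\ref{decomposition of Osin}) together with the union and uniform-family permanence results (Corollary~\ref{union}, Lemma~\ref{family of subspaces}); it is here, in the permanence statements for subspaces, unions and infinite disjoint families with a single stable coarse coefficient algebra, that stability is exploited --- not in any dual-Dirac construction. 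If you want to salvage your outline you would need either an intrinsic bounded-geometry substitute for $\hat X$ or an independent proof of the twisted conjecture for spaces with unbounded geometry horoballs, both of which are substantial open problems rather than adaptations.
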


The twisted coarse Baum--Connes conjecture with respect to any stable coarse algebras implies the usual coarse Baum--Connes conjecture. As an application, we can verify the coarse Baum--Connes conjecture for a larger class of metric spaces:

\begin{corollary}
  Let $G$ be a finitely generated group which is hyperbolic relative to a finite collection of subgroups ${H_1, H_2, \dots, H_n}$. If each $H_i$ is an extension of groups admitting a coarse embedding into Hilbert space, then $G$ satisfies the coarse Baum–Connes conjecture.  
\end{corollary}

Notably, the subgroups $H_i$ need not admit a coarse embedding into Hilbert space. This occurs, for example, in extensions constructed by G. Arzhantseva and R. Tessera in \cite{Arzhantseva-Tessera:Extension-Coarse-Embedding}. Such spaces provide a central motivation for this paper. Our results show that groups hyperbolic relative to such subgroups still satisfy the coarse Baum–Connes conjecture.

This paper is organized as follows. In Section 2, we recall basic concepts of coarse algebras and the twisted coarse Baum--Connes conjecture. Then prove permanence properties of the twisted coarse Baum--Connes conjecture with respect to any stable coarse algebras. In Section 3, we recall controlled $K$-theory and computational techniques. In Section 4, we clarify what we need to prove to complete the proof of Theorem~\ref{main} using the language of decomposition complexity. In Section 5, we complete the proof of Theorem~\ref{main}.

\section{The twisted coarse Baum--Connes conjecture with respect to stable coarse algebras}
In this section, we review some background on the coarse Baum–Connes conjecture together with its variants. More precisely, we will focus on a version of twisted coarse Baum-Connes conjecture with respect to certain stable coarse algebras. This is a new framework to study the coarse geometry of metric spaces with bounded geometry. Compared with the usual coarse Baum-Connes conjecture, the new framework captures more information of the large scale geometry of the considered space.

\subsection{Twisted Roe algebras}
 
In this subsection, we recall the definition of coarse algebras and use them to define the twisted Roe algebra for a metric space. The concept of coarse algebra was introduced by the first-named author and L.~Guo in \cite{DengGuo2024TwistedRoeAlgebras}, and it was used to study the coarse Baum--Connes conjecture for a coarse fibration whose base space and fiber admit a coarse embedding into Hilbert space. We introduce the notion of stable coarse algebras and prove that the corresponding twisted Roe algebras are invariant under coarse equivalence.

Let $X$ be a discrete metric space, and $\mathcal{A}$ a $C^*$-algebra. Let $\mathcal{K}$ be the algebra of all compact operators on an infinite-dimensional, separable Hilbert space $\mathcal{H}$ and $\mathcal{A}\otimes\mathcal{K}$ the tensor product. Denoted by $\ell^{\infty}(X, \mathcal{A}\otimes \mathcal{K})$ the $C^*$-algebra of all bounded, $(\mathcal{A}\otimes \mathcal{K})$-valued functions on $X$. We fix a $*$-isomorphism $\psi:\mathcal{K}\otimes \mathcal{K}\to \mathcal{K}$, and also the induced isomorphism ${\rm id}_{\mathcal{A}}\otimes \psi: \mathcal{A}\otimes \mathcal{K}\otimes \mathcal{K}\to \mathcal{A}\otimes \mathcal{K}$ is still denoted by $\psi$.

Let $D, R\subset X$ be subsets of $X$. A bijective map $v:D \to R$ is said to be a partial translation if we have 
$$
\sup_{x\in D}d(x,v(x))<\infty.
$$
For every function $f\in \ell^{\infty}(X,\mathcal{A}\otimes \mathcal{K})$ and any partial translation $v: D \to R$, we define
\[
v^*f(x)=
\begin{cases}
  f(v(x)),& \mbox{~if~} x\in D\\
  0,      & \mbox{otherwise.}
\end{cases}
\]

Now, we are ready to define coarse algebras and stable coarse algebras.
\begin{definition}\label{stable coarse algebra}
Let $X$ be a discrete metric space and $\mathcal{A}$ a $C^*$-algebras.  
    We say that a $C^*$-subalgebra $\Gamma(X,\mathcal{A}\otimes\mathcal{K})$ of $\ell^{\infty}(X,\mathcal{A}\otimes \mathcal{K})$ is a  coarse $X$-algebra if it satisfies the following conditions:
    \begin{enumerate}[(1)]
        \item For each $x\in X$, we have that $$\overline{\{f(x):f\in \Gamma(X, \mathcal{A}\otimes \mathcal{K})\}}=\mathcal{A}\otimes \mathcal{K};$$
        \item For each $f\in \Gamma(X, \mathcal{A}\otimes \mathcal{K})$ and each partial translation $v: D \to R$, we have that $v^*f\in  \Gamma(X, \mathcal{A}\otimes \mathcal{K})$.
    \end{enumerate}
In addition, a coarse $X$-algebra $\Gamma(X,\mathcal{A}\otimes\mathcal{K})$ is said to be stable if it satisfies the following conditions:         
    \begin{enumerate}
        \item[(3)] For any two bounded sequences $a,b\in \ell^{\infty}(X,B(\mathcal{H}))$, and $f\in \ell^{\infty}(X;\mathcal{A}\otimes \mathcal{K})$, we define a new function $f'\in \ell^{\infty}(X;\mathcal{A}\otimes \mathcal{K})$ by $$f'(x)=\left(\text{id}_{\mathcal{A}}\otimes a(x)\right)f(x)\left(\text{id}_{\mathcal{A}}\otimes b(x)\right).$$ Then $f\in \Gamma(X,\mathcal{A}\otimes\mathcal{K})$ implies $f'\in \Gamma(X,\mathcal{A}\otimes\mathcal{K})$ for any two bounded sequences $a,b\in \ell^{\infty}(X,B(\mathcal{H}))$;

        \item[(4)] For every $f\in \Gamma(X,\mathcal{A}\otimes \mathcal{K})$ and $k\in \mathcal{K}$, the function $x\mapsto \psi(f(x)\otimes k)$ is an element in $\Gamma(X,\mathcal{A}\otimes \mathcal{K})$.
    \end{enumerate}
\end{definition}
We remark here that Conditions $(3)$ and $(4)$ are used to guarantee the coarse invariance of twisted Roe algebras.

\begin{example}\label{example coarse algebra} Let $X$ be a discrete metric space and $\mathcal{A}$ a $C^*$-algebra.
\begin{enumerate}[(1)]
    \item The algebra $\ell^{\infty}(X,\mathcal{A}\otimes \mathcal{K})$ is a stable coarse $X$-algebra;
    \item The $C^*$-algebra $C_0(X, \mathcal{A}\otimes \mathcal{K})$ consisting of all bounded, $(\mathcal{A}\otimes \mathcal{K})$-valued functions vanishing at infinity is a stable coarse $X$-algebra.

    \item For a subspace $Y\subset X$, define  $\Gamma(X,\mathcal{A}\otimes\mathcal{K})$ to be the closure of
    \begin{align*}
        \Theta[X,\mathcal{A}\otimes\mathcal{K}]:=\{f\in \ell^{\infty}(X, \mathcal{A}\otimes \mathcal{K}):\exists r>0 \text{ s.t. } \supp (f) \subset N_r(Y)\},
    \end{align*}
    where $N_r(Y):=\{x\in X:d(x,Y)\leq r\}$ is the $r$-neighborhood of $Y$.
    Then $\Gamma(X,\mathcal{A}\otimes\mathcal{K})$ is a stable coarse $X$-algebra.
\end{enumerate}
\end{example}

Let $M$ be a proper metric space in the sense that every closed bounded subset is compact. We fix a net $X \subset M$ in $M$. Recall that a subset $X$ is called a net if there exists a positive number $C,D>0$ such that any pair of distinct points in $X$ has distance at least $C$, and $N_D(X)=M$, where $N_D(X)=\{m\in M: d(m,X)\leq D\}$. We recall the definition of bounded geometry.

\begin{definition}
    A (not necessarily discrete) metric space $M$ is said to have  bounded geometry if there exists $\delta>0$ such that for every $R>0$, there exists $K(R)\geq 0$ such that the number of points in a ball $B(x,R)$ with pairwise distance greater that $\delta$ does not exceed $K(R)$.
\end{definition}

From the definition of bounded geometry, it is easy to show the following Lemma (c.f. \cite{WillettYuHigherindextheoryBook}~Appendix A.1.10).
\begin{lemma}\label{borel cover}
If $M$ is a metric space with bounded geometry, then there exist a net $X\subset M$ and a Borel covering $\{B_x\}_{x\in X}$ for $M$ such that 
\begin{enumerate}[(1)]
    \item for each $x$, $B_x$ contains a nonempty open subset of $M$,
    \item for any $x,y \in X$ with $x\neq y$, we have $B_x\cap B_y=\emptyset$,
    \item for each $x\in X$, we have $x\in B_x$,
    \item $\sup_{x\in X} {\rm diam}(B_x)<\infty$.
\end{enumerate}
\end{lemma}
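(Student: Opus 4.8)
The plan is to take $X$ to be a maximal $\epsilon$-separated subset of $M$ for a suitable $\epsilon>0$ and then construct the Borel partition $\{B_x\}$ by hand. First I would apply Zorn's lemma to the poset of $\epsilon$-separated subsets of $M$, ordered by inclusion (the union of a chain of $\epsilon$-separated sets is again $\epsilon$-separated, so upper bounds exist), to obtain a maximal such set $X$. By maximality we get $d(m,X)<\epsilon$ for every $m\in M$: otherwise $X\cup\{m\}$ would still be $\epsilon$-separated. Hence $N_\epsilon(X)=M$, the points of $X$ are pairwise at distance $\geq\epsilon$, and $X$ is a net. If $\epsilon$ is chosen larger than the constant $\delta$ from the definition of bounded geometry, then $X$ is moreover locally finite (each ball meets $X$ in a finite set); this is the feature relevant elsewhere in the paper, though it is not needed for properties $(1)$--$(4)$.

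The key idea is to reserve an open ball around each net point \emph{before} doing any cutting. Fix a well-ordering of $X$, write $X=\{x_\alpha\}$, and set $V_\alpha:=B(x_\alpha,\epsilon/3)$; since $X$ is $\epsilon$-separated, the $V_\alpha$ are pairwise disjoint. Let $W:=M\setminus\bigcup_\alpha V_\alpha$, which is closed because $\bigcup_\alpha V_\alpha$ is open. Now distribute $W$ among the net points by a ``least index that is close enough'' rule: for $w\in W$, let $W_\alpha$ contain exactly those $w$ for which $\alpha$ is the least index with $d(w,x_\alpha)<\epsilon$ (such an index exists since $d(w,X)<\epsilon$, and a least one exists by the well-ordering). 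Finally set $B_{x_\alpha}:=V_\alpha\cup W_\alpha$.

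It then remains to check the four properties, all of which are short. The families $\{V_\alpha\}$ and $\{W_\alpha\}$ together consist of pairwise disjoint sets whose union is $(\bigcup_\alpha V_\alpha)\cup W=M$, so $\{B_{x_\alpha}\}$ is a partition of $M$, giving $(2)$ and the covering property. We have $x_\alpha\in V_\alpha\subseteq B_{x_\alpha}$, giving $(3)$. Each $B_{x_\alpha}$ lies in $B(x_\alpha,\epsilon)$ ($V_\alpha$ trivially, and $W_\alpha$ because membership forces $d(w,x_\alpha)<\epsilon$), so ${\rm diam}(B_{x_\alpha})\leq 2\epsilon$, giving $(4)$. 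For Borel measurability, $W$ is closed, $V_\alpha$ and $B(x_\alpha,\epsilon)$ are open, each $\{m:d(m,x_\beta)\geq\epsilon\}$ is closed, and $W_\alpha=W\cap B(x_\alpha,\epsilon)\cap\bigcap_{\beta<\alpha}\{m:d(m,x_\beta)\geq\epsilon\}$ is therefore Borel (an arbitrary intersection of closed sets is closed, so no countability hypothesis is needed); hence $B_{x_\alpha}=V_\alpha\cup W_\alpha$ is Borel. Finally $(1)$ holds precisely because we removed the $V_\alpha$ first: $V_\alpha=B(x_\alpha,\epsilon/3)$ is a nonempty open subset contained in $B_{x_\alpha}$.

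The only genuinely non-obvious point — and where I would expect a careless argument to fail — is property $(1)$. A more naive scheme, such as Voronoi-type cells or ``$B(x_\alpha,\epsilon)$ minus all earlier balls'', yields $(2)$, $(3)$, $(4)$ and the covering with no effort, but can produce cells with empty interior: for $M=\mathbb{R}$ with the integer net, the cell of $0$ collapses to $\{0\}$. Reserving the open balls $V_\alpha$ at the outset removes this difficulty at essentially no cost to the remaining properties, which is why the lemma is indeed ``easy''.
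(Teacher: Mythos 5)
Your proof is correct. Note that the paper does not actually supply an argument for this lemma: it asserts that the statement follows easily from bounded geometry and refers to Willett--Yu's book (Appendix A), so there is no in-paper proof to compare against. Your construction is a complete, self-contained argument and it gets the one genuinely delicate point right: conditions (2)--(4) and the covering are automatic for any greedy partition subordinate to $\epsilon$-balls around a maximal $\epsilon$-separated set, but condition (1) can fail for the naive ``ball minus earlier cells'' scheme, and reserving the pairwise disjoint open balls $V_\alpha=B(x_\alpha,\epsilon/3)$ before distributing the closed remainder $W$ by the least-index rule fixes exactly that. The measurability bookkeeping is also handled correctly, including the possibly uncountable case, since $W_\alpha=W\cap B(x_\alpha,\epsilon)\cap\bigcap_{\beta<\alpha}\{m:d(m,x_\beta)\geq\epsilon\}$ is Borel because arbitrary intersections of closed sets are closed; and your maximal $\epsilon$-separated set is indeed a net in the paper's sense ($\epsilon$-separated and $\epsilon$-dense), with local finiteness available, as you note, by taking $\epsilon$ larger than the $\delta$ from bounded geometry, though it is not needed for (1)--(4).
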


Let $\mathcal{A}$ be a $C^*$-algebra. We first recall the definition of Roe algebras with coefficients in $\mathcal{A}$.
Fix a countable dense subset $M_0\subset M$ and a separable, infinite-dimensional Hilbert space $\mathcal{H}$. We denote the standard Hilbert $\mathcal{A}$-module by $\mathcal{H}_{\mathcal{A}}:=\mathcal{A}\otimes \mathcal{H}$. We consider a Hilbert $\mathcal{A}$-module
$\ell^2(M_0, \mathcal{H}_{\mathcal{A}})$
with the following structure:
$$
\left\langle\sum_{m\in M_0} a_m, \sum_{m\in M_0} b_x\right\rangle=\sum_{m\in M_0}\left\langle a_m,b_m\right \rangle_{\mathcal{H}_{\mathcal{A}}},
$$
$$
\left(\sum_{m\in M_0} a_m\right)\cdot a=\sum_{m\in M_0} (a_ma)
$$
for all $\sum_{m\in M_0} a_m, \sum_{m\in M_0} b_m\in \ell^2(M_0, \mathcal{H}_{\mathcal{A}})$ and $a\in \mathcal{A}$. Let $B(\ell^2(M_0, \mathcal{H}_{\mathcal{A}}))$ be the algebra of all adjointable $\mathcal{A}$-linear operators on $\ell^2(M_0, \mathcal{H}_{\mathcal{A}})$.

It is obvious that every bounded $\mathcal{A}$-linear operator $T \in B(\ell^2(M_0, \mathcal{H}_{\mathcal{A}}))$ can be expressed as an $(M_0 \times M_0)$-matrix $\left(T_{xy}\right)_{x,y\in M_0}$, where $T_{xy}\in \mathcal{A}\otimes B(\mathcal{H})$ for all $x, y\in M_0$. 

Now we are ready to recall some basic concepts in coarse geometry.

\begin{definition}
    Let $T=\left(T_{xy}\right)_{x,y\in M_0}$ be an adjointable operator on $\ell^2(M_0,\mathcal{H}_{\mathcal{A}}) $.
    \begin{enumerate}[(1)]
        \item The propagation of $T$ is defined to be 
        $$
        {\rm Propagation}(T):=\sup\{d(x,y):T_{x,y}\neq 0\}\in [0,\infty].
        $$
        We say that $T$ has finite propagation if ${\rm Propagation}(T)<\infty$.
        \item The operator $T$ is said to be locally compact, if for every bounded subset $B\subset M$, the operator $\left(T_{xy}\right)_{x,y\in B\cap M_0}$ is a compact operator on $\ell^2(B\cap M_0, \mathcal{H}_{\mathcal{A}})$.
    \end{enumerate}
\end{definition}
 The concept of Roe algebra was introduced by J. Roe. 
\begin{definition}
   Let $M$ be a metric space with bounded geometry, $M_0\subset M$ a countable dense subset, and $\mathcal{A}$ a $C^*$-algebra. The Roe algebra of $M$ with coefficients in $\mathcal{A}$ is the $C^*$-subalgebra $C^*(M,\mathcal{A})$ of $B(\ell^2(M_0, \mathcal{A}\otimes \mathcal{H}))$ generated by all locally compact operators with finite propagation. In particular, when $\mathcal{A}=\mathbb{C}$, $C^*(M,\mathbb{C})$ is denoted by $C^*(M)$.
\end{definition}

Next, we recall the definition of twisted Roe algebras. This notion was first introduced by the first-named author together with L.~Guo in \cite{DengGuo2024TwistedRoeAlgebras}, where the coefficient algebras were taken to be coarse algebras. In the present paper, we instead use stable coarse algebras to define twisted Roe algebras. With this modification, the resulting twisted Roe algebras are invariant under coarse equivalences in a certain sense (Theorem~\ref{coarse equivalence}).

We fix a net $X$ of $M$ and a Borel covering $\{B_x\}_{x\in X}$ satisfying the four conditions in Lemma~\ref{borel cover}.
For every $x\in X$, fix a unitary 
\begin{align}\label{identification of Hilbert spaces}
    U_x:\ell^2(B_x\cap M_0)\otimes\mathcal{H}\to \mathcal{H}
\end{align}
between Hilbert spaces. This induces an isomorphism on the Hilbert $A$-modules $U_x\otimes \text{id}_{\mathcal{A}}:\ell^2(B_x\cap M_0)\otimes\mathcal{H}_{\mathcal{A}}\to \mathcal{H}_{\mathcal{A}}$.

For every $T=\left(T_{xy}\right)_{x,y\in M_0}\in B(\ell^2(M_0, \mathcal{H}_{\mathcal{A}}))$ and every partial translation $v:D \to R$ on $X$, we define a bounded function
$$
T^v:X\to \mathcal{A}\otimes \mathcal{K}
$$
by 
\begin{align*}
    T^v(x):=\left(\text{id}_{\mathcal{A}}\otimes U_x\right) \chi_{B_x}T\chi_{B_{v(x)}} \left(\text{id}_{\mathcal{A}}\otimes U_{v(x)}\right)\in \mathcal{A}\otimes \mathcal{K}.
\end{align*}
Note that for a stable coarse $X$-algebra $\Gamma(X,\mathcal{A}\otimes \mathcal{K})$ and $T\in B(\ell^2(M_0, \mathcal{H}_{\mathcal{A}}))$, the condition that $T^v\in \Gamma(X,\mathcal{A}\otimes \mathcal{K})$ is independent of the choice of unitaries $\{U_x\}$ by the third condition in Definition~\ref{stable coarse algebra}. For brevity, we  omit $\text{id}_{\mathcal{A}}\otimes U_x$. 

\begin{definition}\label{definition of twisted roe algebra}
    Let $M$ be a metric space with bounded geometry, $X\subset M$ a net in $M$, and $\left\{B_x\right\}_{x\in X}$ a Borel cover satisfying the four conditions in Lemma~\ref{borel cover}. Let $\mathcal{A}$ be a $C^*$-algebra, and $\Gamma(X, \mathcal{A}\otimes \mathcal{K})$ a coarse $X$-algebra. 
    \begin{enumerate}[(1)]
    \item The algebraic $\Gamma(X, A\otimes \mathcal{K})$-twisted Roe algebra of $M$, denoted by $\mathbb{C}[M;\Gamma(X,\mathcal{A}\otimes \mathcal{K})]$, is the $*$-subalgebra of the algebraic Roe algebra $\mathbb{C}[M,\mathcal{A}\otimes \mathcal{K}]$ consisting of all operator $T$ satisfying that $T^v\in \Gamma(X, \mathcal{A}\otimes \mathcal{K})$ for all partial translation $v: D \to R$ on $X$.

    \item The $\Gamma(X,\mathcal{A}\otimes \mathcal{K})$-twisted Roe algebras $C^*(M;\Gamma(X,\mathcal{A}\otimes \mathcal{K}))$ is the completion of the $*$-algebra $\mathbb{C}[M;\Gamma(X,\mathcal{A}\otimes \mathcal{K})]$ under the operator norm on $\ell^2(M_0, \mathcal{H}_{\mathcal{A}})$.
    \end{enumerate}
\end{definition}

It will be shown in Lemma~\ref{independence of nets} that the definition of the $\Gamma(X,\mathcal{A}\otimes \mathcal{K})$-twisted Roe algebras are independent of the choice of a Borel cover $\{B_x\}$ satisfying the four conditions in Lemma~\ref{borel cover}.

\begin{example} Let $X$ be a metric space with bounded geometry and $\mathcal{A}$ a $C^*$-algebra. 
\begin{enumerate}[(1)]
    \item For the stable coarse $X$-algebra $\ell^{\infty}(X,\mathcal{A}\otimes\mathcal{K})$, we have that
    $$
    C^*(X, \ell^{\infty}(X, \mathcal{A}\otimes\mathcal{K}))=C^*(X,\mathcal{A}).
    $$
    \item For the stable coarse $X$-algebra $C_0(X, \mathcal{A}\otimes \mathcal{K})$, we have that
    $$
    C^*(X, C_0(X, \mathcal{K}))\cong \mathcal{A}\otimes\mathcal{K}.
    $$

    \item For a subspace $Y\subset X$ and the stable coarse $X$-algebra $\Gamma(X,\mathcal{A}\otimes\mathcal{K})$ defined in Example \ref{example coarse algebra} (3), we have that
    \begin{align*}
        C^*(X;\Gamma(X,\mathcal{A}\otimes\mathcal{K}))\cong C^*(Y,\mathcal{A}).
    \end{align*}
\end{enumerate}
    
\end{example}

In \cite{DengGuo2024TwistedRoeAlgebras}, the first-named author and L. Guo proved that the twisted coarse Baum--Connes conjecture satisfies the coarse imprimitivity. This can be used to compute the $K$-theory of twisted Roe algebras.

We now prove the coarse invariance. In Proposition~\ref{easiest coarse equivalence}, we assume that the coarse equivalence $f:M \to N$ restricts to a bijection between the fixed nets of $X$ and $Y$, and that it preserves the chosen Borel coverings. In this case, for a given a coarse $Y$-algebra $\Gamma(Y,\mathcal{A}\otimes \mathcal{K})$, we consider the coarse $X$-algebra
$$f^*(\Gamma(Y,\mathcal{A}\otimes \mathcal{K}))=\{
\varphi\circ f: \varphi \in  \Gamma(Y,\mathcal{A}\otimes \mathcal{K})
\}.$$
Then in this case, the proof is the same as that of coarse invariance Roe algebras, and it is not necessary to require that the coarse algebras are stable. In Theorem~\ref{coarse equivalence}, we prove a more general coarse invariance.

\begin{proposition}\label{easiest coarse equivalence}    Let $f:M\to N$ be a coarse equivalence between metric spaces with bounded geometry.  Fix a net $X$ in $M$ such that $f$ is one-to-one on $X$ and $Y:=f(X)$ is a net of $Y$, and fix Borel coverings $\{B_x\}_{x\in X}$ of $M$ and $\{C_y\}_{y\in Y}$ of $N$ satisfying the four conditions in Lemma~\ref{borel cover} and $f(B_x)\subset C_{f(x)}$ for all $x\in X$. For any coarse $Y$-algebra $\Gamma(Y,\mathcal{A}\otimes \mathcal{K})$ and the coarse $X$-algebra defined by $\Gamma(X,\mathcal{A}\otimes \mathcal{K}):=f^*(\Gamma(Y,\mathcal{A}\otimes \mathcal{K}))$, we have an isomorphism
    \begin{align*}
        C^*(M;\Gamma(X,\mathcal{A}\otimes \mathcal{K}))\cong C^*(N;\Gamma(Y,\mathcal{A}\otimes \mathcal{K})).
    \end{align*}
\end{proposition}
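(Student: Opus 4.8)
The plan is to construct an explicit $*$-isomorphism on the level of algebraic twisted Roe algebras and then extend it to the completions, mimicking the standard proof that ordinary Roe algebras are coarse invariants but keeping track of the coarse-algebra coefficients via the identification $\Gamma(X,\mathcal{A}\otimes\mathcal{K})=f^*(\Gamma(Y,\mathcal{A}\otimes\mathcal{K}))$. First I would fix countable dense subsets $M_0\subset M$ and $N_0\subset N$, and since $f$ is a coarse equivalence one may arrange (after a small perturbation that changes nothing up to the equivalences involved) a bijection $\bar f\colon M_0\to N_0$ lying at bounded distance from $f$ and compatible with the Borel coverings in the sense that $\bar f(B_x\cap M_0)\subset C_{f(x)}\cap N_0$. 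This $\bar f$ induces a unitary $V\colon \ell^2(M_0,\mathcal{H}_{\mathcal{A}})\to\ell^2(N_0,\mathcal{H}_{\mathcal{A}})$ (after choosing, for each $x$, a unitary $\ell^2(B_x\cap M_0)\otimes\mathcal{H}\to\ell^2(C_{f(x)}\cap N_0)\otimes\mathcal{H}$; existence uses separability and infinite-dimensionality of $\mathcal{H}$, together with bounded geometry so that the cardinalities involved are controlled). Conjugation $T\mapsto VTV^*$ is the candidate isomorphism.

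Next I would verify that $\mathrm{Ad}(V)$ maps $\mathbb{C}[M;\Gamma(X,\mathcal{A}\otimes\mathcal{K})]$ into $\mathbb{C}[N;\Gamma(Y,\mathcal{A}\otimes\mathcal{K})]$. That $\mathrm{Ad}(V)$ carries locally compact finite-propagation operators to locally compact finite-propagation operators is exactly the classical computation, using that $\bar f$ distorts distances by a controlled amount (coarse equivalence) and is proper. The new point is the coefficient condition: given $S=VTV^*$ and a partial translation $w\colon D\to R$ on $Y$, one must show $S^w\in\Gamma(Y,\mathcal{A}\otimes\mathcal{K})$. Since $f$ is a bijection $X\to Y$, the map $v:=f^{-1}\circ w\circ f$ is a partial translation on $X$, and a direct matrix computation — unwinding the definitions of $T^v(x)$ and $S^w(y)$ and using $f(B_x)\subset C_{f(x)}$ together with the compatibility of the chosen block unitaries $U_x,\,U'_{f(x)}$ with $\bar f$ — shows that $S^w(f(x))$ and $T^v(x)$ agree after conjugation by a fixed family of unitaries in $\ell^\infty(Y,B(\mathcal{H}))$. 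Hence $S^w = (T^v)\circ f^{-1}$ up to such a conjugation; since $T^v\in\Gamma(X,\mathcal{A}\otimes\mathcal{K})=f^*(\Gamma(Y,\mathcal{A}\otimes\mathcal{K}))$, we get $(T^v)\circ f^{-1}\in\Gamma(Y,\mathcal{A}\otimes\mathcal{K})$, and — here is the only place stability of $\Gamma$ is invoked — Condition (3) in Definition~\ref{stable coarse algebra} absorbs the bilateral multiplication by the $B(\mathcal{H})$-valued unitaries, giving $S^w\in\Gamma(Y,\mathcal{A}\otimes\mathcal{K})$. (Actually, since in this proposition we assumed $f$ preserves nets and coverings, the unitary discrepancy can be arranged away entirely, so stability is not even needed here — this is the remark already made in the text before the statement; I would point that out.) Running the same argument for $V^*=f^{-1}$ in place of $V$ shows $\mathrm{Ad}(V^*)$ maps $\mathbb{C}[N;\Gamma(Y,\mathcal{A}\otimes\mathcal{K})]$ into $\mathbb{C}[M;\Gamma(X,\mathcal{A}\otimes\mathcal{K})]$, so $\mathrm{Ad}(V)$ is a $*$-isomorphism between the algebraic twisted Roe algebras.

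Finally, $\mathrm{Ad}(V)$ is isometric for the operator norms on $\ell^2(M_0,\mathcal{H}_{\mathcal{A}})$ and $\ell^2(N_0,\mathcal{H}_{\mathcal{A}})$ because $V$ is a unitary of Hilbert $\mathcal{A}$-modules, so it extends to an isomorphism of the completions $C^*(M;\Gamma(X,\mathcal{A}\otimes\mathcal{K}))\cong C^*(N;\Gamma(Y,\mathcal{A}\otimes\mathcal{K}))$, as desired. The main obstacle is purely bookkeeping: matching up, simultaneously and coherently, the two systems of block unitaries $\{U_x\}$ on $M$ and $\{U'_y\}$ on $N$ with the bijection $\bar f$ so that the identity $S^w=(T^v)\circ f^{-1}$ holds on the nose rather than merely up to a $B(\mathcal{H})$-conjugation; once the hypotheses "$f$ one-to-one on $X$", "$Y=f(X)$ a net", and "$f(B_x)\subset C_{f(x)}$" are used, this coherence is available and the proof reduces to the classical argument plus the observation that $f^*$ by construction relates the two coefficient algebras. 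I expect the argument for the more general Theorem~\ref{coarse equivalence} to require exactly the conjugation trick above, and that is where Conditions (3) and (4) of stability genuinely enter.
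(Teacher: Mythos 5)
Your proposal is correct and follows essentially the same route as the paper: conjugation by a block-diagonal unitary adapted to the Borel partitions $\{B_x\}$ and $\{C_{f(x)}\}$ (the paper takes $W_x:=V_{f(x)}^*U_x$ so the block entries match exactly and no $B(\mathcal{H})$-conjugation discrepancy arises), which transports $T^v$ to $(WTW^*)^{f\circ v\circ f^{-1}}$ and hence the twisted condition under $\Gamma(X,\mathcal{A}\otimes\mathcal{K})=f^*(\Gamma(Y,\mathcal{A}\otimes\mathcal{K}))$. Your correct observation that stability is not needed here once the block unitaries are chosen coherently is exactly the paper's point (and is in fact required, since the proposition assumes only a coarse $Y$-algebra); the preliminary detour through a bijection $\bar f:M_0\to N_0$ is unnecessary but harmless.
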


\begin{proof}
Let $U_x:\ell^2(B_x\cap M_0,\mathcal{H})\to \mathcal{H}$ and $V_y:\ell^2(C_y\cap N_0,\mathcal{H})\to \mathcal{H}$ be the fixed unitaries in \eqref{identification of Hilbert spaces} between Hilbert spaces for all $x\in X$ and $y\in Y$.
    For each $x\in X$, we denote $W_x:=V_{f(x)}^{*}U_x:\ell^2(B_x\cap M_0,\mathcal{H})\to \ell^2(C_{f(x)}\cap N_0,\mathcal{H})$ and define 
    \begin{align*}
        W:=\bigoplus_x (W_x\otimes \text{id}_{\mathcal{A}}):\ell^2( M_0,\mathcal{H}_{\mathcal{A}})\to \ell^2(N_0,\mathcal{H}_{\mathcal{A}}).
    \end{align*}
    One can check that for any $T\in B(\ell^2(M_0,\mathcal{H}_{\mathcal{A}}))$ and $x,y\in X$, we have
    \begin{align*}
        \left(V_{f(x)}\otimes \text{id}_{\mathcal{A}}\right)\chi_{C_{f(x)}}\left(WTW^*\right)&\chi_{C_{f(y)}}\left(V_{f(y)}^*\otimes \text{id}_{\mathcal{A}}\right)\\
        &=\left(V_{f(x)}\otimes \text{id}_{\mathcal{A}}\right)\left(\chi_{B_x}T\chi_{B_y}\right)\left(V_{f(y)}^*\otimes \text{id}_{\mathcal{A}}\right).
    \end{align*}
    Then it is routine to check $WC^*(M;\Gamma(X,\mathcal{A}\otimes \mathcal{K}))W^*=C^*(N;\Gamma(Y,\mathcal{A}\otimes \mathcal{K}))$.
\end{proof}

Next, we show that twisted Roe algebras with stable coarse algebras are independent of the choice of nets and Borel coverings.

We begin by explaining how a coarse equivalence \(f : X \to Y\) induces, via pullback, a stable coarse \(X\)-algebra \(\Gamma(X, \mathcal{A} \otimes \mathcal{K})\) from a stable coarse \(Y\)-algebra \(\Gamma(Y, \mathcal{A} \otimes \mathcal{K})\).
Then, we use this to formulate the independence of the choice of nets and coarse invariance of twisted Roe algebras. Here it is crucial to assume that coarse algebras are stable (Definition~\ref{stable coarse algebra}). Let $X$ and $Y$ be discrete metric spaces with bounded geometry, $f:X\to Y$ a coarse equivalent map, and $\Gamma(Y,\mathcal{A}\otimes \mathcal{K})$ a stable coarse $Y$-algebra. For $\varphi\in \ell^{\infty}(X,\mathcal{A}\otimes \mathcal{K})$ and any subset $\Tilde{X}$ of representatives $\{x_y:x_y\in f^{-1}(y),~ y\in f(X)\}$ from each preimage $f^{-1}(y)$, define $\varphi_{\Tilde{X}}\in \ell^{\infty}(Y,\mathcal{A}\otimes \mathcal{K})$ by
\begin{align*}
    \varphi_{\Tilde{X}}(y)=\left\{
    \begin{array}{cc}
        \varphi(x), & y=f(x), x\in \Tilde{X},   \\
         0, & \text{otherwise.}
    \end{array}
    \right.
\end{align*}
Define 
\begin{align}\label{twist for coarse equi}
\Gamma(X,\mathcal{A}\otimes \mathcal{K})
  := \left\{\, \varphi\in \ell^{\infty}(X,\mathcal{A}\otimes \mathcal{K}) :
     \begin{array}{l}
       \varphi_{\Tilde{X}}\in \Gamma(Y,\mathcal{A}\otimes \mathcal{K}) 
       \text{ for all subset }\\  \Tilde{X}  \text{ of representatives of preimages}
     \end{array}
   \right\}.
\end{align}

\begin{lemma}
    The set $\Gamma(X,\mathcal{A}\otimes \mathcal{K})$ is a stable coarse $X$-algebra.
\end{lemma}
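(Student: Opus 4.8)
The plan is to verify the four defining conditions of a stable coarse $X$-algebra for the set $\Gamma(X,\mathcal{A}\otimes\mathcal{K})$ defined in \eqref{twist for coarse equi}, using the corresponding conditions for $\Gamma(Y,\mathcal{A}\otimes\mathcal{K})$. The key observation is that the operations defining conditions (1)--(4) are pointwise in $X$ in the sense that they commute with the ``push-forward along a choice of representatives'' operation $\varphi\mapsto\varphi_{\tilde X}$. First I would record that $\Gamma(X,\mathcal{A}\otimes\mathcal{K})$ is a $C^*$-subalgebra of $\ell^\infty(X,\mathcal{A}\otimes\mathcal{K})$: for a fixed $\tilde X$, the map $\varphi\mapsto\varphi_{\tilde X}$ is a $*$-homomorphism $\ell^\infty(X,\mathcal{A}\otimes\mathcal{K})\to\ell^\infty(Y,\mathcal{A}\otimes\mathcal{K})$ (its image is the functions supported on $f(X)$, and on that range it is an isometric $*$-isomorphism onto $\ell^\infty(\tilde X,\mathcal{A}\otimes\mathcal{K})$ followed by the obvious inclusion), so $\Gamma(X,\mathcal{A}\otimes\mathcal{K})$ is the intersection over all $\tilde X$ of the preimages of the $C^*$-algebra $\Gamma(Y,\mathcal{A}\otimes\mathcal{K})$, hence itself a $C^*$-algebra.

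For condition (1), fix $x\in X$ and $a\in\mathcal{A}\otimes\mathcal{K}$; choosing any $\tilde X$ with $x\in\tilde X$ and any $g\in\Gamma(Y,\mathcal{A}\otimes\mathcal{K})$ with $g(f(x))$ close to $a$, one can lift $g$ to some $\varphi\in\Gamma(X,\mathcal{A}\otimes\mathcal{K})$ with $\varphi(x)=g(f(x))$: indeed define $\varphi$ to be supported on a single representative of each fiber (the one in $\tilde X$) with value $g(f(\cdot))$ there; for any other representative set $\tilde X'$ the function $\varphi_{\tilde X'}$ is supported only on the fibers where $\tilde X$ and $\tilde X'$ agree, so $\varphi_{\tilde X'}$ is obtained from $g$ by multiplying by the characteristic function of that set of points in $Y$, which lands in $\Gamma(Y,\mathcal{A}\otimes\mathcal{K})$ by condition (3) applied with suitable $a,b\in\ell^\infty(Y,B(\mathcal{H}))$ (taking $a(y)=b(y)$ to be a rank-one or identity-type projection to realize multiplication by a scalar indicator). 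This already shows the density condition. For condition (2), given a partial translation $v:D\to R$ on $X$ and $\varphi\in\Gamma(X,\mathcal{A}\otimes\mathcal{K})$, one checks that for each representative set $\tilde X$ of $Y$ the function $(v^*\varphi)_{\tilde X}$ agrees, up to precomposition with a partial translation on $Y$ and multiplication by indicators, with some $\varphi_{\tilde X'}$ for a possibly different representative set $\tilde X'$ --- concretely, $v$ together with $f$ induces a finite-distance relation between points of $Y$, which decomposes into finitely many partial translations on $Y$ since $Y$ has bounded geometry, and conditions (2) and (3) for $\Gamma(Y,\mathcal{A}\otimes\mathcal{K})$ then give the claim. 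Conditions (3) and (4) for $\Gamma(X,\mathcal{A}\otimes\mathcal{K})$ are the most transparent: the operations $f\mapsto (\mathrm{id}_\mathcal{A}\otimes a(\cdot))f(\cdot)(\mathrm{id}_\mathcal{A}\otimes b(\cdot))$ and $f\mapsto\psi(f(\cdot)\otimes k)$ are applied pointwise, so they commute with $\varphi\mapsto\varphi_{\tilde X}$ on the nose once one transports the sequences $a,b$ (resp.\ uses the fixed $k$) from $X$ to $Y$ via the representatives, and then one invokes conditions (3), (4) for $\Gamma(Y,\mathcal{A}\otimes\mathcal{K})$.

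I expect the main obstacle to be condition (2), the invariance under partial translations on $X$. The subtlety is that a partial translation $v$ on $X$ need not descend to a single partial translation on $Y$: two points $x,x'\in D$ with $f(x)=f(x')$ may have $f(v(x))\neq f(v(x'))$, and conversely the ``graph'' of $v$ pushed to $Y\times Y$ is only a bounded-distance relation, not a bijection. The way around this is to use bounded geometry of $Y$ to split this relation into a bounded (independent of $v$) number of genuine partial translations on $Y$, and to handle the multiplicities coming from non-injectivity of $f$ by restricting to representative sets and using condition (3) to kill or relabel the extra points; keeping careful track of which representative set $\tilde X'$ one lands in after applying $v$ is the bookkeeping that requires care. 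Everything else --- the $C^*$-algebra structure, conditions (1), (3), (4) --- follows by routine pointwise arguments and the fact that $\varphi\mapsto\varphi_{\tilde X}$ is a $*$-homomorphism for each fixed $\tilde X$.
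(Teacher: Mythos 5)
Your proposal is correct and takes essentially the same route as the paper: the heart of the matter is exactly the paper's step of decomposing the pushed-forward map $y\mapsto f(v(x_y))$ into finitely many genuine partial translations on $Y$ (finiteness coming from bounded geometry), using auxiliary representative sets of the form $\{v(x_y)\}$, while the $C^*$-algebra structure and the stability conditions (3)--(4) pass through the maps $\varphi\mapsto\varphi_{\tilde X}$ pointwise. You are in fact slightly more careful than the paper, which does not spell out condition (1); your verification of it via condition (3) (cutting by indicator functions through $a(y)=\chi_E(y)\,\mathrm{id}_{\mathcal{H}}$) is valid since $\Gamma(Y,\mathcal{A}\otimes\mathcal{K})$ is assumed stable.
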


\begin{proof}
    It is obvious that $\Gamma(X,\mathcal{A}\otimes \mathcal{K})$ is a $C^*$-subalgebra of $\ell^{\infty}(X,\mathcal{A}\otimes \mathcal{K})$. For any partial translation $v$ on $X$ and any $\varphi\in \Gamma(X,\mathcal{A}\otimes \mathcal{K})$, we show that $v^*\varphi \in \Gamma(X,\mathcal{A}\otimes \mathcal{K})$.
    For any subset $\Tilde{Y}\subset f(X)$, we fix a set of representatives 
    \begin{align*}
        \Tilde{X}:=\{x_y:x_y\in f^{-1}(y), y\in \Tilde{Y}\}
    \end{align*}
    from each preimage $f^{-1}(y)$ for every $y\in \Tilde{Y}$. Then for any $y\in \Tilde{Y}$ such that $x_y\in \dom(v)$,
    \begin{align*}
        (v^*\varphi)_{\Tilde{X}}(y)=(v^*\varphi)(x_y)=\varphi(v(x_y))
    \end{align*}
    and $(v^*\varphi)_{\Tilde{X}}(y)=0$ if $y\notin \Tilde{Y}$ or $x_y\notin \dom(v)$.
    
    Decompose $\{y\in \Tilde{Y}:x_y\in \dom(v)\}=\bigsqcup_{j=1}^N A_j$ such that for any $j=1,2,\cdots, N$ and for any distinct points $y,y'\in A_j$, we have that $f(v(x_y))\neq f(v(x_{y'}))$. Then we can define a partial translations $v_j$ on $Y$ such that $\dom(v_j)= A_j$ and $v_j(y)=f(v(x_y))$. Define a set 
    \begin{align*}
        \Tilde{X}_j:=\{v(x_y):y\in A_j\},
    \end{align*}
  which contains at most one point from each preimage $f^{-1}(y)$. Then for any $y\in A_j$,
  \begin{align*}
      v_j^*(\varphi_{\Tilde{X}_j})(y)=\varphi_{\Tilde{X}_j}(v_j(y))=\varphi_{\Tilde{X}_j}(f(v(x_y)))=\varphi(v(x_y))
  \end{align*}
  and $v_j^*(\varphi_{\Tilde{X}_j})(y)=0$ if $y\notin A_j$. Therefore $v^*\varphi=\sum_{j=1}^N v_j^*(\varphi_{\Tilde{X}_j})\in \Gamma(X,\mathcal{A}\otimes \mathcal{K})$. It is easy to see $\Gamma(X,\mathcal{A}\otimes \mathcal{K})$ is stable if $\Gamma(Y,\mathcal{A}\otimes \mathcal{K})$ is.
\end{proof}

By the same technique of decomposition as above, we have the following. 
\begin{lemma}\label{well-defined up to close}
Let $X$ and $Y$ be metric spaces with bounded geometry, and $\Gamma(Y, \mathcal{A}\otimes \mathcal{K})$ a stable coarse $Y$-algebra.  If $f,g:X\to Y$ are coarse equivariant maps which are close in the sense that $$\sup_{x\in X} d(f(x),g(x))<\infty,$$ 
then $f$ and $g$ induce the same coarse $X$-algebra $\Gamma(X,\mathcal{A}\otimes \mathcal{K})$ by the formula \eqref{twist for coarse equi}.
\end{lemma}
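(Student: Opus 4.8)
The plan is to show directly that the two stable coarse $X$-algebras produced from $\Gamma(Y,\mathcal{A}\otimes\mathcal{K})$ by formula \eqref{twist for coarse equi}, one via $f$ and one via $g$, coincide; write them $\Gamma_f(X,\mathcal{A}\otimes\mathcal{K})$ and $\Gamma_g(X,\mathcal{A}\otimes\mathcal{K})$. Since the hypothesis is symmetric in $f$ and $g$ it suffices to prove $\Gamma_f(X,\mathcal{A}\otimes\mathcal{K})\subseteq\Gamma_g(X,\mathcal{A}\otimes\mathcal{K})$. So fix $\varphi\in\Gamma_f(X,\mathcal{A}\otimes\mathcal{K})$ and a subset $\tilde X\subset X$ meeting each fiber of $g$ in at most one point; writing $\varphi^{g}_{\tilde X}\in\ell^\infty(Y,\mathcal{A}\otimes\mathcal{K})$ for the function with $\varphi^g_{\tilde X}(g(x))=\varphi(x)$ for $x\in\tilde X$ and $\varphi^g_{\tilde X}(y)=0$ otherwise, the goal is to show $\varphi^g_{\tilde X}\in\Gamma(Y,\mathcal{A}\otimes\mathcal{K})$.

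Let $R:=\sup_{x\in X}d(f(x),g(x))<\infty$. The first step is a decomposition in the same spirit as the proof just above. The map $g$ is injective on $\tilde X$, but $f$ need not be; however, if $x\neq x'$ in $\tilde X$ satisfy $f(x)=f(x')$, then $g(x)\neq g(x')$ while $d(g(x),g(x'))\le 2R$, so all the $g$-images of the points of $\tilde X$ lying in one fixed fiber of $f$ sit in a common ball of radius $2R$ in $Y$ and are pairwise distinct. Since $Y$ has bounded geometry, such a ball contains at most $L=L(R)$ points, a bound independent of $\tilde X$. Hence $f$ is at most $L$-to-one on $\tilde X$, and by coloring the points of $\tilde X$ so that within each fiber of $f$ the colors are distinct we may write $\tilde X=\bigsqcup_{j=1}^{L}\tilde X_j$ with $f$ injective on each $\tilde X_j$.

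Fix $j$. The set $\tilde X_j$ meets each fiber of $f$ at most once, so $\varphi^{f}_{\tilde X_j}$ lies in $\Gamma(Y,\mathcal{A}\otimes\mathcal{K})$ directly from $\varphi\in\Gamma_f(X,\mathcal{A}\otimes\mathcal{K})$ (or, if \eqref{twist for coarse equi} is read as quantifying only over full transversals, by extending $\tilde X_j$ to one and cutting the support back to $f(\tilde X_j)$ with the identity partial translation on $f(\tilde X_j)$, which is legitimate by axiom (2) of Definition~\ref{stable coarse algebra}). Now $\beta_j\colon f(\tilde X_j)\to g(\tilde X_j)$, $\beta_j(f(x)):=g(x)$, is a bijection with $\sup_{y}d(y,\beta_j(y))\le R$, hence a partial translation on $Y$, and a direct computation gives $\varphi^{g}_{\tilde X_j}=(\beta_j^{-1})^{*}\varphi^{f}_{\tilde X_j}$; invariance of $\Gamma(Y,\mathcal{A}\otimes\mathcal{K})$ under pullback by partial translations then yields $\varphi^g_{\tilde X_j}\in\Gamma(Y,\mathcal{A}\otimes\mathcal{K})$. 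Finally, since $g$ is injective on $\tilde X$ the sets $g(\tilde X_j)$ are pairwise disjoint, so $\varphi^g_{\tilde X}=\sum_{j=1}^{L}\varphi^g_{\tilde X_j}$ is a finite sum of elements of the $C^*$-algebra $\Gamma(Y,\mathcal{A}\otimes\mathcal{K})$ and hence belongs to it; interchanging $f$ and $g$ gives the reverse inclusion. The only step requiring genuine care is obtaining the \emph{uniform} bound $L$ in the decomposition — precisely where bounded geometry of $Y$ is used — after which the argument is routine bookkeeping with partial translations.
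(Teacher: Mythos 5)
Your proof is correct and is essentially the argument the paper intends: the paper only says ``by the same technique of decomposition as above,'' meaning one splits the index set into finitely many pieces on which the relevant map is injective and transports by partial translations on $Y$, which is exactly your decomposition $\tilde X=\bigsqcup_{j=1}^{L}\tilde X_j$ together with the partial translations $\beta_j$ and the identity $\varphi^g_{\tilde X}=\sum_j(\beta_j^{-1})^*\varphi^f_{\tilde X_j}$. Your extraction of the uniform bound $L$ from bounded geometry and your handling of both readings of \eqref{twist for coarse equi} simply make explicit details the paper leaves implicit.
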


\begin{remark}\label{functoriarity}
    For two coarse equivalences $f:X \to Y$ and $g:Y\to Z$ between discrete metric spaces with bounded geometry. Assume $\Gamma(Z,\mathcal{A}\otimes\mathcal{K})$ is a coarse $Z$-algebra and $\Gamma(Y,\mathcal{A}\otimes\mathcal{K})$ is the coarse $Y$-algebra induced by $g$ via \eqref{twist for coarse equi}. Denote by $\Gamma_1(X,\mathcal{A}\otimes\mathcal{K})$ the coarse $X$-algebra induced by $f$ from $\Gamma(Y,\mathcal{A}\otimes\mathcal{K})$ and denote by $\Gamma_2(X,\mathcal{A}\otimes\mathcal{K})$ the coarse $X$-algebra induced by $g\circ f$ from $\Gamma(Z,\mathcal{A}\otimes\mathcal{K})$. Then it is easy to see that 
    \begin{align*}
        \Gamma_1(X,\mathcal{A}\otimes\mathcal{K})=\Gamma_2(X,\mathcal{A}\otimes\mathcal{K}).
    \end{align*}
\end{remark}

Now, we are ready to show that the definition of twisted Roe algebras is independent of the choice of nets and Borel coverings.
\begin{lemma}\label{independence of nets}
    Let $M$ be a metric space with bounded geometry, and let $X$ and $Y$ be nets of $M$ with associated Borel covers $\{B_x\}_{x\in X}$ and $\{C_y\}_{y\in Y}$ of $M$ satisfying the four conditions in Lemma~\ref{borel cover}. Suppose that $\Gamma(Y,\mathcal{A}\otimes \mathcal{K})$ is a stable coarse $Y$-algebra and $\Gamma(X,\mathcal{A}\otimes \mathcal{K})$ is a coarse $X$-algebra induced by any coarse equivalence between $X$ and $Y$, which is close to the identity map on $M$, by the formula \eqref{twist for coarse equi}. Denote by $C^*(M;\Gamma(X,\mathcal{A}\otimes \mathcal{K}))$ (resp. $C^*(M;\Gamma(Y,\mathcal{A}\otimes \mathcal{K}))$) the $\Gamma(X,\mathcal{A}\otimes \mathcal{K})$-twisted (resp. $\Gamma(Y,\mathcal{A}\otimes \mathcal{K})$-twisted) Roe algebra associated to the Borel cover $\{B_x\}_{x\in X}$ (resp. $\{C_y\}_{y\in Y}$). Then we have an equality 
    \begin{align*}
        C^*(M;\Gamma(X,\mathcal{A}\otimes \mathcal{K}))=C^*(M;\Gamma(Y,\mathcal{A}\otimes \mathcal{K}))
    \end{align*}
    in $B(\ell^2(M_0,\mathcal{H}_{\mathcal{A}}))$.
\end{lemma}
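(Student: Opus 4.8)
The plan is to prove the stronger claim that the \emph{algebraic} twisted Roe algebras coincide, $\mathbb{C}[M;\Gamma(X,\mathcal{A}\otimes\mathcal{K})]=\mathbb{C}[M;\Gamma(Y,\mathcal{A}\otimes\mathcal{K})]$ as $*$-subalgebras of $B(\ell^2(M_0,\mathcal{H}_{\mathcal{A}}))$; the asserted equality of the completions then follows by closing up. I begin with a symmetry reduction. By Lemma~\ref{well-defined up to close}, $\Gamma(X,\mathcal{A}\otimes\mathcal{K})$ does not depend on which coarse equivalence $X\to Y$ close to the identity is used to define it (any two are close to one another, each being close to the inclusion $X\hookrightarrow M$), so fix one such map $f\colon X\to Y$. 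Choosing similarly a coarse equivalence $g\colon Y\to X$ close to the identity, Remark~\ref{functoriarity} together with Lemma~\ref{well-defined up to close} identify the coarse $Y$-algebra induced from $\Gamma(X,\mathcal{A}\otimes\mathcal{K})$ via $g$ with the one induced from $\Gamma(Y,\mathcal{A}\otimes\mathcal{K})$ via $f\circ g$; since $f\circ g$ is close to $\mathrm{id}_Y$ and formula~\eqref{twist for coarse equi} applied to $\mathrm{id}_Y$ returns the original algebra, this is just $\Gamma(Y,\mathcal{A}\otimes\mathcal{K})$. Hence the hypotheses are symmetric in $X$ and $Y$ (recall that $\Gamma(X,\mathcal{A}\otimes\mathcal{K})$ is itself a stable coarse algebra), and it suffices to prove one inclusion, say $\mathbb{C}[M;\Gamma(X,\ldots)]\subseteq\mathbb{C}[M;\Gamma(Y,\ldots)]$.

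So fix $T\in\mathbb{C}[M;\Gamma(X,\ldots)]$. It is locally compact with finite propagation, and $T^v\in\Gamma(X,\ldots)$ for every partial translation $v$ on $X$; I must show $T^w\in\Gamma(Y,\ldots)$ for every partial translation $w$ on $Y$. Using that $\{B_x\}_{x\in X}$ partitions $M$, a direct computation gives, for $y\in\dom(w)$,
\[
T^w(y)=\sum_{x,x'}\bigl(V_y\chi_{C_y\cap B_x}U_x^{*}\bigr)\bigl(U_x\chi_{B_x}T\chi_{B_{x'}}U_{x'}^{*}\bigr)\bigl(V_{w(y)}\chi_{C_{w(y)}\cap B_{x'}}U_{x'}^{*}\bigr)^{*},
\]
where the middle factor is exactly $T^{v}(x)$ for the one-point partial translation $v\colon x\mapsto x'$ on $X$. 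The finite propagation of $T$, the uniform bound on the diameters of the cells (Lemma~\ref{borel cover}), and the bounded geometry of $M$ guarantee that for each $y$ this is a sum over a uniformly bounded set of pairs $(x,x')$, all lying in a fixed bounded neighbourhood of $y$.

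Next I would run the familiar bounded-geometry decomposition: split the uniformly locally finite set of triples $(y,x,x')$ occurring above into finitely many classes $\mathcal{S}_1,\dots,\mathcal{S}_N$, with $N$ depending only on the geometric constants and the propagations of $T$ and $w$, such that on each $\mathcal{S}_k$ the assignments $y\mapsto x$ and $y\mapsto x'$ are partial translations $\alpha_k,\beta_k$ from $Y$ to $X$ (automatically close to the inclusion), and such that $v_k:=\beta_k\circ\alpha_k^{-1}$ is a partial translation on $X$. Then $T^{v_k}\in\Gamma(X,\ldots)$, and, since $v_k(\alpha_k(y))=\beta_k(y)$, the contribution of $\mathcal{S}_k$ to $T^w$ is the function
\[
y\longmapsto \bigl(\mathrm{id}_{\mathcal{A}}\otimes a_k(y)\bigr)\,T^{v_k}\!\bigl(\alpha_k(y)\bigr)\,\bigl(\mathrm{id}_{\mathcal{A}}\otimes c_k(y)\bigr),
\]
where $a_k(y)=V_y\chi_{C_y\cap B_{\alpha_k(y)}}U_{\alpha_k(y)}^{*}$ and $c_k(y)=U_{\beta_k(y)}\chi_{C_{w(y)}\cap B_{\beta_k(y)}}V_{w(y)}^{*}$ define bounded functions in $\ell^\infty(Y,B(\mathcal{H}))$. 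By the stability axiom~(3) of Definition~\ref{stable coarse algebra}, it therefore suffices to show that $y\mapsto T^{v_k}(\alpha_k(y))$ lies in $\Gamma(Y,\ldots)$; summing the finitely many contributions then gives $T^w\in\Gamma(Y,\ldots)$.

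This last point is the conceptual core, and, together with the combinatorial bookkeeping above, is where I expect the real work to lie. Write $\varphi:=T^{v_k}\in\Gamma(X,\ldots)$ and $\alpha:=\alpha_k$. Because $f$ and $\alpha$ are each close to the inclusion, $f\circ\alpha$ is close to $\mathrm{id}_Y$, hence boundedly-to-one, so its domain splits into finitely many sets $D_j$ on each of which $f\circ\alpha$ is injective, giving partial translations $w_j:=f\circ\alpha|_{D_j}$ on $Y$. For each $j$, choose a set of representatives $\widetilde{X}_j$ of the fibres of $f$ that selects $\alpha(y)\in f^{-1}\!\bigl(f(\alpha(y))\bigr)$ for every $y\in D_j$; this is consistent because $\alpha$ is injective and $w_j=f\circ\alpha$ is injective on $D_j$, so the selected points lie in distinct fibres. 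By the definition~\eqref{twist for coarse equi} of $\Gamma(X,\ldots)$, the function $\varphi_{\widetilde{X}_j}$ lies in $\Gamma(Y,\ldots)$, and one checks directly that $w_j^{*}\bigl(\varphi_{\widetilde{X}_j}\bigr)(y)=\varphi(\alpha(y))$ for $y\in D_j$ and $0$ otherwise. Since $\Gamma(Y,\ldots)$ is invariant under pullback by partial translations (axiom~(2)), $w_j^{*}(\varphi_{\widetilde{X}_j})\in\Gamma(Y,\ldots)$; summing over $j$ yields $y\mapsto\varphi(\alpha(y))\in\Gamma(Y,\ldots)$, completing this inclusion. By the symmetry established at the outset the reverse inclusion holds as well, and passing to completions proves the lemma.
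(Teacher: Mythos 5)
Your proposal is correct, but it is essentially the mirror image of the paper's proof rather than a copy of it. Both arguments begin with the same symmetry reduction (the pullback algebra is stable, and by Lemma~\ref{well-defined up to close} and Remark~\ref{functoriarity} the algebra induced from $\Gamma(X,\mathcal{A}\otimes\mathcal{K})$ along a coarse equivalence $Y\to X$ close to the identity is again $\Gamma(Y,\mathcal{A}\otimes\mathcal{K})$), so only one inclusion of the algebraic twisted Roe algebras needs a direct proof; you and the paper prove opposite ones. The paper takes $T\in\mathbb{C}[M;\Gamma(Y,\mathcal{A}\otimes\mathcal{K})]$ and verifies the defining pushforward condition of the pullback algebra: it aggregates the cells $C_y$ meeting $B_x$ into $D_x$, cuts back down using axiom (3) of Definition~\ref{stable coarse algebra}, treats $\chi_{D_x}T\chi_{D_{v(x)}}$ as a uniformly sized matrix over $\mathcal{A}\otimes\mathcal{K}$ via axiom (4), and then writes $\varphi_{\tilde{X}}=\sum_k u_k^*(T^{w_k\circ u_k^{-1}})$ for partial translations $u_k,w_k$ on $Y$. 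You instead take $T\in\mathbb{C}[M;\Gamma(X,\mathcal{A}\otimes\mathcal{K})]$, expand $T^w(y)$ over the partition $\{B_x\}$, color the triples $(y,x,x')$ into finitely many partial injections $\alpha_k,\beta_k$, strip the corner contractions with axiom (3), and then exploit the definition \eqref{twist for coarse equi} with the specific representative sets $\tilde{X}_j=\{\alpha(y):y\in D_j\}$, finishing with axiom (2). The trade-off: your direct step never invokes the matrix axiom (4) (it enters only implicitly, through stability of the pullback needed for the symmetry swap), and it makes the role of the representative sets in \eqref{twist for coarse equi} very explicit, whereas the paper's cell-aggregation-plus-matrix trick avoids your coloring of triples. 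Two points you state without proof are fine but deserve a line each: that \eqref{twist for coarse equi} applied to $\mathrm{id}_Y$ returns $\Gamma(Y,\mathcal{A}\otimes\mathcal{K})$ itself uses axiom (2) (restriction to a subset is pullback by an identity partial translation), and the existence of your finite coloring is the standard bounded-degree conflict-graph argument coming from bounded geometry, the finite propagation of $T$, and the uniform diameters in Lemma~\ref{borel cover}.
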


\begin{proof}
We fix a coarse equivalence $f:X\to Y$, which is close to the identity on $M$.
    By Remark~\ref{functoriarity}, it suffices to show 
    \begin{align*}
        \mathbb{C}[M;\Gamma(Y,\mathcal{A}\otimes \mathcal{K})]\subset \mathbb{C}[M;\Gamma(X,\mathcal{A}\otimes \mathcal{K})].
    \end{align*}
    Let $T\in \mathbb{C}[M;\Gamma(Y,\mathcal{A}\otimes \mathcal{K})]$ and $v$ a partial translation in $X$. We show that the function $\dom(v)\ni x\mapsto \chi_{B_{x}}T\chi_{B_{v(x)}}\in \mathcal{A}\otimes \mathcal{K}$ is in $\Gamma(X,\mathcal{A}\otimes\mathcal{K})$.

    For each $x\in X$, define 
    $$D_x:=\bigcup \{C_y:C_y\cap B_x\neq \phi\}\supseteq B_x.$$ By the third condition in Definition~\ref{stable coarse algebra}, it suffices to show that the function 
    $$\dom(v)\ni x\mapsto \chi_{D_{x}}T\chi_{D_{v(x)}}\in \mathcal{A}\otimes \mathcal{K}$$
    is in $\Gamma(X,\mathcal{A}\otimes\mathcal{K})$.
    We label $\{C_y:C_y\cap B_x\neq \phi\}=\{C_{y_1^x},C_{y_2^x},\cdots, C_{y_{n_x}^x}\}$, where $n_x$ is uniformly bounded by some $n$. We regard the operator $\chi_{D_{x}}T\chi_{D_{v(x)}}$ is an $n_x$-by-$n_{v(x)}$ matrix with entry in $\mathcal{A}\otimes \mathcal{K}$. By the fourth condition of Definition~\ref{stable coarse algebra}, it suffices to show that the function $\varphi:\dom(v)\ni x\mapsto \chi_{C_{y_i^x}}T\chi_{C_{y_j^{v(x)}}}\in \mathcal{A}\otimes \mathcal{K}$ is in $\Gamma(X,\mathcal{A}\otimes\mathcal{K})$ for any $i$ and $j$. For any subset $\Tilde{Y}\subset f(X)$, we fix a set  
    $$\Tilde{X}=\{x_y:x_y\in f^{-1}(y), y\in \Tilde{Y}\subset f(X)\}$$ of representatives of preimages and show that $\varphi_{\Tilde{X}}\in \Gamma(Y,\mathcal{A}\otimes \mathcal{K})$. We may assume $\Tilde{X}\subset \dom(v)$. We decompose $\Tilde{Y}=\sqcup_{k=1}^N A_k$ such that:
    \begin{itemize}
   
        \item for any distinct $y_1,y_2\in A_k$, we have $y_i^{x_{y_1}}\neq y_i^{x_{y_2}}$, and

        \item for any distinct $y_1,y_2\in A_k$, we have $y_j^{v(x_{y_1})}\neq y_j^{v(x_{y_2})}$.
    \end{itemize}
    Then we can define partial translations $u_k$ and $w_k$ such that
    \begin{itemize}
        \item $\dom(u_k)=\dom(w_k)=A_k$, and

        \item $u_k(y)=y_i^{x_{y}}$ and $w_k(y)=y_j^{v(x_{y})}$.
   \end{itemize}
    Then for $y\in A_k$, we have that 
    $$\varphi_{\Tilde{X}}(y)=\varphi(x_y)=\chi_{C_{{y_i}^{x_y}}}T\chi_{C_{y_j}^{v(x_y)}}= \chi_{C_{u_k(y)}}T\chi_{C_{w_k(y)}}.$$ 
    Therefore, $\varphi_{\Tilde{X}}=\sum_k u_k^*\left( T^{w_k\circ u_k^{-1}}\right)\in \Gamma(Y,\mathcal{A}\otimes\mathcal{K})$. 
\end{proof}

Combining Proposition~\ref{easiest coarse equivalence} and Lemma~\ref{independence of nets}, we can prove the following coarse invariance of twisted Roe algebras.
\begin{theorem}\label{coarse equivalence}
   Let $M$ and $N$ be metric spaces with bounded geometry, and let $X \subset M$ and $Y \subset N$ be nets of $M$ and $N$, respectively. Let $f : M \to N$ be a coarse equivalence, and fix Borel covers $\{B_x\}_{x \in X}$ and $\{C_y\}_{y \in Y}$ satisfying four conditions in Lemma~\ref{borel cover}, together with a coarse equivalence $g : X \to Y$ that is close to $f|_X$.

For a given coarse $Y$-algebra $\Gamma(Y, \mathcal{A} \otimes \mathcal{K})$, let $\Gamma(X, \mathcal{A} \otimes \mathcal{K})$ be the coarse $X$-algebra defined via $g$ as in \eqref{coarse alg of subspace}. Then there is an isomorphism
\[
    C^*(M; \Gamma(X, \mathcal{A} \otimes \mathcal{K})) \;\cong\; C^*(N; \Gamma(Y, \mathcal{A} \otimes \mathcal{K})).
\]
\end{theorem}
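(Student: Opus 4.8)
The plan is to deduce the general coarse invariance from the special case in Proposition~\ref{easiest coarse equivalence}, using Lemma~\ref{independence of nets} to replace the given nets and Borel covers by ones adapted to $f$, and Remark~\ref{functoriarity} together with Lemma~\ref{well-defined up to close} to identify the coarse algebras that appear along the way. Throughout, recall that $\Gamma(X,\mathcal{A}\otimes\mathcal{K})$, being induced from the stable algebra $\Gamma(Y,\mathcal{A}\otimes\mathcal{K})$, is again a stable coarse $X$-algebra, so that Lemma~\ref{independence of nets} is available on $M$ as well as on $N$.

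\textbf{Step 1 (adapted data).} Since $f$ admits a coarse inverse it is effectively proper, so there is $L>0$ with $\operatorname{diam} f^{-1}(n)\le L$ for every $n\in N$. I would fix a sufficiently separated, coarsely dense subset $Y'\subset f(M)\subset N$, pick for each $y'\in Y'$ a point $x'_{y'}\in f^{-1}(y')$, and set $X':=\{x'_{y'}:y'\in Y'\}$; one checks that $X'$ is a net of $M$ and that $f|_{X'}\colon X'\to Y'=f(X')$ is a bijection. The essential point is to choose the Borel cover of $M$ so that every fibre $f^{-1}(n)$ lies in a single cell: fixing a Borel map $N\to Y'$, $n\mapsto y'(n)$, with $y'(y')=y'$ and $\sup_{n}d(n,y'(n))<\infty$, the partition $B'_{x'_{y'}}:=\{m\in M: y'(f(m))=y'\}$ (suitably adjusted so that the four conditions of Lemma~\ref{borel cover} hold) has uniformly bounded cells, and the partition $C'_{y'}:=\{n\in N: y'(n)=y'\}$ is a Borel cover of $N$ satisfying $f(B'_{x'})\subset C'_{f(x')}$ for all $x'$. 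I expect this step to be the main obstacle: a Borel cover of $M$ chosen without reference to $f$ need not be compatible with \emph{any} disjoint bounded cover of $N$, because $f$ can be badly non-injective on $M$ even though it is injective on $X'$; adapting the cover of $M$ to the partition of $M$ into fibres of $f$ is exactly what removes this.

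\textbf{Step 2 (the chain).} Let $\Gamma(Y',\mathcal{A}\otimes\mathcal{K})$ be the coarse $Y'$-algebra induced from $\Gamma(Y,\mathcal{A}\otimes\mathcal{K})$ by some coarse equivalence $e\colon Y'\to Y$ close to $\operatorname{id}_N$, and set $\Gamma(X',\mathcal{A}\otimes\mathcal{K}):=f^{*}\bigl(\Gamma(Y',\mathcal{A}\otimes\mathcal{K})\bigr)$, which coincides with the algebra given by \eqref{twist for coarse equi} because $f|_{X'}$ is a bijection. Then Lemma~\ref{independence of nets} (on $N$, with the nets $Y$ and $Y'$) gives $C^*(N;\Gamma(Y',\mathcal{A}\otimes\mathcal{K}))=C^*(N;\Gamma(Y,\mathcal{A}\otimes\mathcal{K}))$; Proposition~\ref{easiest coarse equivalence} applied to $f$ with the net $X'$ and the covers of Step~1 gives $C^*(M;\Gamma(X',\mathcal{A}\otimes\mathcal{K}))\cong C^*(N;\Gamma(Y',\mathcal{A}\otimes\mathcal{K}))$; and Lemma~\ref{independence of nets} (on $M$, with the nets $X$ and $X'$) gives $C^*(M;\Gamma(X',\mathcal{A}\otimes\mathcal{K}))=C^*(M;\Gamma(X,\mathcal{A}\otimes\mathcal{K}))$. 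Concatenating these three isomorphisms yields the statement.

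\textbf{Step 3 (matching the algebras).} The only thing left to verify is that the $\Gamma(X',\mathcal{A}\otimes\mathcal{K})$ of Step~2 has the form needed to apply Lemma~\ref{independence of nets} on $M$, i.e. that it is induced from $\Gamma(X,\mathcal{A}\otimes\mathcal{K})$ by a coarse equivalence $X'\to X$ close to $\operatorname{id}_M$ (and similarly on the $N$-side). Fix a nearest-point map $j\colon X'\to X$, which is close to $\operatorname{id}_M$. By Remark~\ref{functoriarity}, $\Gamma(X',\mathcal{A}\otimes\mathcal{K})=f^{*}(\Gamma(Y',\mathcal{A}\otimes\mathcal{K}))$ is the $X'$-algebra induced from $\Gamma(Y,\mathcal{A}\otimes\mathcal{K})$ via $k:=e\circ f|_{X'}$, while the $X'$-algebra induced from $\Gamma(X,\mathcal{A}\otimes\mathcal{K})$ via $j$ is, again by Remark~\ref{functoriarity}, the one induced from $\Gamma(Y,\mathcal{A}\otimes\mathcal{K})$ via $g\circ j$. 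Since $j$ is close to $\operatorname{id}_M$, $g$ is close to $f|_X$, $f$ is a coarse map, and $e$ is close to $\operatorname{id}_N$, the maps $g\circ j$ and $k$ are close as maps $X'\to Y$; hence by Lemma~\ref{well-defined up to close} they induce the same coarse $X'$-algebra, which is what we need. The identification on the $N$-side is the same argument with $f|_{X'}$ and $e$ replacing $g\circ j$ and $k$. (The hypothesis that $g$ is close to $f|_X$ enters only here, and stability of the coarse algebras is used only to invoke Lemma~\ref{independence of nets}.)
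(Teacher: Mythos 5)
Your proposal is correct and follows essentially the same route as the paper: pass to nets $X'\subset M$, $Y'=f(X')\subset N$ on which $f$ is bijective with compatible Borel covers, apply Proposition~\ref{easiest coarse equivalence} there, and then use Lemma~\ref{independence of nets} together with Remark~\ref{functoriarity} (and Lemma~\ref{well-defined up to close}) to identify the resulting twisted Roe algebras with the ones for the originally given nets and coarse algebras. Your Steps~1 and~3 merely spell out the construction of the adapted data and the matching of induced coarse algebras that the paper's proof states more tersely.
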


\begin{proof}
    We can take nets $Z\subset M$ and $W\subset N$ such that $f:Z\to W$ is bijective. We can take Borel covers $\{B'_z\}_{z\in Z}$ and $\{C'_w\}_{w\in W}$ such that $f(B'_z)\subset C'_{f(z)}$. Let $\Gamma(W,\mathcal{A}\otimes\mathcal{K})$ be the stable coarse algebra induced by any coarse equivalence $Y\to W$ that is close to the inclusion $Y\hookrightarrow N$. By defining $\Gamma(Z,\mathcal{A}\otimes\mathcal{K}):=f^*\Gamma(W,\mathcal{A}\otimes\mathcal{K})$, by Proposition~\ref{easiest coarse equivalence} we have
    \begin{align*}
        C^*(M;\Gamma(Z,\mathcal{A}\otimes\mathcal{K}))\cong C^*(N;\Gamma(Y,\mathcal{A}\otimes\mathcal{K})).
    \end{align*}
    But by Lemma~\ref{independence of nets} and Remark~\ref{functoriarity}, we have
    \begin{align*}
      \quad \quad\quad \quad\quad \quad \quad\quad \quad \quad\quad C^*(M;\Gamma(X,\mathcal{A}\otimes\mathcal{K}))= C^*(M;\Gamma(Z,\mathcal{A}\otimes\mathcal{K})).\quad \quad \quad \quad\quad \quad \quad\quad\quad\quad \quad\qedhere
    \end{align*}
\end{proof}

We will use the following theorem to prove the permanence of the twisted coarse Baum–Connes conjecture with respect to stable coarse algebras under passing to subspaces.

\begin{theorem}\label{twist for subsp.}
    Let $X$ be a metric space with bounded geometry and $Y\subset X$ a subsapce. For a given coarse $Y$-algebra $\Gamma(Y,\mathcal{A}\otimes \mathcal{K})$, there exists a coarse $X$-algebra $\Gamma(X,\mathcal{A}\otimes \mathcal{K})$ such that
    \begin{align*}
        C^*(X;\Gamma(X,\mathcal{A}\otimes \mathcal{K}))\cong C^*(Y;\Gamma(Y,\mathcal{A}\otimes \mathcal{K})).
    \end{align*}
\end{theorem}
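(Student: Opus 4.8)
The plan is to reduce Theorem~\ref{twist for subsp.} to the coarse invariance already proved in Theorem~\ref{coarse equivalence}, by realizing $C^*(Y;\Gamma(Y,\mathcal{A}\otimes\mathcal{K}))$ as a twisted Roe algebra of $X$. Since $Y$ sits inside $X$ as a subspace, the inclusion $\iota\colon Y\hookrightarrow X$ is a coarse embedding but not a coarse equivalence, so we cannot apply Theorem~\ref{coarse equivalence} directly; instead I would pull back along $\iota$. Concretely, given the stable coarse $Y$-algebra $\Gamma(Y,\mathcal{A}\otimes\mathcal{K})$, I would define a candidate coarse $X$-algebra $\Gamma(X,\mathcal{A}\otimes\mathcal{K})$ to consist of those $\varphi\in\ell^\infty(X,\mathcal{A}\otimes\mathcal{K})$ whose restrictions (pulled forward along nearest-point maps, exactly as in the decomposition trick of Lemma~\ref{independence of nets}) to the neighborhoods $N_r(Y)$ land in $\Gamma(Y,\mathcal{A}\otimes\mathcal{K})$, and which are supported within some finite neighborhood $N_r(Y)$ of $Y$. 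This is the natural ``support in $Y$'' analogue of Example~\ref{example coarse algebra}(3) with the trivial coefficient replaced by $\Gamma(Y,\mathcal{A}\otimes\mathcal{K})$.

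First I would check that this $\Gamma(X,\mathcal{A}\otimes\mathcal{K})$ is indeed a stable coarse $X$-algebra: conditions (1)–(4) of Definition~\ref{stable coarse algebra} for the support-in-$N_r(Y)$ part are verified exactly as in Example~\ref{example coarse algebra}(3), while invariance under partial translations of the ``restriction lies in $\Gamma(Y,\cdot)$'' part uses the decomposition-into-finitely-many-partial-translations argument that already appears in the proofs of the lemmas preceding this statement (a partial translation on $X$ that moves points near $Y$ to points near $Y$ induces, after decomposing the domain into boundedly many pieces, finitely many partial translations on a net of $Y$). Conditions (3) and (4) are immediate since they are pointwise/matrix-amplification operations and $\Gamma(Y,\mathcal{A}\otimes\mathcal{K})$ is already stable. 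Next I would choose a net $X_0\subset X$ compatible with a net $Y_0\subset Y$ (so that $Y_0\subset X_0$, which is possible for spaces with bounded geometry), together with compatible Borel covers, and compute the twisted Roe algebra: an operator $T\in\mathbb{C}[X;\Gamma(X,\mathcal{A}\otimes\mathcal{K})]$ must, by the support condition on the coarse algebra, be supported within $N_r(Y)\times N_r(Y)$ for some $r$, and $N_r(Y)$ is coarsely equivalent to $Y$ via the inclusion. Using Theorem~\ref{coarse equivalence} for the coarse equivalence $N_r(Y)\simeq Y$ (with the coarse $N_r(Y)$-algebra being the one induced from $\Gamma(Y,\mathcal{A}\otimes\mathcal{K})$), together with the standard fact that a twisted Roe algebra with coefficients supported in a subspace $Z\subset X$ is isomorphic to the twisted Roe algebra of $Z$ itself, I would identify $C^*(X;\Gamma(X,\mathcal{A}\otimes\mathcal{K}))$ with $C^*(N_r(Y);\Gamma(N_r(Y),\mathcal{A}\otimes\mathcal{K}))$ and then with $C^*(Y;\Gamma(Y,\mathcal{A}\otimes\mathcal{K}))$.

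The isomorphism itself is implemented by the block-cutoff operator: letting $P$ be the orthogonal projection in $B(\ell^2(X_0,\mathcal{H}_{\mathcal{A}}))$ onto $\ell^2(N_r(Y)\cap X_0,\mathcal{H}_{\mathcal{A}})$ for $r$ ranging over positive reals, every $T\in\mathbb{C}[X;\Gamma(X,\mathcal{A}\otimes\mathcal{K})]$ equals $P_rTP_r$ for some $r$, so compression to $\bigcup_r\ell^2(N_r(Y)\cap X_0,\mathcal{H}_{\mathcal{A}})$ gives a $*$-isomorphism onto $\mathbb{C}[N_1(Y);\Gamma(N_1(Y),\mathcal{A}\otimes\mathcal{K})]$ (the choice of $r=1$ being harmless up to coarse equivalence), which extends to the $C^*$-completions; composing with the coarse-equivalence isomorphism from Theorem~\ref{coarse equivalence} finishes the argument.

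The main obstacle I expect is bookkeeping rather than conceptual: making precise the ``restriction of $\varphi$ to $N_r(Y)$'' when $Y$ is not itself a net of $X$ but only a subspace, so that one has to interpose a net $Y_0$ of $Y$ and a choice of nearest-point projection $N_r(Y)\to Y_0$, and then verify that the induced coarse $X$-algebra does not depend (up to giving the same twisted Roe algebra) on these choices — this is where Lemma~\ref{well-defined up to close} and Remark~\ref{functoriarity} are needed. The other subtle point is ensuring that when a partial translation $v$ on (a net of) $X$ is restricted to the part of its domain lying in $N_r(Y)$, its image may leave $N_r(Y)$ but stays in $N_{r+S}(Y)$ where $S=\sup d(x,v(x))$; since the support condition in $\Gamma(X,\mathcal{A}\otimes\mathcal{K})$ allows an arbitrary finite $r$, this causes no problem, but it must be tracked carefully when checking condition (2) of Definition~\ref{stable coarse algebra}. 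Everything else is a routine adaptation of the decomposition arguments already carried out in this section.
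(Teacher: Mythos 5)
Your construction is essentially the paper's: the paper also builds the coarse $X$-algebra from $\Gamma(Y,\mathcal{A}\otimes\mathcal{K})$ by pulling back along coarse equivalences $q_r\colon N_r(Y)\to Y$ (well-defined by Lemma~\ref{well-defined up to close}), imposing support in finite neighborhoods of $Y$, and taking the closure, namely $\Gamma(X,\mathcal{A}\otimes\mathcal{K}):=\varinjlim_r\Gamma(N_r(Y),\mathcal{A}\otimes\mathcal{K})$; it then shows every finite-propagation operator in $\mathbb{C}[X;\Gamma(X,\mathcal{A}\otimes\mathcal{K})]$ is (up to arbitrarily small perturbation) supported in $N_r(Y)\times N_r(Y)$ for some $r$, and concludes by the coarse invariance of Theorem~\ref{coarse equivalence} applied to $N_r(Y)\simeq Y$. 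So the strategy, the key lemmas cited, and the role of stability are all the same.

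Two points in your last step need repair, though neither changes the approach. First, you cannot identify $C^*(X;\Gamma(X,\mathcal{A}\otimes\mathcal{K}))$ with $C^*(N_r(Y);\Gamma(N_r(Y),\mathcal{A}\otimes\mathcal{K}))$ for a single fixed $r$, nor compress onto $\mathbb{C}[N_1(Y);\Gamma(N_1(Y),\mathcal{A}\otimes\mathcal{K})]$: different operators require different $r$, so the correct statement is $C^*(X;\Gamma(X,\mathcal{A}\otimes\mathcal{K}))=\varinjlim_r C^*(N_r(Y);\Gamma(N_r(Y),\mathcal{A}\otimes\mathcal{K}))$, and one then uses that the isomorphisms $C^*(N_r(Y);\cdot)\cong C^*(Y;\Gamma(Y,\mathcal{A}\otimes\mathcal{K}))$ of Theorem~\ref{coarse equivalence} are compatible with the inclusions $N_s(Y)\subset N_r(Y)$ (via Lemma~\ref{well-defined up to close} and Remark~\ref{functoriarity}), which is exactly how the paper concludes. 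Second, since your coarse $X$-algebra is a closure, an element of $\mathbb{C}[X;\Gamma(X,\mathcal{A}\otimes\mathcal{K})]$ only has its functions $T^v$ in that closure, so the claim that $T$ itself is supported in some $N_r(Y)\times N_r(Y)$ holds only after an arbitrarily small perturbation; this approximation step must be made explicit (the paper does so) before passing to the limit.
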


\begin{proof}
    For $r>0$, define $N_r(Y):=\{x\in X:d(x,Y)\leq r\}$. Fix a coarse equivalence $q_r:N_r(Y)\to Y$, which fixes every point in $Y$. Then, this map induces a coarse $N_r(Y)$-algebra $\Gamma(N_r(Y),\mathcal{A}\otimes \mathcal{K})$ by \eqref{twist for coarse equi}, which is independent of the choice of $q_r$ by Lemma \ref{well-defined up to close}. Therefore for $s<r$, we have $\Gamma(N_s(Y),\mathcal{A}\otimes \mathcal{K})\subset\Gamma(N_r(Y),\mathcal{A}\otimes \mathcal{K})$ and the inductive limit 
    \begin{align}\label{coarse alg from subspace}
        \Gamma(X,\mathcal{A}\otimes \mathcal{K}):=\varinjlim \Gamma(N_r(Y),\mathcal{A}\otimes \mathcal{K})
    \end{align}
    is an well-defined coarse $X$-algebra. We claim 
    \begin{align*}
        C^*(X;\Gamma(X,\mathcal{A}\otimes \mathcal{K}))= \varinjlim C^*(N_r(Y);\Gamma(N_r(Y),\mathcal{A}\otimes \mathcal{K})) \subset B(\ell^2(X,\mathcal{H}_{\mathcal{A}})).
    \end{align*}
    We show that $\mathbb{C}[X;\Gamma(X,\mathcal{A}\otimes \mathcal{K})] \subset \varinjlim C^*(N_r(Y);\Gamma(N_r(Y),\mathcal{A}\otimes \mathcal{K}))$. Assume $T$ is in the left hand side with propagation less than $r_0$ for some $r_0$. We denote $E_{r_0}:=\{(x,x')\in X\times X:d(x,x')<r_0\}$ and decompose
    \begin{align*}
    E_{r_0}\subset  \bigsqcup_{j=1}^N (E(v_j)\sqcup E(v_j^{-1}))
    \end{align*}
    for some partial translations $v_1,\cdots ,v_N$ such that $\dom(v_j)\cap \ran (v_j)=\emptyset$ for all $j=1,\cdots ,N$, where 
    \begin{align*}
        E(v_j):=\{(x,v_j(x))\in X \times X :x\in \dom(v_j)\}. 
    \end{align*}
    For $r>0$ such that $T^{v_j}\in \Gamma(N_r(Y),\mathcal{A}\otimes\mathcal{K})$ for $j=1,\cdots, N$ (such $r$ exists by perturbing by taking arbitrary small perturbation of $T$, if necessary), we have $\supp (T)\subset N_r(Y)\times N_r(Y)$ and this means $T\in C^*(N_r(Y);\Gamma(N_r(Y),\mathcal{A}\otimes \mathcal{K}))$. The other inclusion is obvious. Therefore by Theorem~\ref{coarse equivalence}, we have 
    \begin{align*}
      \quad\quad\quad\quad  C^*(X;\Gamma(X,\mathcal{A}\otimes \mathcal{K}))= \varinjlim C^*(N_r(Y);\Gamma(N_r(Y),\mathcal{A}\otimes \mathcal{K})) \cong C^*(Y;\Gamma(Y,\mathcal{A}\otimes \mathcal{K})).\quad\quad\quad\quad \qedhere
    \end{align*}
\end{proof}

\subsection{The twisted coarse Baum--Connes conjecture}
In this subsection, we shall introduce the twisted coarse Baum--Connes conjecture for a metric space with respect to stable coarse algebras. 

To introduce the concept of twisted coarse $K$-homology, we shall first recall the definition of Rips complex. 

\begin{definition}
    Let $X$ be a metric space with bounded geometry and let $s>0$. The Rips complex at scale $s$, denoted by $P_s(X)$, is the simplicial complex whose vertices are $X$, and a subset $\{x_1,x_2,\dots, x_n\}$ spans a simplex if $d(x_i,x_j)\leq s$ for all $1\leq i,j\leq k$.
\end{definition}
Each Rips complex $P_d(X)$ can be equipped with the normalized spherical metric as follows. The simplex spanned by $\{x_1,x_2,\cdots,x_n\}$ is identified with the quadrant with diameter $1$:
$$S^{n-1}_+=\left\{(a_1,a_2,\cdots, a_n):\sqrt{\sum_{i=1}^na_i^2}=\frac{2}{\pi},~a_i\geq 0,\forall~1\leq i\leq n\right\}$$
via the map
$$
\sum_{j=1}^n a_jx_j\mapsto \frac{2}{\pi}\left(\frac{a_1}{\sum_{i=1}^n a_i^2},\cdots, \frac{a_n}{\sum_{i=1}^na_i^2} \right).
$$
Next, the Rips complex $P_s(X)$ is equipped with the path metric, and the distance between different connected components is defined to be infinity.  
For each $s>0$, we consider a countable dense subset $P_r(X)_0\subset P_r(X)$ such that $P_s(X)_0\subset P_{s'}(X)_0$ if $s< s'$. 

Let $M$ be a metric space with bounded geometry, $X\subset M$ a net and $\{B_x\}_{x\in X}$ a Borel cover of $M$ which satisfies the four conditions in Lemma~\ref{borel cover}. For a $C^*$-algebra $\mathcal{A}$, we consider a stable coarse $X$-algebra $\Gamma(X, \mathcal{A}\otimes\mathcal{K})$. Now we are ready to define the twisted localization algebras. 
\begin{definition}\label{localization}
    The $\Gamma(X,\mathcal{A}\otimes \mathcal{K})$-twisted algebraic localization algebra, which is denoted by $\mathbb{C}_L[M; \Gamma(X,\mathcal{A}\otimes \mathcal{K})]$, is defined to be the $*$-algebra of all uniformly bounded and uniformly continuous maps $f: [1,\infty)\to \mathbb{C}[M; \Gamma(X,\mathcal{A}\otimes\mathcal{K})]$ satisfying
    $${\rm Propagation}(f(t))\to 0,~\mbox{as}~t\to \infty.$$
    The $\Gamma(X,\mathcal{A}\otimes \mathcal{K})$-twisted localization algebra $C^*_L(M, \Gamma(X, \mathcal{A}\otimes\mathcal{K}))$ is the completion of the $*$-algebra $\mathbb{C}_L[M; \Gamma(X,\mathcal{A}\otimes\mathcal{K})]$ under the norm
    $$
    \|f\|=\sup_{t\in [1, \infty)}\|f(t)\|
    $$
    for each $f\in \mathbb{C}_L[P_r(X); \Gamma(X,\mathcal{A}\otimes\mathcal{K})]$.
\end{definition}

For each $s>0$, there is a natural evaluation-at-one map
\begin{align*}
  e: C^*_L(P_s(X), \Gamma(X,\mathcal{A}\otimes\mathcal{K})) &\to C^*(P_s(X), \Gamma(X,\mathcal{A}\otimes\mathcal{K}))\\ 
  f &\mapsto f(1).
\end{align*}
This is a $*$-homomorphism and it induces a homomorphism
\begin{align}\label{assembly}
   e_*:K_*(C^*_L(P_s(X), \Gamma(X,\mathcal{A}\otimes\mathcal{K}))) \to K_*(C^*(P_s(X), \Gamma(X,\mathcal{A}\otimes\mathcal{K}))). 
\end{align}
Since the right hand side of \eqref{assembly} is independent of $s$ by Theorem~\ref{coarse equivalence}, passing to the limit yields the twisted coarse assembly map 
$$\mu:\lim_{s\to \infty}K_*(C^*_L(P_s(X), \Gamma(X,\mathcal{A}\otimes\mathcal{K}))) \to K_*(C^*(X, \Gamma(X,\mathcal{A}\otimes\mathcal{K}))).$$

\begin{definition}[\textbf{The twisted coarse Baum-Connes conjecture with respect to any stable coarse algebras}]\label{twisted coarse baum--connes}
A discrete metric space $X$ with bounded geometry is said to satisfy the \emph{twisted coarse conjecture with respect to any stable coarse algebras} if the twisted coarse Baum--Connes assembly map
$$\mu:\lim_{s\to \infty}K_*(C^*_L(P_s(X), \Gamma(X,\mathcal{A}\otimes\mathcal{K}))) \to K_*(C^*(X, \Gamma(X,\mathcal{A}\otimes\mathcal{K}))),$$
is an isomorphism for any stable coarse algebra $\Gamma(X,\mathcal{A}\otimes\mathcal{K})$.
The kernel of the evaluation map $e$ is denoted by $C^*_{L,0}(P_r(X), \Gamma(X,\mathcal{A}\otimes\mathcal{K}))$ and is called the obstruction algebra. Then by the six-term exact sequence of $K$-theory of $C^*$-algebras, $\mu$ is isomorphic if and only if 
\begin{align*}
   \lim_{s\to \infty} K_*(C^*_{L,0}(P_s(X), \Gamma(X,\mathcal{A}\otimes\mathcal{K})))=0.
\end{align*}
\end{definition}

For a metric space with bonded geometry, if it admits a coarse embedding into Hilbert space, then it satisfies the twisted coarse Baum--Connes conjecture with respect to any stable coarse algebra by \cite[Theorem~3.7]{DengGuo2024TwistedRoeAlgebras}. In this paper, we shall study the twisted coarse Baum--Connes conjecture with respect to any coarse algebras for relative hyperbolic groups.

% the $\Gamma(X, \mathcal{A}\otimes \mathcal{K})$-twsietd Baum-COnnes conjecture with coefficnets in $\mathcal $A$. 

% the twsietd coarse Baum--Connes conjecture with coefficents in $\mathcal{A}$. 

% the twisted coarse Baum--Connes conjecture with respect to any stabel coarse alegbras----We use this one. 

\subsection{Permanence properties of the twisted coarse Baum--Connes conjecture with respect to stable coarse algebras}
In this subsection we prove the permanence properties of the twisted coarse Baum--Connes conjecture with respect to any stable coarse algebras under coarse invariance and taking subspaces.

We first establish how a continuous and coarsely equivalent map $f:M \to N$ induces a map on the $K$-theory of their twisted localization algebras. This construction allows us to show the coarse invariance of the twisted coarse Baum–Connes conjecture. We proceed by recalling the necessary terminology (cf.~\cite[Definition 4.3.3]{WillettYuHigherindextheoryBook}).

\begin{definition}
    For a map $f:M\to N$ and $\varepsilon>0$, an isometry $V:\ell^2(M_0;\mathcal{H}) \to \ell^2(N_0;\mathcal{H})$ between Hilbert spaces is called an $\varepsilon$-cover of $f$ if
    \begin{align*}
        \supp(V)\subset \{(x,y)\in M\times N:~ d(f(x),y)\leq \varepsilon\}.
    \end{align*}
    For a $C^*$-algebra $\mathcal{A}$, an $\varepsilon$-cover of $f$ is an isometry
    $V\otimes \text{id}_{\mathcal{A}}:\ell^2(M_0;\mathcal{H}_{\mathcal{A}}) \to \ell^2(N_0;\mathcal{H}_{\mathcal{A}})$ for some $\varepsilon$-cover $V:\ell^2(M_0;\mathcal{H}) \to \ell^2(N_0;\mathcal{H})$ between Hilbert spaces.

    A uniformly continuous family $\{V_t\}_{t\in [1,\infty)}$ of isometries from $\ell^2(M_0;\mathcal{H}_{\mathcal{A}})$ to $\ell^2(N_0;\mathcal{H}_{\mathcal{A}})$ is called an continuous cover of $f$ if there exists a decreasing sequence $\{\varepsilon_k\}_k$ of positive numbers with $\lim_{k}\varepsilon_k=0$ such that $V_t$ is an $\varepsilon_k$ cover of $f$ for all $t\geq k$.
\end{definition}

Let $M$ and $N$ be metric spaces with bounded geometry. Fix their net $X\subset M$ and $Y\subset N$, and Borel covers $\{B_x\}_{x\in X}$ and $\{C_y\}_{y\in Y}$ of $M$ and $N$ satisfying the four conditions in Lemma~\ref{borel cover}, respectively. Let $f:M\to N$ be a continuous coarsely equivariant map. Fix any stable coarse $Y$-algebra $\Gamma(Y,\mathcal{A}\otimes \mathcal{K})$, and let $\Gamma(X,\mathcal{A}\otimes \mathcal{K})$ be the coarse $X$-algebra induced by $f$. 

We can also take nets $X'\subset M$ and $Y'\subset N$ and Borel covers $\{B'_x\}_{x\in X'}$ and $\{C'_y\}_{y\in Y'}$ of $M$ and $N$ such that $f$ is a bijection between $X'$ and $Y'$ and $f(B'_x)\subset C'_{f(x)}$.
We fix a decreasing sequence $\{\varepsilon_j\}_j$ of positive numbers with $\lim_{j}\varepsilon_j=0$.
For each $x\in X'$, since $f$ is continuous, by decomposing $C'_{f(x)}$ into a smaller disjoint Borel sets satisfying the condition in Lemma~\ref{borel cover}, we obtain a unitary
\begin{align*}
    V_j(x):\ell^2(B'_{x}\cap M_0,\mathcal{H})\to \ell^2(C'_{f(x)}\cap N_0,\mathcal{H})
\end{align*}
such that $\supp V_j(x)\subset \{(p,q)\in B'_{x}\times C'_{f(x)}:d(f(p),q)\leq \varepsilon_j\}$. We consider
\begin{align*}
    V_j:=\bigoplus_{x\in X'} V_j(x)\otimes \text{id}_{\mathcal{A}} : \ell^2( M_0,\mathcal{H}_{\mathcal{A}})\to \ell^2(N_0,\mathcal{H}_{\mathcal{A}}),
\end{align*}
then $V_j$ is an $\varepsilon_j$-cover of $f$. We denote by $\Gamma(X',\mathcal{A}\otimes \mathcal{K})$ and $\Gamma(Y',\mathcal{A}\otimes \mathcal{K})$ the coarse algebra induced by the coarse equivalence between $X$ and $X'$, and between $Y$ and $Y'$, respectively. Then, it is obvious that the adjoint by $V_j$ sends $C^*(M;\Gamma(X',\mathcal{A}\otimes \mathcal{K}))$ to $C^*(N;\Gamma(Y',\mathcal{A}\otimes \mathcal{K}))$. By Lemma~\ref{independence of nets}, for the originally fixed $\{B_x\}_{x\in X}$ and $\{C_y\}_{y\in Y}$, the adjoint by $V_j$ sends $C^*(M;\Gamma(X,\mathcal{A}\otimes \mathcal{K}))$ to $C^*(N;\Gamma(Y,\mathcal{A}\otimes \mathcal{K}))$. Define a uniformly continuous family of unitaries $(W(t))_{t\in [1,\infty)}$, where
    \begin{align*}
        W(t):\ell^2\left(M_0,\mathcal{H}\otimes\mathbb{C}^2\otimes \mathcal{A}\right)\to \ell^2\left(N_0,\mathcal{H}\otimes\mathbb{C}^2\otimes \mathcal{A}\right) 
    \end{align*}
    by
    \begin{align}\label{continuous covers}
        W(t)=R(t-j+1)(V_j\oplus V_{j+1})R(t-j+1)^*
    \end{align}
    for all $j-1\leq t\leq j$, where $R(t)$ is the rotation
    \begin{align*}
        R(t)=\begin{pmatrix}
            \cos{(\pi t/2)} & \sin{(\pi t/2)}\\
            -\sin{(\pi t/2)} & \cos{(\pi t/2)}\\
        \end{pmatrix}.
    \end{align*}
    Then, $(W(t))_t$ is a continuous cover of $f$ and the adjoint by $(W(t))_t$ intertwines $C^*_L(M;\Gamma(X,\mathcal{A}\otimes \mathcal{K}))$ and $C^*_L(N;\Gamma(Y,\mathcal{A}\otimes \mathcal{K}))$. We denote
    \begin{align*}
        f_*:=\text{ad}(W(t))_*:K_*(C^*_L(M;\Gamma(X,\mathcal{A}\otimes \mathcal{K}))) \to K_*(C^*_L(N;\Gamma(Y,\mathcal{A}\otimes \mathcal{K}))).
    \end{align*}
    \begin{lemma}
        The map $f_*$ is independent of the choice of cover $\{W_t\}$ of $f$ such that
        \begin{align*}
            \text{ad}(W_t)\left(C^*_L(M;\Gamma(X,\mathcal{A}\otimes \mathcal{K}))\right) \subset C^*_L(N;\Gamma(Y,\mathcal{A}\otimes \mathcal{K})).
        \end{align*}
    \end{lemma}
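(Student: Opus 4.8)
The plan is to realize $\mathrm{ad}(W(\cdot))$ and $\mathrm{ad}(W'(\cdot))$, for two continuous covers $\{W(t)\}_t$ and $\{W'(t)\}_t$ of $f$ both satisfying the stated containment, as the two endpoints of a homotopy of $*$-homomorphisms into $M_2\big(C^*_L(N;\Gamma(Y,\mathcal{A}\otimes\mathcal{K}))\big)$, and then to invoke homotopy invariance of $K$-theory together with matrix stabilization. For $s\in[0,1]$ I would set
\[
Z_s(t):=\begin{pmatrix}\cos(\pi s/2)\,W(t)\\\sin(\pi s/2)\,W'(t)\end{pmatrix}:\ \ell^2(M_0,\mathcal{H}_{\mathcal{A}})\longrightarrow\ell^2(N_0,\mathcal{H}_{\mathcal{A}})\oplus\ell^2(N_0,\mathcal{H}_{\mathcal{A}}).
\]
Since $W(t),W'(t)$ are isometries and the two rows of $Z_s(t)$ are mutually orthogonal, $Z_s(t)^*Z_s(t)=1$, so $Z_s(t)$ is an isometry and $s\mapsto Z_s(t)$ is norm continuous uniformly in $t$. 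Thus $\Phi_s:=\mathrm{ad}(Z_s(\cdot))$ is a $*$-homomorphism $B(\ell^2(M_0,\mathcal{H}_{\mathcal{A}}))\to M_2\big(B(\ell^2(N_0,\mathcal{H}_{\mathcal{A}}))\big)$, and I would check it restricts to a $*$-homomorphism $C^*_L(M;\Gamma(X,\mathcal{A}\otimes\mathcal{K}))\to M_2\big(C^*_L(N;\Gamma(Y,\mathcal{A}\otimes\mathcal{K}))\big)$ depending continuously on $s$, with $\Phi_0=\iota_{11}\circ\mathrm{ad}(W(\cdot))$ and $\Phi_1=\iota_{22}\circ\mathrm{ad}(W'(\cdot))$, where $\iota_{11},\iota_{22}$ are the upper-left and lower-right corner embeddings into $2\times2$ matrices.

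The content is in showing $\Phi_s$ actually lands in $M_2\big(C^*_L(N;\Gamma(Y,\mathcal{A}\otimes\mathcal{K}))\big)$. For $T=(T(t))_t\in\mathbb{C}_L[M;\Gamma(X,\mathcal{A}\otimes\mathcal{K})]$ the diagonal entries of $\Phi_s(T)$ are scalar multiples of $\mathrm{ad}(W(\cdot))(T)$ and $\mathrm{ad}(W'(\cdot))(T)$, which lie in $C^*_L(N;\Gamma(Y,\mathcal{A}\otimes\mathcal{K}))$ by hypothesis, while the off-diagonal entries are scalar multiples of the mixed operator $t\mapsto W(t)T(t)W'(t)^*$. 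That this mixed operator is locally compact and has propagation tending to $0$ follows from the same estimates that force $\mathrm{ad}(W(\cdot))(T)$ into the localization algebra, since $W(t)$ and $W'(t)$ are both supported in an $\varepsilon_k$-neighborhood of the graph of $f$ for $t\ge k$.

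The main obstacle is the twist condition for the mixed operator: for every partial translation $v$ on $Y$ one must show $y\mapsto\big(W(t)T(t)W'(t)^*\big)^v(y)$ belongs to $\Gamma(Y,\mathcal{A}\otimes\mathcal{K})$. Fixing $v$ and cutting $W(t)$, $T(t)$, $W'(t)^*$ by the Borel pieces $\{B_x\}_{x\in X}$, $\{C_y\}_{y\in Y}$, one writes $\big(W(t)T(t)W'(t)^*\big)^v(y)$ as a finite sum, uniformly in $y$, of products
\[
\big(\chi_{C_y}W(t)\chi_{B_{x(y)}}\big)\,\big(\chi_{B_{x(y)}}T(t)\chi_{B_{x'(y)}}\big)\,\big(\chi_{B_{x'(y)}}W'(t)^*\chi_{C_{v(y)}}\big),
\]
where, by bounded geometry and coarseness of $f$, the indices $x(y)$ and $x'(y)$ run over boundedly many choices of bounded displacement from $y$ and $v(y)$. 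After decomposing $\dom(v)$ into finitely many pieces on which these assignments are injective, the middle factor $y\mapsto\chi_{B_{x(y)}}T(t)\chi_{B_{x'(y)}}$ is, on each piece, of the form $\big(T(t)^{w}\big)_{\widetilde X}$ for a partial translation $w$ on $X$ and a set $\widetilde X$ of representatives of preimages under the coarse equivalence $g:X\to Y$ used to define $\Gamma(X,\mathcal{A}\otimes\mathcal{K})$; since $T^{w}\in\Gamma(X,\mathcal{A}\otimes\mathcal{K})$, it lies in $\Gamma(Y,\mathcal{A}\otimes\mathcal{K})$ by the very definition~\eqref{twist for coarse equi}. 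The outer factors, after the unitary identifications and using that covers have the form $V(t)\otimes\mathrm{id}_{\mathcal{A}}$, are bounded sequences of operators of the form $(\text{operator on }\mathcal{H})\otimes\mathrm{id}_{\mathcal{A}}$, so conditions (3) and (4) of Definition~\ref{stable coarse algebra}---multiplication in the $B(\mathcal{H})$-direction and matrix amplification---force the whole finite sum back into $\Gamma(Y,\mathcal{A}\otimes\mathcal{K})$. The $(2,1)$ entry is handled identically with the roles of $W$ and $W'$ exchanged, and the estimate passes to the norm closure. This is exactly the place where stability of the coarse algebras is indispensable: without conditions (3) and (4) there is no reason for $W(t)T(t)W'(t)^*$ to lie in the twisted localization algebra.

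Granting this, $\{\Phi_s\}_{s\in[0,1]}$ is a homotopy of $*$-homomorphisms, so $(\Phi_0)_*=(\Phi_1)_*$ on $K$-theory. Since $\iota_{11}$ and $\iota_{22}$ are conjugate by the rotation path $R(s)$ and hence induce the same (standard stabilization) isomorphism $K_*\big(C^*_L(N;\Gamma(Y,\mathcal{A}\otimes\mathcal{K}))\big)\cong K_*\big(M_2(C^*_L(N;\Gamma(Y,\mathcal{A}\otimes\mathcal{K})))\big)$, it follows that $\mathrm{ad}(W(\cdot))_*=\mathrm{ad}(W'(\cdot))_*$; that is, $f_*$ does not depend on the choice of cover satisfying the containment.
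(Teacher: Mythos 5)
Your argument is, at its core, the paper's own argument in a different packaging: the paper conjugates the two corner compressions $W_1TW_1^*\oplus 0$ and $0\oplus W_2TW_2^*$ by the explicit unitary $U(t)=\bigl(\begin{smallmatrix}1-W_1W_1^* & W_1W_2^*\\ W_2W_1^* & 1-W_2W_2^*\end{smallmatrix}\bigr)$ and shows $U(t)$ is a multiplier of $C^*_L(N;\Gamma(Y,\mathcal{A}\otimes\mathcal{K}))\otimes M_2$, whereas you rotate through the isometries $Z_s(t)$; in both versions the only real content is that mixed products involving $W(t)W'(t)^*$ satisfy the twist condition, which comes from condition (3) of Definition~\ref{stable coarse algebra}. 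Two points in your write-up need tightening before it matches that standard. First, your homotopy requires the mixed compression $W(t)T(t)W'(t)^*$ itself to lie in $C^*_L(N;\Gamma(Y,\mathcal{A}\otimes\mathcal{K}))$, and ``the same estimates'' is not quite enough as stated, since the hypothesis of the lemma only concerns $WTW^*$ and $W'TW'^*$. The clean fix is the factorization $WTW'^*=(WTW^*)(WW'^*)$, valid because $W^*W=1$: the first factor lies in the twisted localization algebra by hypothesis, and $W(t)W'(t)^*$ has propagation at most $2\varepsilon_k\to 0$ because both isometries cover the same map $f$ and is of the form $(\text{Hilbert space operator})\otimes\mathrm{id}_{\mathcal{A}}$; one is then reduced verbatim to the paper's computation that $\bigl(A_t\,W(t)W'(t)^*\bigr)^v\in\Gamma(Y,\mathcal{A}\otimes\mathcal{K})$ via condition (3), and this also disposes of propagation, local compactness and uniform continuity of the off-diagonal entries.

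Second, in your direct verification of the twist condition, the middle factor $y\mapsto \chi_{B_{x(y)}}T(t)\chi_{B_{x'(y)}}$ is not literally of the form $\bigl(T(t)^{w}\bigr)_{\widetilde X}$: the point $x(y)$ picked out by the support of the cover need not satisfy $g(x(y))=y$ (it is only within bounded distance of a $g$-preimage), and the pushforward in \eqref{twist for coarse equi} is supported exactly on $g(\widetilde X)$. After your finite decomposition one must additionally pull back by a partial translation $u$ on $Y$ with $u(y)=g(x(y))$, so that the middle factor is $u^*\bigl((T(t)^{w})_{\widetilde X}\bigr)$, which lies in $\Gamma(Y,\mathcal{A}\otimes\mathcal{K})$ by translation invariance; this is the same decomposition-and-translation device used in the proof of Lemma~\ref{independence of nets}. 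With these repairs your homotopy argument is correct and yields the same conclusion as the paper's multiplier-unitary argument, the final step (corner embeddings inducing the stabilization isomorphism) being standard in both.
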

    \begin{proof}
Assume that families of isometries $\{W_t^1\}$ and $\{W_t^2\}$ are covers of $f$.
Since the unitary
\begin{align*}
U(t)
:=
\begin{pmatrix}
1 - W_t^1 (W_t^1)^* & W_t^1 (W_t^2)^* \\
W_t^2 (W_t^1)^* & 1 - W_t^2 (W_t^2)^*
\end{pmatrix}
\end{align*}
intertwines 
\[
\begin{pmatrix}
W_t^1 T (W_t^1)^* & 0 \\
0 & 0
\end{pmatrix}
\quad \text{and} \quad
\begin{pmatrix}
0 & 0 \\
0 & W_t^2 (W_t^2)^*
\end{pmatrix},
\]
it suffices to show that the function $\{U(t)\}$ is in the multiplier of 
$C^*_L(N;\Gamma(Y,\mathcal{A}\otimes \mathcal{K})) \otimes M_2$.
It is easy to see that $\{W_t^i (W_t^j)^*\}$ is in the multiplier of 
$C^*_L(N)$ for $i,j=1,2$.

For every partial translation $v$ on $Y$, $(T_t)\in C^*_L(N;\Gamma(Y,\mathcal{A}\otimes \mathcal{K}))$ and $t\in [1,\infty)$, it suffices to show that the map 
\[
\left( T_t W_t^i (W_t^j)^* \right)^v : Y \to \mathcal{A}\otimes \mathcal{K}
\]
is in $\Gamma(Y,\mathcal{A}\otimes \mathcal{K})$.
We have that
\begin{align*}
\left( T_t W_t^i (W_t^j)^* \right)^v (y)
&= \chi_{C_y} \left( T_t W_t^i (W_t^j)^* \right) \chi_{C_{v(y)}} \\
&= \sum_z 
\left( \chi_{C_y} T_t \chi_{C_z} \right)
\left( \chi_{C_z} W_t^i (W_t^j)^* \chi_{C_{v(y)}} \right) \\
&= \sum_z 
\left( \chi_{C_y} T_t \chi_{C_z} \right)
\left(
\chi_{C_z}
\bigl(
\tilde{W}_t^i (\tilde{W}_t^j)^* \otimes \mathrm{id}_{\mathcal A}
\bigr)
\chi_{C_{v(y)}}
\right).
\end{align*}
for some covers $\{\tilde{W}_t^i\}$ and $\{\tilde{W}_t^j\}$ between Hilbert spaces 
$\ell^2(M_0,\mathcal{H})$ and $\ell^2(N_0,\mathcal{H})$.
By the third condition in Definition~\ref{stable coarse algebra}, we obtain that
\[
\left( T_t W_t^i (W_t^j)^* \right)^v
\]
is in $\Gamma(Y,\mathcal{A}\otimes \mathcal{K})$.
This finishes the proof.
\end{proof}

    Next, we discuss the coarse invariance of the twisted coarse Baum--Connes conjecture with respect to stable coarse algebras. The following definition is from $\cite{Yu1997LocalizationandCoarseBaumConnes}$.
    \begin{definition}
        Let $f,g:M \to N$ be continuous maps between metric spaces $M$ and $N$ with bounded geometry. A continuous homotopy $\{F(t,\cdot)\}_{t\in [0,1]}$ between $f$ and $g$ is said to be strongly Lipschitz if
        \begin{enumerate}[(1)]
            \item for each $t$, $F(t,\cdot):X \to Y$ is a continuous map;

            \item there exists a constant $C>0$ such that $d(F(t,x),F(t,y))\leq Cd(x,y)$ for all $x,y\in X$ and $t\in [0,1]$;

            \item the family $\{F(\cdot,x)\}_{x\in X}$ is uniformly equicontinuous;

            \item $F(0,\cdot)=f$, $F(1,\cdot)=g$.
        \end{enumerate}
    \end{definition}

\begin{lemma}\label{homotopy}
    Let $X$ and $Y$ be metric spaces with bounded geometry. Assume that there exist coarse equivalences $f:X\to Y$ and $g:Y\to X$ such that the compositions $f\circ g$ and $g\circ f$ are close to the identity maps ${\rm id}_Y$ and ${\rm id}_X$, respectively. For any coarse $Y$-algebra $\Gamma(Y,\mathcal{A}\otimes\mathcal{K})$, if $\Gamma(X,\mathcal{A}\otimes\mathcal{K})$ is the coarse $X$-algebra induced the coarse equivalence $f$ via the formula \eqref{twist for coarse equi}, then we have that
    \begin{align*}
        \varinjlim_s K_*(C^*_L(P_s(X);\Gamma(X,\mathcal{A}\otimes \mathcal{K}))) \cong \varinjlim_s K_*(C^*_L(P_s(Y);\Gamma(Y,\mathcal{A}\otimes \mathcal{K}))).
    \end{align*}
\end{lemma}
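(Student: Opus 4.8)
The plan is to reduce everything to the coarse invariance of twisted Roe algebras (Theorem~\ref{coarse equivalence}) together with the standard observation that the localization algebra is built from the Roe algebra in a way that is natural for continuous coarsely equivariant maps, and that strongly Lipschitz homotopic maps induce the same map on $K$-theory of localization algebras. First I would replace the metric spaces $X$ and $Y$ by the inductive systems of Rips complexes $\{P_s(X)\}_s$ and $\{P_s(Y)\}_s$; the coarse equivalence $f:X\to Y$ extends, for each $s$, to a simplicial (hence continuous and coarsely equivariant) map $f_s:P_s(X)\to P_{s'}(Y)$ for a suitable $s'=s'(s)$, and similarly $g$ extends to $g_s:P_s(Y)\to P_{s''}(X)$. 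These maps are compatible with the structural maps of the inductive systems up to strongly Lipschitz homotopy, because $g\circ f$ is close to $\mathrm{id}_X$ and $f\circ g$ is close to $\mathrm{id}_Y$, and a uniformly bounded distance between simplicial maps yields a strongly Lipschitz homotopy (the straight-line homotopy inside the Rips complex at a slightly larger scale).

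Next I would invoke the map $f_*$ on $K$-theory of twisted localization algebras constructed in the paragraph preceding the lemma: for the stable coarse $Y$-algebra $\Gamma(Y,\mathcal{A}\otimes\mathcal{K})$ and the induced stable coarse $X$-algebra $\Gamma(X,\mathcal{A}\otimes\mathcal{K})$, a continuous cover $\{W(t)\}_t$ of $f_s$ exists and its adjoint intertwines $C^*_L(P_s(X);\Gamma(X,\mathcal{A}\otimes\mathcal{K}))$ with $C^*_L(P_{s'}(Y);\Gamma(Y,\mathcal{A}\otimes\mathcal{K}))$; here one uses crucially that the coarse $X$-algebra is the one induced by $f$ via \eqref{twist for coarse equi}, so that Lemma~\ref{independence of nets} and the compatibility in Remark~\ref{functoriarity} apply when passing between the various nets and Borel covers on the Rips complexes. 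By the preceding lemma, the resulting map on $K$-theory is independent of the choice of cover. Doing this for all $s$ and passing to the limit gives a homomorphism
\[
f_*:\varinjlim_s K_*(C^*_L(P_s(X);\Gamma(X,\mathcal{A}\otimes\mathcal{K})))\to \varinjlim_s K_*(C^*_L(P_s(Y);\Gamma(Y,\mathcal{A}\otimes\mathcal{K}))),
\]
and symmetrically a homomorphism $g_*$ in the other direction (note that by Remark~\ref{functoriarity} the coarse $Y$-algebra induced from $\Gamma(X,\mathcal{A}\otimes\mathcal{K})$ along $g$ agrees with $\Gamma(Y,\mathcal{A}\otimes\mathcal{K})$, so both composites make sense).

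Finally I would show $g_*\circ f_*=\mathrm{id}$ and $f_*\circ g_*=\mathrm{id}$ on the inductive limits. The composite $g_s\circ f_s:P_s(X)\to P_{s''}(Y\text{-side, then }X\text{-side})$ is strongly Lipschitz homotopic to the inclusion $P_s(X)\hookrightarrow P_{s''}(X)$ by the straight-line homotopy in the Rips complex, because $g\circ f$ moves points a bounded distance; composing a continuous cover of $g_s$ with a continuous cover of $f_s$ produces a continuous cover of $g_s\circ f_s$, and the homotopy between $g_s\circ f_s$ and the inclusion is covered by a path of continuous covers, so the two induce the same map on $K_*$ of the relevant twisted localization algebras by a standard Eilenberg swindle / homotopy argument as in \cite{Yu1997LocalizationandCoarseBaumConnes}. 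Under the structural maps of the inductive system, the inclusion induces the identity on the limit; hence $g_*\circ f_*=\mathrm{id}$, and symmetrically $f_*\circ g_*=\mathrm{id}$. The main obstacle I anticipate is bookkeeping: one must check that the continuous covers of $f_s$, $g_s$, and of the connecting homotopies can be chosen compatibly so that their adjoints genuinely land in the twisted localization algebras defined by the $f$- (resp.\ $g$-) induced coarse algebras and not merely in the untwisted localization algebras — this is exactly where conditions (3) and (4) of Definition~\ref{stable coarse algebra} and the identities of Lemma~\ref{independence of nets} and Remark~\ref{functoriarity} are needed, and verifying the homotopy invariance at the twisted level (rather than quoting it as a black box) requires re-running the strongly-Lipschitz-homotopy argument while keeping track of the partial-translation decompositions of the covers.
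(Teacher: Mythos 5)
Your proposal is correct and follows essentially the same route as the paper: extend $f$ and $g$ linearly to the Rips complexes, use continuous covers to define $f_*$ and $g_*$ on the twisted localization algebras, reduce to showing that $g\circ f$ (strongly Lipschitz homotopic to the inclusion $P_s(X)\to P_{s''}(X)$) induces the same map as the inclusion, and run Yu's Eilenberg-swindle argument, with conditions (3) and (4) of Definition~\ref{stable coarse algebra} guaranteeing that the covers and the infinite direct sums $\bigoplus_i \mathrm{ad}(W_i)(u)u^*$ stay in the twisted algebras. The one step you flag but do not execute --- re-running the swindle at the twisted level --- is handled in the paper exactly as you predict, via the fourth stability condition.
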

\begin{proof}
    We show that $g_*\circ f_*$ is the identity on $\varinjlim_s K_*(C^*_L(P_s(X);\Gamma(X,\mathcal{A}\otimes \mathcal{K})))$.
    Note that for every $s>0$ there exists $s'>0$ such that $f$ and $g$ can be linearly extended to continuous maps
    $f:P_s(X)\to P_{s'}(Y)$ and $g:P_s(Y)\to P_{s'}(X)$ such that their compositions are strongly Lipschitz homotopic to the inclusions $P_s(X)\to P_{s''}(X)$ and $P_s(Y)\to P_{s''}(Y)$.
    Take continuous covers
    \begin{align*}
        V_f(t)&: \ell^2(P_s(X)_0,\mathcal{H}_{\mathcal{A}})\to \ell^2(P_{s'}(Y)_0,\mathcal{H}_{\mathcal{A}}),\\
        V_g(t)&: \ell^2(P_{s'}(Y)_0,\mathcal{H}_{\mathcal{A}})\to \ell^2(P_{s''}(X)_0,\mathcal{H}_{\mathcal{A}})
    \end{align*}
    which intertwines the twisted localization algebras. Since the family $\{V_g(t)V_f(t)\}_t$ covers $g\circ f$, it suffices to show that
    \begin{align*}
        (g\circ f)_*={\iota_{s,s''}}_*:K_*(C^*_L(P_s(X);\Gamma(X,\mathcal{A}\otimes \mathcal{K}))) \to K_*(C^*_L(P_{s''}(X);\Gamma(X,\mathcal{A}\otimes \mathcal{K}))),
    \end{align*}
    where $\iota_{s,s''}$ is the natural inclusion $P_s(X)\to P_{s''}(X)$. Let $F(t,x)$ be a strong Lipschitz homotopy between $g\circ f$ and $\iota_{s,s''}$ ($F(0,\cdot)=g\circ f$ and $F(1,\cdot)=\iota_{s,s''}$). Take a double sequence $\{t_{i,j}\}_{i,j}$ of real numbers in $[0,1]$, a decreasing sequence $\{\varepsilon_j\}$ of positive numbers such that $\lim \varepsilon_j=0$ and a sequence of natural numbers $\{N_j\}$ such that
        \begin{enumerate}[(1)]
    \item  $t_{0,j}=0$ for all $j$ and $t_{i,j}=1$ if $i\geq N_j$;

    \item $d(F(t_{i,j},\cdot)(x),F(t_{i+1,j},\cdot)(x))\leq \varepsilon_j$;

    \item $d(F(t_{i,j},\cdot)(x),F(t_{i,j+1},\cdot)(x))\leq \varepsilon_j$.
\end{enumerate}
For each $i$ and $j$, take $\varepsilon_j$-covers $V_{i,j}$ of $F(t_{i,j},\cdot)$ and define a family $\{W_i(t)\}_{t\in [1,\infty)}$ by the same formula as \eqref{continuous covers}. It suffices to show that $[\text{ad}(W_0)(u)]=[u]\in K_1(C^*_L(P_{s''}(X);\Gamma(X,\mathcal{A}\otimes \mathcal{K})))$ for any unitary $u$ in the unitaization of $C^*_L(P_s(X);\Gamma(X,\mathcal{A}\otimes \mathcal{K}))$. (We extend $u$ to the identity on the complement $\ell^2(P_{s''}(X)_0\setminus P_s(X)_0,\mathcal{H}_{\mathcal{A}})$ to regard it as an unitary in the unitaization of $C^*_L(P_{s''}(X);\Gamma(X,\mathcal{A}\otimes \mathcal{K}))$. Define unitaries $a,b,c$ acting on $\ell^2(P_{s''}(X)_0,\ell^2\otimes \mathcal{H}_{\mathcal{A}})$ by 
\begin{align*}
    a&=\bigoplus_{i=0}^{\infty}\text{ad}(W_i)(u) u^*,\\
b&=\bigoplus_{i=0}^{\infty}\text{ad}(W_{i+1})(u) u^*,\\
c&=I\oplus \bigoplus_{i=1}^{\infty} \text{ad}(W_i)(u) u^*.
\end{align*}
Following \cite[Proposition~3.7]{Yu1997LocalizationandCoarseBaumConnes}, these are in the unitaization of $C^*_L(P_{s''}(X))$. In addition, since 
$$a(t)-I=\text{diag}(\text{ad}(W_0)(u)u^*-I,\cdots, \text{ad}(W_N)(u)u^*-I,0,0\cdots),$$ 
by the fourth condition in Definition~\ref{stable coarse algebra}, $a-I$ is an element in $C^*_L(P_{s''}(X);\Gamma(X,\mathcal{A}\otimes \mathcal{K}))$. Similarly, $b-I$ and $c-I$ are elements in $C^*_L(P_{s''}(X);\Gamma(X,\mathcal{A}\otimes \mathcal{K}))$. By arguing as \cite[Proposition~3.7]{Yu1997LocalizationandCoarseBaumConnes}, we can show $[a]=[c]$ and this implies $[\text{ad}(W_0)(u)]=[u]\in K_1(C^*_L(P_{s''}(X);\Gamma(X,\mathcal{A}\otimes \mathcal{K})))$. The other composition $f_*\circ g_*$ can be shown to be the identity on $\varinjlim_s K_*(C^*_L(P_s(Y);\Gamma(Y,\mathcal{A}\otimes \mathcal{K})))$ by the same argument.
\end{proof}

Combining the above result with Theorem~\ref{coarse equivalence}, we obtain the following permanence result.

\begin{corollary}
    Let $X$ and $Y$ be metric spaces with bounded geometry. Assume that $X$ and $Y$ are coarsely equivalent. Then $X$ satisfies the twisted coarse Baum--Connes conjecture with respect to any coarse algebras if and only if $Y$ does. 
\end{corollary}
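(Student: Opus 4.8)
The plan is to combine the two coarse-invariance results established above --- Theorem~\ref{coarse equivalence} for twisted Roe algebras and Lemma~\ref{homotopy} for the inductive limit of the $K$-theory of twisted localization algebras --- and then to check that the resulting isomorphisms are compatible with the twisted coarse assembly maps. Fix a coarse equivalence $f:X\to Y$ and a coarse inverse $g:Y\to X$. By the symmetric roles of $X$ and $Y$ it suffices to show that if $Y$ satisfies the twisted coarse Baum--Connes conjecture with respect to every stable coarse algebra, then so does $X$. So let $\Gamma(X,\mathcal{A}\otimes\mathcal{K})$ be an arbitrary stable coarse $X$-algebra, and let $\Gamma(Y,\mathcal{A}\otimes\mathcal{K})$ be the stable coarse $Y$-algebra induced from it by $g$ via formula \eqref{twist for coarse equi}.

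First I would verify that $\Gamma(X,\mathcal{A}\otimes\mathcal{K})$ is recovered from $\Gamma(Y,\mathcal{A}\otimes\mathcal{K})$ by pulling back along $f$, so that Theorem~\ref{coarse equivalence} and Lemma~\ref{homotopy} apply verbatim to this pair. Indeed, by Remark~\ref{functoriarity} the pullback along $f$ of $\Gamma(Y,\mathcal{A}\otimes\mathcal{K})$ is the coarse $X$-algebra induced from $\Gamma(X,\mathcal{A}\otimes\mathcal{K})$ by $g\circ f$; since $g\circ f$ is close to $\mathrm{id}_X$, Lemma~\ref{well-defined up to close} identifies this with the coarse $X$-algebra induced by $\mathrm{id}_X$, which is $\Gamma(X,\mathcal{A}\otimes\mathcal{K})$ itself because for the identity map each preimage is a singleton and formula \eqref{twist for coarse equi} returns the original algebra unchanged.

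Granting this, Theorem~\ref{coarse equivalence} supplies an isomorphism $C^*(X;\Gamma(X,\mathcal{A}\otimes\mathcal{K}))\cong C^*(Y;\Gamma(Y,\mathcal{A}\otimes\mathcal{K}))$ implemented by conjugation by a cover of $f$, and Lemma~\ref{homotopy} supplies an isomorphism $\varinjlim_s K_*(C^*_L(P_s(X);\Gamma(X,\mathcal{A}\otimes\mathcal{K})))\cong\varinjlim_s K_*(C^*_L(P_s(Y);\Gamma(Y,\mathcal{A}\otimes\mathcal{K})))$ realized by $f_*$ with inverse $g_*$. I then need to see that these two isomorphisms sit in a commuting square with the twisted coarse assembly maps $\mu_X$ and $\mu_Y$. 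This comes from naturality of the evaluation-at-one map $e$: if $\{W(t)\}_{t\in[1,\infty)}$ is the continuous cover of $f$ defining $f_*$, then $e\circ\mathrm{ad}(\{W(t)\})=\mathrm{ad}(W(1))\circ e$, and $\mathrm{ad}(W(1))$ is a finite sum of conjugations by $\varepsilon$-covers of $f$, which induces the same map on the $K$-theory of the twisted Roe algebra as the isomorphism of Theorem~\ref{coarse equivalence} --- any two covers of $f$ induce the same $K$-theory map, by the rotation/stabilization homotopy already used above together with the passage through auxiliary nets in the proof of Theorem~\ref{coarse equivalence}. Hence $\mu_X$ factors as the composition of $f_*$, $\mu_Y$, and the inverse of the Roe-algebra isomorphism, up to the identifications above, and since $\mu_Y$ is an isomorphism by hypothesis applied to $\Gamma(Y,\mathcal{A}\otimes\mathcal{K})$, so is $\mu_X$. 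As $\Gamma(X,\mathcal{A}\otimes\mathcal{K})$ was arbitrary, $X$ satisfies the conjecture; the converse follows by interchanging $f$ and $g$.

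The step I expect to be the main obstacle is the compatibility of the square: one has to match the system of $\varepsilon_j$-covers and the continuous cover $\{W(t)\}$ used on the localization side in Lemma~\ref{homotopy} with the single isometry used on the Roe side in Proposition~\ref{easiest coarse equivalence} and Theorem~\ref{coarse equivalence}, and verify that evaluating the continuous cover at $t=1$ yields an operator implementing the Roe-algebra isomorphism up to a $K$-theoretically trivial modification. Once this bookkeeping of covers and coefficient algebras is organized, the rest of the argument is formal.
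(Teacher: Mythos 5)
Your proposal is correct and follows essentially the same route as the paper, whose proof of this corollary is simply to combine Lemma~\ref{homotopy} (coarse invariance of $\varinjlim_s K_*(C^*_L(P_s(\cdot);\Gamma))$) with Theorem~\ref{coarse equivalence} (coarse invariance of the twisted Roe algebras). The extra steps you supply --- recovering an arbitrary stable coarse $X$-algebra as the $f$-pullback of the $g$-induced $Y$-algebra via Remark~\ref{functoriarity} and Lemma~\ref{well-defined up to close}, and checking that the two isomorphisms intertwine the assembly maps through the evaluation-at-one map --- are exactly the details the paper leaves implicit, and your treatment of them is sound.
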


Next, we prove the permanence of the twisted coarse Baum--Connes conjecture with respect to any stable coarse algeras under taking subspaces.

\begin{theorem}\label{permanence for subsp.}
    Let $Y\subset X$ be metric spaces with bounded geometry. For any stable coarse 
 $Y$-algebra $\Gamma(Y,\mathcal{A}\otimes \mathcal{K})$, let $\Gamma(X,\mathcal{A}\otimes\mathcal{K})$ be the stable coarse $X$-algebra defined in \eqref{coarse alg from subspace}. Then the natural inclusion
 \begin{align*}
     C^*_L(P_s(Y);\Gamma(Y,\mathcal{A}\otimes \mathcal{K})) \to C^*_L(P_s(X);\Gamma(X,\mathcal{A}\otimes \mathcal{K}))
 \end{align*}
 induces an isomorphism
 \begin{align*}
     \varinjlim_{s}K_*(C^*_L(P_s(Y);\Gamma(Y,\mathcal{A}\otimes \mathcal{K}))) \cong \varinjlim_{s}K_*(C^*_L(P_s(X);\Gamma(X,\mathcal{A}\otimes \mathcal{K}))).
 \end{align*}
Moreover, if $X$ satisfies the twisted coarse Baum--Connes conjecture with respect to any stable coarse algebras, then so does $Y$.
\end{theorem}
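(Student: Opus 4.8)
The plan is to first establish the isomorphism on the $K$-theory of twisted localization algebras, and then combine it with the corresponding statement for twisted Roe algebras (Theorem~\ref{twist for subsp.}) to transfer the coarse Baum--Connes conjecture from $X$ to $Y$ via a five-lemma argument on the assembly maps. For the localization-algebra statement, the key observation is that $P_s(Y)$ sits inside $P_s(X)$ as a subcomplex, and the inclusion of $\Gamma(N_r(Y),\mathcal{A}\otimes\mathcal{K})$ into $\Gamma(X,\mathcal{A}\otimes\mathcal{K})$ realizes $C^*_L(P_s(Y);\Gamma(Y,\mathcal{A}\otimes\mathcal{K}))$ (up to the coarse equivalence $N_r(Y)\simeq Y$ controlled by Lemma~\ref{homotopy} and Lemma~\ref{independence of nets}) as a subalgebra of $C^*_L(P_s(X);\Gamma(X,\mathcal{A}\otimes\mathcal{K}))$.

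The heart of the argument is a \emph{Mayer--Vietoris / cutting-down} decomposition on the localization side: for each $r$, one decomposes $P_s(X)$ along the neighborhood $N_r(Y)$ and its complement, and observes that by construction of $\Gamma(X,\mathcal{A}\otimes\mathcal{K})=\varinjlim_r\Gamma(N_r(Y),\mathcal{A}\otimes\mathcal{K})$ as an inductive limit (see \eqref{coarse alg from subspace}), any operator $T$ in $\mathbb{C}_L[P_s(X);\Gamma(X,\mathcal{A}\otimes\mathcal{K})]$ has, for each $t$, all its partial-translation functions $T(t)^v$ supported near $Y$; hence $T(t)$ is itself supported in $N_r(Y)\times N_r(Y)$ for some $r$ depending only on the propagation. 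Since the propagation of $f(t)$ goes to $0$, an exhaustion-and-diagonal argument shows that the inclusion
\begin{align*}
\varinjlim_r C^*_L(P_s(N_r(Y));\Gamma(N_r(Y),\mathcal{A}\otimes\mathcal{K})) \longrightarrow C^*_L(P_s(X);\Gamma(X,\mathcal{A}\otimes\mathcal{K}))
\end{align*}
is an isomorphism, exactly as in the proof of Theorem~\ref{twist for subsp.} but carried out level-by-level in $t$. Applying Lemma~\ref{homotopy} to the coarse equivalence $q_r:N_r(Y)\to Y$ gives $\varinjlim_s K_*(C^*_L(P_s(N_r(Y));\Gamma(N_r(Y),\mathcal{A}\otimes\mathcal{K})))\cong\varinjlim_s K_*(C^*_L(P_s(Y);\Gamma(Y,\mathcal{A}\otimes\mathcal{K})))$ compatibly with the structure maps, and taking the inductive limit over $r$ (noting $K$-theory commutes with inductive limits of $C^*$-algebras) yields the desired isomorphism.

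For the ``moreover'' clause, I would assemble a commuting diagram: the twisted coarse assembly maps for $Y$ and for $X$ fit into a square whose vertical arrows are the evaluation-induced maps $e_*$ and whose horizontal arrows are the isomorphism just proved on the localization side together with the isomorphism $K_*(C^*(X;\Gamma(X,\mathcal{A}\otimes\mathcal{K})))\cong K_*(C^*(Y;\Gamma(Y,\mathcal{A}\otimes\mathcal{K})))$ coming from Theorem~\ref{twist for subsp.}. Both horizontal maps are isomorphisms and the square commutes because evaluation-at-one is natural with respect to all the inclusions and coarse equivalences involved; hence $\mu_Y$ is an isomorphism whenever $\mu_X$ is, for the given $\Gamma(Y,\mathcal{A}\otimes\mathcal{K})$. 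Since every stable coarse $Y$-algebra arises this way (it is, trivially, induced from itself via $Y\subset Y$, and more to the point the construction \eqref{coarse alg from subspace} produces the required $X$-algebra for \emph{any} choice of $\Gamma(Y,\mathcal{A}\otimes\mathcal{K})$), $Y$ satisfies the conjecture with respect to all stable coarse algebras.

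The main obstacle I anticipate is \emph{compatibility of the localization-side decomposition with the inductive limit over $r$}: one must check that the isomorphisms $C^*_L(P_s(N_r(Y));\Gamma(N_r(Y),\mathcal{A}\otimes\mathcal{K}))\hookrightarrow C^*_L(P_s(N_{r'}(Y));\Gamma(N_{r'}(Y),\mathcal{A}\otimes\mathcal{K}))$ are intertwined with the coarse-equivalence-induced maps $(q_r)_*$ up to the strongly Lipschitz homotopies of Lemma~\ref{homotopy}, so that the colimit over both $s$ and $r$ is unambiguous. This is where Conditions (3) and (4) in Definition~\ref{stable coarse algebra} are essential: they guarantee that cutting an operator by $\chi_{N_r(Y)}$ and re-indexing the Borel cover keeps one inside the twisted localization algebra, which is precisely what makes the level-wise decomposition legitimate. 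The rest of the argument is a routine diagram chase once these naturality statements are in place.
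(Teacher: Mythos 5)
There is a genuine gap at the heart of your localization-side argument. You claim that for $(T_t)_t\in\mathbb{C}_L[P_s(X);\Gamma(X,\mathcal{A}\otimes\mathcal{K})]$ each $T_t$ is supported in $N_r(Y)\times N_r(Y)$ ``for some $r$ depending only on the propagation,'' and that a level-by-level exhaustion then identifies $C^*_L(P_s(X);\Gamma(X,\mathcal{A}\otimes\mathcal{K}))$ with $\varinjlim_r C^*_L(P_s(N_r(Y));\Gamma(N_r(Y),\mathcal{A}\otimes\mathcal{K}))$. The support radius near $Y$ has nothing to do with the propagation: the condition $T_t^v\in\Gamma(X,\mathcal{A}\otimes\mathcal{K})=\varinjlim_r\Gamma(N_r(Y),\mathcal{A}\otimes\mathcal{K})$ only says that, for each fixed $t$ (and after an arbitrarily small perturbation, as in the proof of Theorem~\ref{twist for subsp.}), $T_t$ is supported in $N_{r(t)}(Y)\times N_{r(t)}(Y)$ for \emph{some} $r(t)$, and $r(t)$ may grow without bound as $t\to\infty$. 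Consequently the subalgebra of paths with support radius uniform in $t$ --- which is exactly what $\varinjlim_r C^*_L(P_s(N_r(Y));\Gamma(N_r(Y),\mathcal{A}\otimes\mathcal{K}))$ produces --- is in general a proper, non-dense subalgebra of $C^*_L(P_s(X);\Gamma(X,\mathcal{A}\otimes\mathcal{K}))$; no ``exhaustion-and-diagonal'' argument repairs this, because cutting a path down to a fixed neighborhood produces errors that are not small uniformly in $t$. This is precisely the difficulty the paper addresses by introducing the uniform twisted localization algebra $C^*_{L,\text{uniform}}$ (Definition~\ref{uniform twisted}) and proving --- not an isomorphism of algebras, but only a $K$-theory isomorphism --- that the inclusion $C^*_{L,\text{uniform}}\hookrightarrow C^*_L$ induces an isomorphism on $K_*$, via induction on the dimension of the simplicial complex with a Mayer--Vietoris argument, the zero-dimensional case resting on quasi-stability of the coefficient algebras, the evaluation-at-one result \cite[Lemma~12.4.3]{WillettYuHigherindextheoryBook}, and continuity of $K$-theory. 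Your proposal contains no substitute for this step, and it is the real content of the theorem: once one has it, the paper's argument (uniformly supported classes live on some $P_s(N_R(Y))$, then Lemma~\ref{homotopy} transports them to $Y$) is essentially what you wrote.

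The remainder of your plan is fine in outline: the ``moreover'' clause does follow by combining the localization-side isomorphism with Theorem~\ref{twist for subsp.} in a commuting square with the assembly maps, and your observation that the construction \eqref{coarse alg from subspace} applies to an arbitrary stable coarse $Y$-algebra is the right reason the conclusion holds for all such coefficients. But as written, the proof of the first (and main) assertion does not go through without the uniform-versus-non-uniform comparison described above.
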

Before proving this, we need following preliminary Definition and Lemma.
\begin{definition}\label{uniform twisted}
    Let $M$ be any metric space with bounded geometry containing a net $X$, and $Y\subset X$ a subset of $X$. Let $\Gamma(Y,\mathcal{A}\otimes \mathcal{K})$ be any stable coarse $Y$-algebra and $\Gamma(N_r(Y),\mathcal{A}\otimes\mathcal{K})$ and $\Gamma(X,\mathcal{A}\otimes\mathcal{K})$ are defined in the proof of Theorem~\ref{twist for subsp.}. We define a $C^*$-algebra $C^*_{L,\text{uniform}}(M;\Gamma(X,\mathcal{A}\otimes \mathcal{K}))$
    to be the closure of the $*$-algebra consisting all $(T_t)_t$ in $\mathbb{C}_L[M;\Gamma(X,\mathcal{A}\otimes \mathcal{K})]$ such that for any partial translation $v$ there exists $r>0$ such that $(T_t)^v\in \Gamma(N_r(Y),\mathcal{A}\otimes \mathcal{K})$ for all $t\in [1,\infty)$.
\end{definition}

\begin{lemma}
    Under the setting of Definition~\ref{uniform twisted}, if $M$ is a finite-dimensional simplicial complex, then we have an isomorphism
    \begin{align*}
        K_*(C^*_{L,\text{uniform}}(M;\Gamma(X,\mathcal{A}\otimes \mathcal{K})))\cong K_*(C^*_{L}(M;\Gamma(X,\mathcal{A}\otimes \mathcal{K})))
    \end{align*}
    induced by the natural inclusion $$C^*_{L,\text{uniform}}(M;\Gamma(X,\mathcal{A}\otimes \mathcal{K})) \to C^*_{L}(M;\Gamma(X,\mathcal{A}\otimes \mathcal{K})).$$
\end{lemma}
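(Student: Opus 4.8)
The plan is to prove the isomorphism $K_*(C^*_{L,\mathrm{uniform}}(M;\Gamma(X,\mathcal{A}\otimes\mathcal{K})))\cong K_*(C^*_L(M;\Gamma(X,\mathcal{A}\otimes\mathcal{K})))$ by a Mayer--Vietoris induction on the dimension of the simplicial complex $M$, following the standard cutting-and-pasting technique for localization algebras (as in Yu's work and in \cite{WillettYuHigherindextheoryBook}). The essential point is that both functors $M\mapsto K_*(C^*_{L,\mathrm{uniform}}(M;\Gamma(X,\mathcal{A}\otimes\mathcal{K})))$ and $M\mapsto K_*(C^*_L(M;\Gamma(X,\mathcal{A}\otimes\mathcal{K})))$ are \emph{coarsely excisive} on simplicial complexes: decomposing $M=M_1\cup M_2$ along a subcomplex $M_0=M_1\cap M_2$ gives six-term Mayer--Vietoris exact sequences, and the natural inclusion induces a morphism between these sequences. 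By the five lemma, if the comparison map is an isomorphism on $M_1$, $M_2$, and $M_0$, then it is an isomorphism on $M$. Since any finite-dimensional simplicial complex can be built up inductively from points (the $0$-skeleton) by gluing in simplices, the induction reduces to the base case where $M$ is (coarsely equivalent to) a discrete set, indeed a single point or a disjoint union of uniformly bounded pieces.

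First I would check the base case: when $M$ is $0$-dimensional, i.e. a discrete bounded-geometry space, the Rips-complex structure is trivial and the propagation condition forces elements of the localization algebra to become "diagonal" at infinity. On such a space, for a single point (or a uniformly bounded cluster) the uniform condition "there exists $r>0$ with $(T_t)^v\in\Gamma(N_r(Y),\mathcal{A}\otimes\mathcal{K})$ for all $t$" is automatically satisfied once one uses that $M$ is bounded so only finitely many partial translations are relevant and $\Gamma(X,\mathcal{A}\otimes\mathcal{K})=\varinjlim_r\Gamma(N_r(Y),\mathcal{A}\otimes\mathcal{K})$ is a direct limit; continuity of $t\mapsto T_t$ together with compactness of $[1,\infty)$ localized near each fixed compact interval lets one absorb everything into a single $r$. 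More carefully, one uses an Eilenberg-swindle / reindexing argument exactly as in the proof that $K_*(C^*_{L,0})=0$ for bounded spaces, adapted so that the swindle stays inside the uniform subalgebra. So on the base case the inclusion $C^*_{L,\mathrm{uniform}}\hookrightarrow C^*_L$ is even a homotopy equivalence after a standard reindexing, hence induces a $K$-theory isomorphism.

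Next I would set up the Mayer--Vietoris machinery. For a decomposition $M=M_1\cup M_2$ of the finite-dimensional simplicial complex along a subcomplex, one has ideals $C^*_L(M_i;\Gamma(X,\mathcal{A}\otimes\mathcal{K}))$ (operators supported near $M_i$) inside $C^*_L(M;\Gamma(X,\mathcal{A}\otimes\mathcal{K}))$, with $C^*_L(M_1;\cdots)+C^*_L(M_2;\cdots)=C^*_L(M;\cdots)$ and $C^*_L(M_1;\cdots)\cap C^*_L(M_2;\cdots)=C^*_L(M_0;\cdots)$, up to the usual flabbiness/controlled-propagation subtleties; the same holds verbatim for the uniform versions since the defining condition on $(T_t)^v$ is local in $v$ and compatible with restriction to subcomplexes. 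The resulting Mayer--Vietoris six-term sequences fit into a commuting ladder connected by the comparison maps, and the five lemma completes the inductive step once the comparison is known on lower-dimensional pieces (the skeleta) and on the bounded pieces that get glued in (the base case). One also needs that $K$-theory commutes with the relevant direct limits over scale $s$ and over the radius $r$, which is standard.

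The main obstacle I anticipate is verifying that the \emph{uniform} subalgebra genuinely satisfies the Mayer--Vietoris excision property, i.e. that cutting by characteristic functions of subcomplexes and the associated ideal/quotient decomposition interact correctly with the condition "$\exists r>0$ uniform in $t$". The danger is that gluing two uniformly-controlled pieces could require the radius $r$ to grow, destroying uniformity; one must show that because the cover $\{B_x\}$ has uniformly bounded diameter and because only boundedly many partial translations $v$ appear at any fixed propagation, the radii can be chosen uniformly, using crucially the stability Conditions (3) and (4) of Definition~\ref{stable coarse algebra} to decompose the cut-down operators $\chi_{B_x}T\chi_{B_y}$ into pieces that stay in $\Gamma(N_r(Y),\mathcal{A}\otimes\mathcal{K})$ (exactly the manipulation already used in the proof of Lemma~\ref{independence of nets}). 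Handling this uniformity bookkeeping carefully, and checking that the reindexing/swindle in the base case and the homotopies used to prove excision can all be performed inside the uniform subalgebra, is where the real work lies; the rest is the routine induction-on-skeleta argument.
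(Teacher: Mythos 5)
Your overall skeleton (induction on the dimension of $M$, Mayer--Vietoris for both the uniform and the ordinary twisted localization algebras, homotopy invariance to pass to lower-dimensional complexes, then the five lemma) is the same as the paper's argument, and your worry about excision for the uniform subalgebra is a fair point that the paper handles by noting the cut-down operators are restrictions, so the defining radius does not grow. The genuine gap is in your base case. In the $0$-dimensional step $M$ is the whole net $X$, which is unbounded, not ``a single point or a disjoint union of uniformly bounded pieces''; and the uniform condition is \emph{not} automatic there. Your absorption argument fails because $[1,\infty)$ is not compact: a path $(T_t)$ in $\mathbb{C}_L[X;\Gamma(X,\mathcal{A}\otimes\mathcal{K})]$ only gives, for each $t$, membership of $(T_t)^v$ in the closure of $\bigcup_r\Gamma(N_r(Y),\mathcal{A}\otimes\mathcal{K})$, and the required radius can tend to infinity with $t$; there is no reindexing or swindle that puts such a path into a single $\Gamma(N_r(Y),\mathcal{A}\otimes\mathcal{K})$, i.e.\ the inclusion of the uniform algebra is not an equality or an obvious homotopy equivalence. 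Concretely, the base case is the comparison of
\begin{align*}
\varinjlim_r C_{ub}\bigl([1,\infty),\Gamma(N_r(Y),\mathcal{A}\otimes\mathcal{K})\bigr)
\quad\text{and}\quad
C_{ub}\bigl([1,\infty),\Gamma(X,\mathcal{A}\otimes\mathcal{K})\bigr),
\end{align*}
and the direct limit does not commute with $C_{ub}([1,\infty),-)$ at the level of algebras.

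What the paper does instead, and what your proof is missing, is a $K$-theoretic argument: because the coefficient algebras $\Gamma(N_r(Y),\mathcal{A}\otimes\mathcal{K})$ and $\Gamma(X,\mathcal{A}\otimes\mathcal{K})$ are (quasi-)stable, evaluation at $t=1$ induces isomorphisms $K_*\bigl(C_{ub}([1,\infty),\mathcal{B})\bigr)\cong K_*(\mathcal{B})$ for these coefficients (Lemma~12.4.3 of the Willett--Yu book), and then continuity of $K$-theory under the direct limit $\Gamma(X,\mathcal{A}\otimes\mathcal{K})=\varinjlim_r\Gamma(N_r(Y),\mathcal{A}\otimes\mathcal{K})$ identifies the two sides. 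So the isomorphism in dimension zero is an equality of $K$-groups obtained through evaluation and stability, not an equality (or deformation) of the algebras themselves. If you replace your compactness/swindle paragraph with this evaluation-plus-stability argument, the rest of your induction goes through along the same lines as the paper, with the caveat that the paper's inductive step uses a specific decomposition of each top-dimensional simplex (a neighborhood of the barycenter versus its complement), both pieces being strongly Lipschitz homotopy equivalent to complexes of dimension at most $m-1$, rather than a generic skeleton-by-skeleton gluing.
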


\begin{proof}
    We prove this by the induction on the dimension $m$ of $M$. In the case when $m=0$, we may assume $M=X$ and it suffices to show that the inclusion
    \begin{align*}
        \varinjlim_r C_{ub}([1,\infty),\Gamma(N_r(Y),\mathcal{A}\otimes \mathcal{K})) \to C_{ub}([1,\infty),\Gamma(X,\mathcal{A}\otimes \mathcal{K}))
    \end{align*}
    induces an isomorphism on $K$-theory, where for any $C^*$-algebra $\mathcal{B}$, $C_{ub}([1,\infty),\mathcal{B})$ is the $C^*$-algebra of all uniformly continuous, bounded function from $[1,\infty)$ to $\mathcal{B}$. But since $\Gamma(N_r(Y),\mathcal{A}\otimes \mathcal{K}))$ and $\Gamma(X,\mathcal{A}\otimes \mathcal{K}))$ are quasi-stable, the evaluation-at-one maps
    \begin{align*}
        C_{ub}([1,\infty),\Gamma(N_r(Y),\mathcal{A}\otimes \mathcal{K})) &\to \Gamma(N_r(Y),\mathcal{A}\otimes \mathcal{K})\\ 
        C_{ub}([1,\infty),\Gamma(X,\mathcal{A}\otimes \mathcal{K})) &\to \Gamma(X,\mathcal{A}\otimes \mathcal{K}))
    \end{align*}
    induce isomorphisms on $K$-theory by \cite[Lemma~12.4.3]{WillettYuHigherindextheoryBook}. The assertion follows from the continuity of $K$-theory. Next, we assume induction hypothesis for the dimension $0,\cdots, m-1$. For each $m$-dimensional simplicial complex $\Delta$, we denote by $c(\Delta)$ the center of it. We define
    \begin{align*}
        \Delta_1:=\left\{x\in \Delta:d(x,c(\Delta))\leq \frac{1}{10}\right\},\quad \Delta_2:=\left\{x\in \Delta:d(x,c(\Delta))\geq \frac{1}{10}\right\}
    \end{align*}
    and define 
    \begin{align*}
        N_1':=\bigcup \{\Delta_1&:\Delta \text{ is an $m$-dimensional simplex of }M\}\\ &\cup \bigcup \{\Delta:\Delta \text{ is a simplex of}~M~\text{whose dimension is less than $m$}\}\\
        N_2':=\bigcup \{\Delta_2&:\Delta \text{ is an $m$-dimensional simplex of }M\}\\ &\cup \bigcup \{\Delta:\Delta \text{ is a simplex of}~M~\text{whose dimension is less than $m$}\}.
    \end{align*}
    Taking a point from each simplex, we fix a net $X'_1\subset N_1'$, $X'_2\subset N_2'$ and $X_{1,2}'\subset N'_1\cap N'_2$, respectively. Since $N_1'$, $N_2'$ and $N_1'\cap N_2'$ are strongly Lipschitz homotopic to simplicial complexes $N_1$, $N_2$ and $N_{1,2}$ whose dimensions are less than $m$, respectively through coarse equivalent maps. We denote the corresponding nets $X_1\subset N_1$, $X_2\subset N_2$ and $X_{1,2}\subset N_{1,2}$, respectively. By the same argument as Lemma~\ref{homotopy}, one can show that 
    \begin{align*}
        K_*(C^*_{L,\text{uniform}}(N_i';\Gamma(X_i',\mathcal{A}\otimes \mathcal{K}))) &\cong K_*(C^*_{L,\text{uniform}}(N_i;\Gamma(X_i,\mathcal{A}\otimes \mathcal{K}))),\\
        K_*(C^*_{L}(N_i';\Gamma(X_i',\mathcal{A}\otimes \mathcal{K}))) &\cong K_*(C^*_{L}(N_i;\Gamma(X_i,\mathcal{A}\otimes \mathcal{K})))
    \end{align*}
     for $i=1,2$ and 
    \begin{align*}
        K_*(C^*_{L,\text{uniform}}(N_1'\cap N_2';\Gamma(X_{1,2}',\mathcal{A}\otimes \mathcal{K}))) &\cong K_*(C^*_{L,\text{uniform}}(N_{1,2};\Gamma(X_{1,2},\mathcal{A}\otimes \mathcal{K})))\\
        K_*(C^*_{L}(N_1'\cap N_2';\Gamma(X_{1,2}',\mathcal{A}\otimes \mathcal{K}))) &\cong K_*(C^*_{L}(N_{1,2};\Gamma(X_{1,2},\mathcal{A}\otimes \mathcal{K}))).
    \end{align*}
    Note that $K$-theory for both $C^*_{L,\text{uniform}}(M;\Gamma(X,\mathcal{A}\otimes \mathcal{K})))$ and $C^*_{L}(M;\Gamma(X,\mathcal{A}\otimes \mathcal{K})))$ admits Mayer-Vietoris sequence by the same proof as \cite[Proposition~3.11]{Yu1997LocalizationandCoarseBaumConnes}. Therefore, the assertion follows from the five lemma and inductive hypothesis.
\end{proof}

\begin{proof}[Proof of Theorem~\ref{permanence for subsp.}]
    We construct a map 
    \begin{align*}
        K_1(C^*_{L,\text{uniform}}(P_s(X);\Gamma(X,\mathcal{A}\otimes \mathcal{K}))) \to \varinjlim_{s}K_1(C^*_L(P_s(Y);\Gamma(Y,\mathcal{A}\otimes \mathcal{K}))).
    \end{align*}
    which is the inverse of a map induced by the inclusion
     \begin{align}\label{inclusion}
     C^*_L(P_s(Y);\Gamma(Y,\mathcal{A}\otimes \mathcal{K})) \to C^*_{L,\text{uniform}}(P_s(X);\Gamma(X,\mathcal{A}\otimes \mathcal{K})).
 \end{align}
 Take any unitary in the unitaization of $C^*_{L,\text{uniform}}(P_s(X);\Gamma(X,\mathcal{A}\otimes \mathcal{K}))$. We may assume that there exists $R\geq 0$ and a scalar $\lambda\in \mathbb{C}$ such that $u_t-\lambda$ is supported in $(P_s(N_R(Y)))\times (P_s(N_R(Y)))$ for every $t$. Therefore we can regard $[u_t]$ as an element in $K_*(C^*_L(P_s(N_R(Y));\Gamma(Y,\mathcal{A}\otimes \mathcal{K})))$. Since we have an isomorphism 
 \begin{align*}
     \varinjlim_s K_*(C^*_L(P_s(N_R(Y));\Gamma(Y,\mathcal{A}\otimes \mathcal{K})))\cong \varinjlim_s K_*(C^*_L(P_s(Y));\Gamma(Y,\mathcal{A}\otimes \mathcal{K})))
 \end{align*}
 by Lemma~\ref{homotopy}, we obtain an element in $\varinjlim_s K_*(C^*_L(P_s(Y));\Gamma(Y,\mathcal{A}\otimes \mathcal{K})))$. It is obvious that this is an inverse to the map induced by \eqref{inclusion}. 
\end{proof}

\subsection{Family of subspaces}
In this subsection, for a given metric space $X$ with bounded geometry and a family of subspaces $\{Y^{(j)}\}_j$ of $X$, we study the connection between the twisted coarse Baum--Connes conjecture of the family $\{Y^{(j)}\}_j$ and that of the space $X$.

We first realize the $K$-theory of twisted Roe algebras of $\bigsqcup_j Y^{(j)}$ as that of $X$ by using a suitable coarse algebra. We denote by $\Tilde{d}$ the distance on $\bigsqcup_j Y^{(j)}$ defined  by
\begin{align*}
    \Tilde{d}(x,y)=\left\{\begin{array}{cc}
        d(x,y), & \mbox{if~} x, y\in Y^{(j)} \text{ for some }j,   \\
         \infty, & \text{otherwise.}
    \end{array} \right.
\end{align*}
We view $(\bigsqcup_j Y^{(j)},\Tilde{d})$ as a subspace of $\bigsqcup_j X$ (the distance between different copies of $X$ is taken to be infinity again). Assume that $\Gamma(\bigsqcup_j Y^{(j)},\mathcal{A}\otimes \mathcal{K})$ is a stable coarse $\bigsqcup_j Y^{(j)}$-algebra.
By Theorem \ref{twist for subsp.}, we have that
\begin{align*}
    C^*\left(\bigsqcup_j X;\Gamma\left(\bigsqcup_j X,\mathcal{A}\otimes \mathcal{K}\right)\right) \cong C^*\left(\bigsqcup_j Y^{(j)};\Gamma\left(\bigsqcup_j Y^{(j)},\mathcal{A}\otimes \mathcal{K}\right)\right),
\end{align*}
where $\Gamma(\bigsqcup_j X,\mathcal{A}\otimes \mathcal{K})$ is constructed from the coarse  $\bigsqcup Y^{(j)}$-algebra $\Gamma(\bigsqcup_j Y^{(j)};\mathcal{A}\otimes \mathcal{K})$ and the inclusion $\bigsqcup Y^{(j)}\subset \bigsqcup X$ by the formula \eqref{coarse alg from subspace}. 

\begin{remark}
Let $\mathcal{A}$ be a $C^*$-algebra, $M$ a metric space with bounded geometry and a countable dense subset $M_0$ and a net $X\subset M$.
    For isometries $V:\mathcal{H}_\mathcal{A}\to \ell^2\otimes \mathcal{H}_\mathcal{A}\cong \mathcal{H}_\mathcal{A},~ \xi\mapsto (\xi,0,0,\cdots)$, and $W:\ell^2\otimes \mathcal{H}_\mathcal{A}\cong \mathcal{H}_\mathcal{A}\to \ell^2\otimes\mathcal{H}_\mathcal{A},~ \xi\mapsto (\xi,0,0,\cdots)$, we denote the isometries induced on the geometric modules
    \begin{align*}
        \Tilde{V}:\ell^2(M_0,\mathcal{H}_\mathcal{A}) &\to \ell^2(M_0,\mathcal{H}_\mathcal{A}),\\
        \Tilde{W}:\ell^2(M_0,\ell^2\otimes\mathcal{H}_\mathcal{A}) &\to \ell^2(M_0,\ell^2\otimes\mathcal{H}_\mathcal{A}).
    \end{align*}
    For any stable coarse $X$-algebra $\Gamma(X,\mathcal{A}\otimes \mathcal{K})$, the adjoint by $\Tilde{V}$ and $\Tilde{W}$ can be restricted to the twisted Roe algebra $C^*(M,\Gamma(X,\mathcal{A}\otimes \mathcal{K}))$ by the fourth condition of Definition~\ref{stable coarse algebra} and the induced map on $K$-theory is the identity map by the same argument in the proof as \cite[Lemma 5.1.12]{WillettYuHigherindextheoryBook}.
\end{remark}

\begin{lemma}\label{uniform coarse BaumConnes of subspaces}
Let $M$ be a metric space with bounded geometry and fix a net $X\subset M$.
    Given a coarse $\bigsqcup_j X$-algebra $\Gamma(\bigsqcup_j X,\mathcal{A}\otimes \mathcal{K})$, we define a new stable coarse $X$-algebra with respect to $\ell^{\infty}(\mathbb{N},\mathcal{A}\otimes \mathcal{K} )\otimes\mathcal{K}$ as
    \begin{align*}
        \Gamma(X;\ell^{\infty}(\mathbb{N},\mathcal{A}\otimes\mathcal{K})\otimes \mathcal{K}):=\left\{f\in \ell^{\infty}(X,\ell^{\infty}(\mathbb{N},\mathcal{A}\otimes\mathcal{K})\otimes\mathcal{K}):\left(\psi(f_{\bullet}(j))\right)_j\in \Gamma\left(\bigsqcup_j X,\mathcal{A}\otimes \mathcal{K}\right)\right\},
    \end{align*}
    where $\psi:\mathcal{A}\otimes \mathcal{K}\otimes\mathcal{K}\xrightarrow{\cong}\mathcal{A}\otimes\mathcal{K}$ is the fixed isomorphism.
    Then we have an isomorphism for Roe algebras and localization algebras on $K$-theory:
    \begin{align*}
       K_*\left( C^*\left(\bigsqcup_j M;\Gamma\left(\bigsqcup_j X,\mathcal{A}\otimes \mathcal{K}\right)\right)\right)&\cong K_*\left(C^*\left(M;\Gamma(X;\ell^{\infty}(\mathbb{N},\mathcal{A}\otimes\mathcal{K})\otimes \mathcal{K})\right)\right)\\
        K_*\left( C_{L}^*\left(\bigsqcup_j M;\Gamma\left(\bigsqcup_j X,\mathcal{A}\otimes \mathcal{K}\right)\right)\right)&\cong K_*\left(C_{L}^*\left(M;\Gamma(X;\ell^{\infty}(\mathbb{N},\mathcal{A}\otimes\mathcal{K})\otimes \mathcal{K})\right)\right).
    \end{align*}
\end{lemma}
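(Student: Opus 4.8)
The statement asserts a $K$-theory isomorphism between the (twisted) Roe algebra of the disjoint union $\bigsqcup_j M$, with coefficients in the coarse algebra $\Gamma(\bigsqcup_j X, \mathcal{A}\otimes\mathcal{K})$, and the (twisted) Roe algebra of a single copy $M$ with coefficients in the "bundled" coarse algebra $\Gamma(X;\ell^\infty(\mathbb{N},\mathcal{A}\otimes\mathcal{K})\otimes\mathcal{K})$. My plan is to construct an explicit $*$-isomorphism at the level of the Roe algebras themselves (and likewise the localization algebras), not merely a $K$-theory map, by identifying the two geometric modules and checking that the twisting conditions match up under this identification.

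\textbf{Step 1: identify the geometric modules.} Write $\bigsqcup_j M$ with countable dense subset $\bigsqcup_j M_0$; then $\ell^2(\bigsqcup_j M_0, \mathcal{H}_{\mathcal{A}}) \cong \bigoplus_j \ell^2(M_0, \mathcal{H}_{\mathcal{A}}) \cong \ell^2(M_0, \ell^2(\mathbb{N})\otimes\mathcal{H}_{\mathcal{A}})$, where the last identification uses that the $j$-th copy of $M_0$ is a fixed copy of $M_0$. An operator $T$ on the first module, having finite propagation and locally compact, has matrix entries $T_{xy}$ which vanish unless $x,y$ lie in the same copy (since the distance between distinct copies is infinite); so $T$ is a "block-diagonal over $j$" family $(T^{(j)})_j$ of operators on $\ell^2(M_0,\mathcal{H}_{\mathcal{A}})$, uniformly bounded, uniformly of finite propagation and locally compact. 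Under the second identification this family is exactly an operator on $\ell^2(M_0, \ell^2(\mathbb{N})\otimes\mathcal{H}_{\mathcal{A}})$ which is ``diagonal'' in the $\ell^2(\mathbb{N})$ variable — equivalently, an $\ell^\infty(\mathbb{N})$-valued, hence $(\ell^\infty(\mathbb{N},\mathcal{A}\otimes\mathcal{K})\otimes\mathcal{K})$-valued, matrix over $M_0$. After the fixed $*$-isomorphism $\psi$ absorbing one $\mathcal{K}$, this is precisely an element of the Roe algebra $C^*(M,\ell^\infty(\mathbb{N},\mathcal{A}\otimes\mathcal{K})\otimes\mathcal{K})$. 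So as a first approximation the two Roe algebras sit inside a common ambient algebra; I would make this a $*$-isomorphism $\Phi$ of the ambient (untwisted) Roe algebras.

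\textbf{Step 2: match the twisting conditions.} It remains to check that $\Phi$ carries $C^*(\bigsqcup_j M;\Gamma(\bigsqcup_j X,\mathcal{A}\otimes\mathcal{K}))$ isomorphically onto $C^*(M;\Gamma(X;\ell^\infty(\mathbb{N},\mathcal{A}\otimes\mathcal{K})\otimes\mathcal{K}))$. Fix a Borel cover $\{B_x\}_{x\in X}$ of $M$; then $\{B_x^{(j)}\}$ (the copies in $\bigsqcup_j M$) is a Borel cover of $\bigsqcup_j M$ with net $\bigsqcup_j X$. A partial translation $w$ on $\bigsqcup_j X$ again cannot move points between copies, so $w$ is a uniformly bounded family $(v_j)_j$ of partial translations on $X$ with a common bound. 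For $T = (T^{(j)})_j$, one computes $T^w(x^{(j)}) = (T^{(j)})^{v_j}(x)$. Comparing with the definition of $\Gamma(X;\ell^\infty(\mathbb{N},\mathcal{A}\otimes\mathcal{K})\otimes\mathcal{K})$ — whose defining condition is exactly that $(\psi(f_\bullet(j)))_j \in \Gamma(\bigsqcup_j X,\mathcal{A}\otimes\mathcal{K})$ — one sees the twisting conditions on the two sides correspond literally, once I address the (routine) point that a single partial translation on $X$ applied simultaneously in every copy is a special partial translation $w$ on $\bigsqcup_j X$, and conversely an arbitrary $w=(v_j)_j$ is handled by the stability conditions (3), (4) of Definition~\ref{stable coarse algebra} together with the decomposition-into-finitely-many-partial-translations trick used repeatedly above. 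The localization-algebra statement follows by applying the same $\Phi$ parametrically in $t\in[1,\infty)$: block-diagonality and the propagation-to-zero condition are preserved, and $\Phi$ restricts to an isomorphism $C^*_L(\bigsqcup_j M;\Gamma(\bigsqcup_j X,\mathcal{A}\otimes\mathcal{K})) \cong C^*_L(M;\Gamma(X;\ell^\infty(\mathbb{N},\mathcal{A}\otimes\mathcal{K})\otimes\mathcal{K}))$.

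\textbf{The main obstacle.} The conceptual content is small; the work is in Step 2, namely checking that a general partial translation $w$ on $\bigsqcup_j X$ — not of the ``uniform'' form $(v,v,v,\dots)$ — still produces $T^w\in \Gamma(X;\ell^\infty(\mathbb{N},\mathcal{A}\otimes\mathcal{K})\otimes\mathcal{K})$, i.e.\ that this algebra really is closed under all partial translations of $X$ and not just the diagonal ones. This is where the stability hypotheses (conditions (3) and (4)) on $\Gamma(\bigsqcup_j X,\mathcal{A}\otimes\mathcal{K})$ enter essentially: one writes $w$ as a ``twist'' of the diagonal translation by an $\ell^\infty(\mathbb{N})$-valued family of coordinate permutations, absorbs the permutation part using condition (3) (multiplication in the $B(\mathcal{H})$-direction) and condition (4) (forming matrices / absorbing $\mathcal{K}$), exactly as in the proofs of Lemma~\ref{independence of nets} and the surrounding lemmas. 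I expect no genuinely new difficulty beyond bookkeeping, but this closure check is the crux and must be written carefully; the rest (block-diagonality, local compactness, uniform propagation bounds, passing to completions, the parametrized version for localization algebras) is routine.
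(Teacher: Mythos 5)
Your Step~1 contains the gap. The identification $\ell^2\bigl(\bigsqcup_j M_0,\mathcal{H}_{\mathcal{A}}\bigr)\cong\ell^2\bigl(M_0,\ell^2(\mathbb{N})\otimes\mathcal{H}_{\mathcal{A}}\bigr)$ does not carry the Roe algebra of $\bigsqcup_j M$ into, let alone onto, $C^*\bigl(M,\ell^{\infty}(\mathbb{N},\mathcal{A}\otimes\mathcal{K})\otimes\mathcal{K}\bigr)$, because the local compactness conditions on the two sides do not match. On the disjoint-union side, locality constrains each block separately: over a bounded $B\subset M$ the compressions $\bigl(\chi_B T^{(j)}\chi_B\bigr)_j$ form an arbitrary bounded family of elements of $\mathcal{A}\otimes\mathcal{K}$, sitting diagonally in the $\ell^2(\mathbb{N})$ leg; such an $\ell^{\infty}$-family of compacts is in general \emph{not} an element of $\mathcal{B}\otimes\mathcal{K}$ with $\mathcal{B}=\ell^{\infty}(\mathbb{N},\mathcal{A}\otimes\mathcal{K})\otimes\mathcal{K}$ (think of rank-one projections onto an orthonormal sequence: the diagonal family is not compact in any extra tensor leg). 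Conversely, a general element of $C^*\bigl(M;\Gamma(X,\ell^{\infty}(\mathbb{N},\mathcal{A}\otimes\mathcal{K})\otimes\mathcal{K})\bigr)$ has local blocks in $\mathcal{B}\otimes\mathcal{K}$ which are not diagonal in $j$ in any sense visible to your $\Phi$. So there is no $*$-isomorphism of the ambient untwisted Roe algebras (nor of the twisted ones) arising from your module identification; the statement is genuinely only a $K$-theory isomorphism, and an argument that tries to prove it as an algebra isomorphism by this route fails at the first step.

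The paper's proof accepts this mismatch and instead constructs two corner-type $*$-homomorphisms: $\alpha\bigl((T^{(j)})\bigr)_{x,y}:=(j\mapsto T^{(j)}_{x,y})\otimes e_{1,1}$, where the compression to the rank-one corner $e_{1,1}$ is exactly what restores local compactness in the extra $\mathcal{K}$-leg, and $\beta(S)^{(j)}_{x,y}:=\psi(S_{x,y}(j))$. Neither map is surjective, and the compositions $\beta\circ\alpha$ and $\alpha\circ\beta$ are not the identity but are conjugations by the non-unitary isometries $\Tilde{V}$ and $\Tilde{W}$ of the remark preceding the lemma; condition (4) of Definition~\ref{stable coarse algebra} guarantees these conjugations preserve the twisted algebras and induce the identity on $K$-theory, so $\alpha_*$ and $\beta_*$ are mutually inverse isomorphisms, and the same maps applied pointwise in $t$ handle the localization algebras. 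Your Step~2 (partial translations on $\bigsqcup_j X$ are families $(v_j)_j$ on $X$, and the twisting conditions correspond) is correct and is implicitly what one checks to see that $\alpha$ and $\beta$ land in the twisted algebras, but it is not the crux; the crux is precisely the point your Step~1 skips, namely replacing the claimed isomorphism by a pair of corner embeddings that are inverse to each other only after passing to $K$-theory via stability.
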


\begin{proof} 
Define $*$-homomorphisms 
    \begin{align*}
        \alpha:&C^*\left(\bigsqcup_j X;\Gamma\left(\bigsqcup_j X,\mathcal{A}\otimes \mathcal{K}\right)\right) \to C^*\left(X;\Gamma(X,\ell^{\infty}(\mathbb{N},\mathcal{A}\otimes\mathcal{K})\otimes \mathcal{K})\right),\\
        \beta:&C^*\left(X;\Gamma(X,\ell^{\infty}(\mathbb{N},\mathcal{A}\otimes\mathcal{K})\otimes \mathcal{K})\right) \to C^*\left(\bigsqcup_j X;\Gamma\left(\bigsqcup_j X,\mathcal{A}\otimes \mathcal{K}\right)\right)
    \end{align*}
    by
    \begin{align*}
        \alpha((T^{(j)}))_{x,y} &:=(j\mapsto T^{(j)}_{x,y})\otimes e_{1,1}\\
        \beta(S)^{(j)}_{x,y}&:=\psi(S_{x,y}(j)),
    \end{align*}
    where $e_{1,1}\in \mathcal{K}$ is any rank one projection.
The compositions $\beta\circ\alpha$ and $\alpha \circ \beta$ are the adjoints by $\Tilde{V}$ and $\Tilde{W}$ in the previous remark, respectively, and both induce isomorphisms on $K$-theory. The same proof works for localization algebras.
\end{proof}

Using the above lemma, we are able to reduce the twisted coarse Baum--Connes conjecture with respect to any stable coarse algebras for $\bigsqcup X$ to that of $X$.

\section{Filtered $C^*$-algebra and controlled $K$-theory}

In this section, we recall the concepts of filtered $C^*$-algebras and controlled $K$-theory from \cite{GuentnerWillettYu2024DynamicalComplexity}. Using these techniques, we obtain a controlled Mayer--Vietoris argument for the twisted coarse Baum--Connes conjecture.  

In order to use $C^*$-algebras to encode geometry of metric spaces, we need the concept of filtration. 
\begin{definition}
    A filtration on a $C^*$-algebra $A$ is a collection of self-adjoint subspaces $(A_r)_{r\geq 0}$ of $A$ indexed by the non-negative real numbers with the following properties:
    \begin{enumerate}[(1)]
        \item if $r_1\leq r_2$, then $A_{r_1}\subset A_{r_2}$;

        \item for all $r_1,r_2$, we have $A_{r_1}\cdot A_{r_2}\subset A_{r_1\cdot r_2}$;

        \item the union $\bigcup_{r\geq 0} A_r$ is dense in $A$.
        
    \end{enumerate}
\end{definition}
In the case of Roe algebras, the filtration is defined by propagations. We apply this framework to the obstruction algebras $C^*_{L,0}(M;\Gamma(X,\mathcal{A}))$ defined in Definition~\ref{twisted coarse baum--connes}. For $r>0$, we define
\begin{align*}
    C^*_{L,0}(M;\Gamma(X,\mathcal{A}\otimes \mathcal{K}))_r:=\{f\in C^*_{L,0}(M;\Gamma(X,\mathcal{A}\otimes \mathcal{K})):\text{propagation}(f(t))<r~\forall t\in [1,\infty)\}.
\end{align*}

Next, we shall recall the quantitative $K$-theory introduced by G. Yu \cite{Yu1998NovikoforFiniteAsymptoticDimension} to prove the coarse Baum-Connes conjecture for metric spaces with finite asymptotic dimension. 
\begin{definition}\label{quasiprojection}
Let $A$ be a $C^*$-algebra. A \emph{quasi-projection} in $A$ is an element $p \in A$ such that $p = p^*$ and $\|p^2 - p\| < \frac{1}{8}$. 

If $S$ is a self-adjoint subspace of $A$, write $\mathrm{M}_n(S)$ for the matrices in $\mathrm{M}_n(A)$ with all entries coming from $S$, and let $P^{1/8}_n(S)$ denote the collection of quasi-projections in $\mathrm{M}_n(S)$.

Let $\chi = \chi_{(1/2, \infty)}$ be the characteristic function of the interval $(1/2, \infty)$. Then $\chi$ is continuous on the spectrum of any quasi-projection, and thus there is a well-defined map:
\[
\kappa : P^{1/8}_n(S) \to P_n(A), \quad p \mapsto \chi(p),
\]
where $P_n(A)$ denotes the projections in $\mathrm{M}_n(A)$.
\end{definition}

\begin{definition}
Let $A$ be a non-unital $C^*$-algebra with a filtration $(A_r)_r$. Let $\Tilde{A}_r$ denote the subspace $A_r + \mathbb{C}1$ of $\Tilde{A}$.

Using the inclusions
\[
P^{1/8}_n(\Tilde{A}_r) \ni p \mapsto
\begin{pmatrix}
p & 0 \\
0 & 0
\end{pmatrix}
\in P^{1/8}_{n+1}(\Tilde{A}_r),
\]
we define the union
\[
P^{1/8}_{\infty}(\Tilde{A}_r) := \bigcup_{n=1}^\infty P^{1/8}_n(\Tilde{A}_r).
\]

Let $C((0,1], \Tilde{A}_r)$ denote the self-adjoint subspace of the $C^*$-algebra $C((0,1], \Tilde{A})$ consisting of continuous functions with values in $\Tilde{A}_r$.

Define an equivalence relation on $P^{1/8}_{\infty}(\Tilde{A}_r) \times \mathbb{N}$ by declaring $(p, m) \sim (q, n)$ if there exists a positive integer $k$ and an element $h \in P^{1/8}_{\infty}(C((0,1], \Tilde{A}_r))$ such that
\[
h(0) =
\begin{pmatrix}
p & 0 \\
0 & 1_{n+k}
\end{pmatrix}, \quad
h(1) =
\begin{pmatrix}
q & 0 \\
0 & 1_{m+k}
\end{pmatrix}.
\]
For $(p, m) \in P^{1/8}_{\infty}(\Tilde{A}_r) \times \mathbb{N}$, denote by $[p, m]$ its equivalence class under $\sim$.

Let now $\rho: M_n(\Tilde{A}_r) \to M_n(\mathbb{C})$ be the restriction to $M_n(\Tilde{A})$ of the map induced on matrices by the canonical unital $*$-homomorphism $\rho: \Tilde{A}_r \to \mathbb{C}$ with kernel $A$.

Finally, define
\[
K^{r,1/8}_0(A) := \left\{ [p, m] \in P_{1/8}^\infty(\Tilde{A}_r) \times \mathbb{N} \, \middle| \, \mathrm{rank}(\kappa(\rho(p))) = m \right\} / \sim.
\]
The set $K^{r,1/8}_0(A)$ is equipped with an operation defined by
\[
[p, m] + [q, n] := \left[
\begin{pmatrix}
p & 0 \\
0 & q
\end{pmatrix}, m+n
\right].
\]
\end{definition}

\begin{definition}\label{quasiunitary}
    Let $A$ be a unital $C^*$-algebra. A \emph{quasi-unitary} in $A$ is an element $u \in A$ such that
\[
\|1 - uu^*\| < \tfrac{1}{8} \quad \text{and} \quad \|1 - u^*u\| < \tfrac{1}{8}.
\]

If $S$ is a self-adjoint subspace of $A$ containing the unit, write $U^{1/8}_n(S)$ for the collection of quasi-unitaries in $M_n(S)$.

Note that since $\|1 - u^*u\| < \tfrac{1}{8} < 1$, the element $u^*u$ is invertible. Hence, there is a well-defined map
\[
\kappa : U^{1/8}_n(S) \to U_n(A), \quad u \mapsto u(u^*u)^{-1/2},
\]
where $U_n(A)$ denotes the group of unitaries in $M_n(A)$.

\end{definition}

\begin{definition}
Let $A$ be a non-unital $C^*$-algebra with a filtration $(A_r)_r$.
Using the inclusions
\[
U^{1/8}_n(\Tilde{A}_r) \ni u \mapsto
\begin{pmatrix}
u & 0 \\
0 & 1
\end{pmatrix}
\in U^{1/8}_{n+1}(\Tilde{A}_r),
\]
we may define the union
\[
U^{1/8}_{\infty}(\Tilde{A}_r) := \bigcup_{n=1}^\infty U^{1/8}_n(\Tilde{A}_r).
\]

Define an equivalence relation on $\mathcal{U}_{1/8}^\infty(\Tilde{A}_r)$ by declaring $u \sim v$ if there exists an element
\[
h \in U^{1/8}_\infty(C((0,1], \Tilde{A}_r))
\quad \text{such that} \quad h(0) = u \quad \text{and} \quad h(1) = v.
\]
For $u \in U^{1/8}_\infty(\Tilde{A}_r)$, denote by $[u]$ its equivalence class under $\sim$.
Finally, define
\[
K^{r,1/8}_1(A) := U^{1/8}_\infty(\Tilde{A}_r) \big/ \sim.
\]
The set $K^{1/8}_1(A)$ is equipped with a binary operation defined by
\[
[u] + [v] := \left[
\begin{pmatrix}
u & 0 \\
0 & v
\end{pmatrix}
\right].
\]

\end{definition}
The advantage of quantitative $K$-theory is that it is more computable compared with the usual operator $K$-theory. It also has connections with the usual $K$-theory.  

\begin{definition}
Let $A$ be a non-unital $C^*$-algebra, and let $S$ be a self-adjoint subspace of $A$. Let $\kappa$ be one of the maps from Definitions~\ref{quasiprojection} or Definition~\ref{quasiunitary} (it will be clear from context which is meant).

Define maps:
\[
c_0 : K^{r,1/8}_0(A) \longrightarrow K_0(A), \quad [p, m] \mapsto [\kappa(p)] - [1_m],
\]
\[
c_1 : K^{r,1/8}_1(A) \longrightarrow K_1(A), \quad [u] \mapsto [\kappa(u)],
\]
\[
c := c_0 \oplus c_1 : K^{r,1/8}_*(A) \longrightarrow K_*(A).
\]

We call $c_0$, $c_1$, and $c$ the \emph{comparison maps}.

\end{definition}

The map $c$ is asymptotically isomorphic in the following sense:

\begin{proposition}[Proposition 4.9~\cite{GuentnerWillettYu2024DynamicalComplexity}]
Let $A$ be a non-unital $C^*$-algebra with a filtration $(A_r)_r$. Then for any $x\in K_*(A)$, there exists $r>0$ and an element $y\in K^{r,\frac{1}{8}}(A)$ such that $c(y)=x$. Moreover if $x,y \in K_*^{s,\frac{1}{8}}(A)$ for some $s>0$ satisfies $c(x)=c(y)$, then there exists $r>0$ such that $x=y$ in $K_*^{r,\frac{1}{8}}(A)$.
\end{proposition}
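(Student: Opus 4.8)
The plan is to run the classical density-plus-functional-calculus argument showing that the comparison maps of controlled $K$-theory are ``asymptotically isomorphic'' — this is precisely \cite[Proposition~4.9]{GuentnerWillettYu2024DynamicalComplexity} — the single structural input being that $\bigcup_{r\geq 0}\widetilde{A}_r$ is dense in $\widetilde{A}$. I treat asymptotic surjectivity and asymptotic injectivity separately, and in each the $K_1$ case runs parallel to the $K_0$ case with ``quasi-projection'' replaced by ``quasi-unitary'', so I describe only $K_0$.

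\emph{Surjectivity.} Given $x\in K_0(A)$, write $x=[e]-[1_m]$ for a genuine projection $e\in M_n(\widetilde{A})$ with $\mathrm{rank}(\rho(e))=m$. By density choose a self-adjoint $p\in M_n(\widetilde{A}_r)$, for some $r>0$, with $\|p-e\|$ as small as we like; then $\|p^2-p\|\leq\|p-e\|(\|p\|+\|e\|+1)$ can be forced below $1/8$, so $p\in P^{1/8}_n(\widetilde{A}_r)$, while $\|\kappa(p)-e\|\leq\|\chi(p)-p\|+\|p-e\|<1$ gives $[\kappa(p)]=[e]$ in $K_0(A)$, and applying $\rho$ (together with local constancy of rank on quasi-projections) gives $\mathrm{rank}(\kappa(\rho(p)))=m$. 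Hence $[p,m]\in K^{r,1/8}_0(A)$ and $c_0([p,m])=[\kappa(p)]-[1_m]=x$.

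\emph{Injectivity.} Suppose $x=[p,m]$ and $y=[q,n]$ lie in $K^{s,1/8}_0(A)$ with $c_0(x)=c_0(y)$ (this hypothesis, suppressed in the displayed statement, is what is actually needed); thus $[\kappa(p)\oplus 1_n]=[\kappa(q)\oplus 1_m]$ in $K_0(\widetilde{A})$, so after padding with $1_k$ and $0_j$ there is a homotopy $H\colon[0,1]\to P_N(\widetilde{A})$ of honest projections from $\mathrm{diag}(\kappa(p),1_{n+k},0_j)$ to $\mathrm{diag}(\kappa(q),1_{m+k},0_j)$. Quantise $H$: by uniform continuity and density, take a fine partition $0=t_0<\cdots<t_\ell=1$ and self-adjoint $a_i\in M_N(\widetilde{A}_R)$, for one large $R$, with $\|a_i-H(t_i)\|$ as small as desired, and join consecutive ones by $s\mapsto(1-s)a_i+sa_{i+1}$; since $\|a_i-a_{i+1}\|$ and each $\|a_i^2-a_i\|$ are small, the concatenation is a path in $P^{1/8}_N(\widetilde{A}_R)$. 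Then splice on, at the two ends, the straight segments from $\mathrm{diag}(p,1_{n+k},0_j)$ to $a_0$ and from $a_\ell$ to $\mathrm{diag}(q,1_{m+k},0_j)$, first replacing $\kappa(p)=\chi(p)$ and $\kappa(q)$ by polynomial approximations lying in a fixed filtration level and using that $p$ commutes with $\chi(p)$ and that $u\mapsto u(1-u)$ is monotone near $0$ and near $1$ to keep these segments inside the $1/8$-window. The resulting $h\in P^{1/8}_\infty(C((0,1],\widetilde{A}_r))$ witnesses $(p,m)\sim(q,n)$, so $x=y$ in $K^{r,1/8}_0(A)$ for this $r\geq s$. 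The $K_1$ statement is identical with homotopies of quasi-unitaries and interpolation $s\mapsto(1-s)a_i+sa_{i+1}$ between nearby quasi-unitaries.

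The main obstacle is purely quantitative: controlling the tolerance $1/8$ \emph{uniformly} along all interpolated and spliced paths. The interior segments are harmless, since each $a_i$ may be chosen with $\|a_i^2-a_i\|$ (resp.\ $\|1-a_i^*a_i\|$ in the $K_1$ case) as small as we wish; the delicate part is the two spliced ends, where $p$ and $q$ are prescribed quasi-projections whose defect $\|p^2-p\|$ may already be close to $1/8$ and cannot be reduced, so one must verify the straight path to $\chi(p)$ never leaves the window, and one must carry the rank/$\mathbb{N}$-component and all stabilisations through every step. Once these constants are arranged the rest is routine; we refer to \cite[Proposition~4.9]{GuentnerWillettYu2024DynamicalComplexity} for the complete argument.
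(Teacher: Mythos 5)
This proposition is not proved in the paper at all: it is imported verbatim from \cite{GuentnerWillettYu2024DynamicalComplexity}, and your density-plus-spectral-interpolation sketch (approximate a genuine projection/unitary by a self-adjoint/quasi-unitary element in some $\widetilde{A}_r$ for surjectivity; discretise a homotopy of genuine projections into straight-line segments of quasi-projections in a single filtration level, splicing on the segments from $p$ to $\chi(p)$, for injectivity) is essentially the argument given there, so it is correct in approach. You also rightly observe that the displayed statement suppresses the hypothesis $c(x)=c(y)$ in the second half, which is indeed what the asymptotic injectivity assertion requires.
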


In the following, we shall recall some techniques for the computation of quantitative $K$-theory formulated in \cite{GuentnerWillettYu2024DynamicalComplexity}. Those techniques are crucial to reduce to computation of the $K$-theory of twisted Roe algebras of relative hyperbolic group to that of peripheral subgroups.

\begin{definition}\label{filtered ideal}
 Let $A$ be a filtered $C^*$-algebra, and $I$ be a $C^*$-ideal in $A$ equipped with its own filtration. We say that $I$ is a \textit{filtered ideal} of $A$ if for any $r \geq 0$, $I_r \subset A_r$, and if for any $r, s \geq 0$, $A_s \cdot I_r \cup I_r \cdot A_s \subset I_{s+r}$.
\end{definition}

Now, we recall the following techniques to compute the quantitative $K$-theory. 

\begin{definition}\label{definition of excisive}
Let $(I^\omega, J^\omega; A^\omega)_{\omega \in \Omega}$ be an indexed set of triples, where each $A^\omega$ is a filtered $C^*$-algebra, and each $I^\omega$ and $J^\omega$ is a filtered ideal in $A^\omega$. Give each stabilization $A^\omega \otimes \mathcal{K}$, $I^\omega \otimes \mathcal{K}$ and $J^\omega \otimes \mathcal{K}$ the filtration 
\begin{align*}
    (A^\omega \otimes \mathcal{K})_r=A^\omega_r \otimes \mathcal{K},~(I^\omega \otimes \mathcal{K})_r=I^\omega_r \otimes \mathcal{K},~(J^\omega \otimes \mathcal{K})_r=J^\omega_r \otimes \mathcal{K}.
\end{align*}

\medskip

The collection $(I^\omega, J^\omega; A^\omega)_{\omega \in \Omega}$ of pairs of ideals and $C^*$-algebras containing them is \textit{uniformly excisive} if for any $r_0, m_0 \geq 0$ and $\varepsilon > 0$, there are $r \geq r_0$, $m \geq 0$, and $\delta > 0$ such that:

\begin{itemize}
    \item[(i)] for any $\omega \in \Omega$ and any $a \in (A^\omega \otimes \mathcal{K})_{r_0}$ of norm at most $m_0$, there exist elements $b \in (I^\omega \otimes \mathcal{K})_r$ and $c \in (J^\omega \otimes \mathcal{K})_r$ of norm at most $m$ such that $\|a - (b + c)\| < \varepsilon$;
    
    \item[(ii)] for any $\omega \in \Omega$ and any $a \in I^\omega \otimes \mathcal{K} \cap J^\omega \otimes \mathcal{K}$ such that
    \[
    d(a, (I^\omega \otimes \mathcal{K})_{r_0}) < \delta \quad \text{and} \quad d(a, (J^\omega \otimes \mathcal{K})_{r_0}) < \delta,
    \]
    there exists $b \in I^\omega_r \otimes \mathcal{K} \cap J^\omega_r \otimes \mathcal{K}$ such that $\|a - b\| < \varepsilon$.
\end{itemize}
\end{definition}

\begin{theorem}[Proposition 7.7,~\cite{GuentnerWillettYu2024DynamicalComplexity}]\label{mayer-vietoris}
Let $\{(I^{\omega}, J^{\omega}; A^{\omega})\}_{\omega \in \Omega}$ be a uniformly excisive collection, where all algebras and ideals are non-unital. Then, for any $r_0 \geq 0$, there exist $r_1, r_2 \geq r_0$ satisfying the following:

For each $\omega \in \Omega$ and each $x \in K^{r_0,1/8}_*(A^\omega)$, there exists an element
\[
\partial_c (x) \in K^{r_1,1/8}_*(I^{\omega} \cap J^{\omega})
\]
such that if $\partial_c (x)= 0$, then there exist elements
\[
y \in K^{r_2,1/8}_*(I^{\omega}), \quad z \in K^{r_2,1/8}_*(J^{\omega})
\]
such that
\[
x = y + z \quad \text{in } K^{r_2,1/8}_*(A^\omega).
\]
Furthermore, the boundary map $\partial_c$ has the following naturality property:

Let $\{(K^{\theta}, L^{\theta}; B^{\theta})\}_{\theta \in \Theta}$ be another uniformly excisive collection of non-unital filtered $C^*$-algebras and ideals. Assume there exists a map $\pi : \Theta \to \Omega$ and, for each $\theta \in \Theta$, an inclusion
\[
A^{\pi(\theta)} \subset B^{\theta}
\]
such that for each $r \geq 0$:
\[
A^{\pi(\theta)}_r \subset B^{\theta}_r, \quad
I^{\pi(\theta)}_r \subset K^{\theta}_r, \quad
J^{\pi(\theta)}_r \subset L^{\theta}_r.
\]
Let $r_0$ be given, and let $r_1$ be as in the statement above for both uniformly excisive families.
Then the following diagram commutes:
\[
\begin{tikzcd}[column sep=large, row sep=large]
K^{r_0,1/8}_*(A^{\pi(\theta)}) \arrow[r, "\partial_c"] \arrow[d] & K^{r_1,1/8}_*(I^{\pi(\theta)} \cap J^{\pi(\theta)}) \arrow[d] \\
K^{r_0,1/8}_*(B^{\theta}) \arrow[r, "\partial_c"] & K^{r_1,1/8}_*(K^{\theta} \cap L^{\theta}),
\end{tikzcd}
\]
where the vertical maps are induced by subspace inclusions.
\end{theorem}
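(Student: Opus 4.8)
The plan is to reduce this controlled Mayer--Vietoris statement to two ingredients already present in the controlled $K$-theory machinery of \cite{GuentnerWillettYu2024DynamicalComplexity}: the controlled boundary map and controlled exactness attached to a short exact sequence $0\to I\to A\to A/I\to 0$ of filtered $C^*$-algebras, together with a controlled excision isomorphism identifying the two quotient theories $K^{\bullet,1/8}_*(A/J)$ and $K^{\bullet,1/8}_*(I/(I\cap J))$. The algebraic backbone is the classical observation that when $I+J=A$ the inclusion $I\hookrightarrow A$ induces an isomorphism $I/(I\cap J)\xrightarrow{\ \sim\ }A/J$, so that the sequences $0\to I\cap J\to I\to A/J\to 0$ and $0\to J\to A\to A/J\to 0$ have a common quotient; from these one builds the Mayer--Vietoris boundary by composing the map to the quotient, the excision isomorphism, and the boundary of the first sequence.

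First I would install the quantitative substitute for $I+J=A$. Condition (i) in Definition~\ref{definition of excisive} says every norm-bounded element of $A^\omega_{r_0}$ is $\epsilon$-close to a sum $b+c$ with $b\in I^\omega_r$, $c\in J^\omega_r$, which makes the composite $I^\omega_r\to A^\omega_r\to (A^\omega/J^\omega)_{r_0}$ "quantitatively onto"; condition (ii) controls the filtration on the kernel $I^\omega\cap J^\omega$. Feeding these into the controlled five-lemma / excision arguments of \cite{GuentnerWillettYu2024DynamicalComplexity} produces, for each $r_0$, constants $r_1\ge r_0$ and a controlled excision isomorphism $K^{r_0,1/8}_*(A^\omega/J^\omega)\cong K^{r_1,1/8}_*(I^\omega/(I^\omega\cap J^\omega))$; crucially the constants depend only on $r_0$ and the fixed norm and quasi-defect bounds, not on $\omega$, which is exactly the uniformity hard-wired into the definition of a uniformly excisive collection. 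I would then define
$$
\partial_c:\ K^{r_0,1/8}_*(A^\omega)\ \longrightarrow\ K^{\ast,1/8}_*(A^\omega/J^\omega)\ \xrightarrow{\ \cong\ }\ K^{\ast,1/8}_*\!\bigl(I^\omega/(I^\omega\cap J^\omega)\bigr)\ \xrightarrow{\ \partial\ }\ K^{r_1,1/8}_*(I^\omega\cap J^\omega),
$$
the three arrows being the controlled quotient map, the controlled excision isomorphism, and the controlled boundary of $0\to I^\omega\cap J^\omega\to I^\omega\to I^\omega/(I^\omega\cap J^\omega)\to 0$; tracking the output filtration degree of each controlled exact-sequence result as a function of its input yields the asserted $r_1$.

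For the partial exactness I would run a controlled diagram chase. If $\partial_c(x)=0$, the controlled exactness of $0\to I^\omega\cap J^\omega\to I^\omega\to I^\omega/(I^\omega\cap J^\omega)\to 0$ gives, after enlarging the constant to some $r_2\ge r_1$, a class $y\in K^{r_2,1/8}_*(I^\omega)$ whose image in the common quotient agrees with that of $x$; then $x$ minus the image of $y$ maps to zero in $K^{\ast}_*(A^\omega/J^\omega)$, so the controlled exactness of $0\to J^\omega\to A^\omega\to A^\omega/J^\omega\to 0$ produces $z\in K^{r_2,1/8}_*(J^\omega)$ with $x=y+z$ in $K^{r_2,1/8}_*(A^\omega)$, as required. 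Naturality under an inclusion of uniformly excisive families $A^{\pi(\theta)}\subset B^\theta$ respecting filtrations and ideals is then automatic: the inclusion induces compatible maps of all the short exact sequences and of the excision data, every building block of $\partial_c$ is natural for such maps, and the constants $r_1$ produced for $\Omega$ and for $\Theta$ from the same $r_0$ coincide because they are computed by the same universal recipe depending only on $r_0$ and the fixed bounds.

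The main obstacle, and where the genuine labor sits, is the quantitative bookkeeping inside the $\tfrac{1}{8}$-quasi-projection / quasi-unitary calculus. The functional-calculus maps $\kappa$ (the cutoff $\chi_{(1/2,\infty)}$ on quasi-projections, and $u\mapsto u(u^*u)^{-1/2}$ on quasi-unitaries) and the many homotopies used in the classical six-term and Mayer--Vietoris arguments are only \emph{approximately} filtered: approximating them by polynomials enlarges both the filtration degree and the quasi-defect in a manner governed by spectral gaps, so every step must be carried out with a smaller defect and a larger ambient matrix size at intermediate stages and then relaxed. Making all of these estimates simultaneously, with constants depending only on $r_0$ (and the fixed norm/defect data) and never on $\omega$, is precisely what the uniform formulation of "uniformly excisive" is designed to accommodate, and is the reason the statement is phrased for a whole indexed family at once.
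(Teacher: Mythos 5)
This statement is imported verbatim from Guentner--Willett--Yu (it is their Proposition~7.7), and the paper you are reading gives no proof of it at all, so the only question is whether your blind argument would actually establish it. As written it would not: the load-bearing step is the ``controlled excision isomorphism'' $K^{r_0,1/8}_*(A^\omega/J^\omega)\cong K^{r_1,1/8}_*(I^\omega/(I^\omega\cap J^\omega))$ together with controlled exactness and controlled boundary maps for the quotient sequences $0\to I^\omega\cap J^\omega\to I^\omega\to I^\omega/(I^\omega\cap J^\omega)\to 0$ and $0\to J^\omega\to A^\omega\to A^\omega/J^\omega\to 0$, and none of this exists in the framework you are invoking. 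The quantitative groups $K^{r,1/8}_*$ are only defined here for a filtered $C^*$-algebra; no filtration on a quotient is specified, no controlled six-term or excision statement for quotients is available, and in fact the Guentner--Willett--Yu machinery (like Oyono-Oyono--Yu's) deliberately avoids quotients because quantitative $K$-theory does not pass to them without extra ``completely filtered extension''-type hypotheses. Your plan therefore defers essentially the whole content of the theorem to unproved controlled statements that are at least as hard as the theorem itself.

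Moreover, the hypotheses you are given do not supply the control those quotient sequences would need. Uniform excisiveness condition (i) is an approximate splitting $a\approx b+c$ of filtered elements of $A^\omega$, and condition (ii) controls only the intersection: an element of $I^\omega\cap J^\omega$ that is close to both $I^\omega_{r_0}$ and $J^\omega_{r_0}$ is close to $(I^\omega\cap J^\omega)_r$. Nothing in (i)--(ii) says that an element of $J^\omega$ close to $A^\omega_{r_0}$ is close to $J^\omega_{r_0}$ (nor the analogous statement for $I^\omega\cap J^\omega$ inside $I^\omega$), which is exactly the kind of hypothesis one needs to lift quasi-unitaries and quasi-projections along $A^\omega\to A^\omega/J^\omega$ with uniform propagation and defect bounds, i.e.\ to make your two exact sequences ``controlled exact.'' The actual proof in the cited source builds $\partial_c$ directly at the level of quasi-unitaries/quasi-projections: one uses (i) to write a lift of a quasi-unitary over $A^\omega_{r_0}$ approximately as a sum of elements of $I^\omega_r$ and $J^\omega_r$, forms the standard $2\times 2$ Whitehead/index-type element, and uses (ii) to see that the resulting quasi-projection (or quasi-unitary) lies, up to controlled error, in matrices over the unitization of $(I^\omega\cap J^\omega)_{r_1}$; naturality is then immediate because the construction is given by explicit formulas in the algebra elements. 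If you want to salvage your outline, you would have to either add and verify the missing quotient-level controlled machinery with uniform constants, or switch to this direct construction, which is what the hypotheses (i)--(ii) were designed for.
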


\section{Relative hyperbolic groups, decomposition complexity and the strategy of the proof}
In this section, we recall some properties of relative hyperbolic groups from \cite{Osin2005AsymptoticDimensionofRelativelyhyperbolicGroups} and outline the strategy of the proof of Theorem~\ref{main}. In this section, we assume that $G$ is a finitely generated group which is hyperbolic relative to a finite family of subgroups $\{H_1,\cdots ,H_n\}$. 

We fix a symmetric generating set $S$ of $G$ and denote by $d_S$ and $d_{S\cup \bigcup H_i}$ the Cayley distance on $G$ with respect to the generating set $S$ and $S\cup \bigcup H_i$, respectively. We denote by $B(n)$ the ball of radius $n$ under the metric $d_{S\cup \bigcup H_i}$ on $G$. 

\begin{lemma}[Proposition 6.8 of \cite{Osin2006RelativelyHyperbolicGroups}]\label{weak relative hyperbolicity}
    In the above setting, the metric space $(G,d_{S\cup \bigcup H_i})$ is hyperbolic.
\end{lemma}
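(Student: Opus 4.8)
Since this statement is literally Proposition~6.8 of Osin's monograph, the plan is simply to cite it; but I sketch the argument one would give for a self-contained proof. By definition (in Osin's formulation), $G$ being hyperbolic relative to $\{H_1,\dots,H_n\}$ means that $G$ admits a \emph{finite relative presentation} $\langle\,S,\mathcal H\mid \mathcal R\,\rangle$, where $\mathcal H=\bigsqcup_{i}(H_i\setminus\{e\})$ and $\mathcal R$ is finite, whose relative Dehn function is linear: there is a constant $C>0$ such that every word $w$ over $S\cup\mathcal H$ representing $e$ in $G$ bounds a van Kampen diagram over the relative presentation using at most $C|w|$ two-cells labelled by relators from $\mathcal R$. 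First I would recast this geometrically: attach to the Cayley graph $\Gamma:=\Gamma(G,\,S\cup\mathcal H)$ a two-cell along each relator of $\mathcal R$ and along each of its $G$-translates, obtaining the relative Cayley complex $\overline C$. Since $\mathcal R$ is finite, these two-cells have uniformly bounded boundary length, say at most $D$, and the relative Dehn inequality says exactly that a loop of length $\ell$ in $\Gamma$ bounds a disc diagram in $\overline C$ with at most $C\ell$ two-cells.

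Next I would run the classical argument that a linear combinatorial isoperimetric inequality forces geodesic triangles to be uniformly thin, adapted to this complex. Given a geodesic triangle in $\Gamma$ whose sides have length at most $L$, its boundary loop has length at most $3L$, hence bounds a van Kampen diagram $\Delta$ over $\overline C$ with at most $3CL$ two-cells, each of diameter at most $D$. A combinatorial analysis of $\Delta$ (following the usual treatment of the hyperbolic-group case) yields a constant $\delta=\delta(C,D)$, depending only on $C$ and $D$ and not on the triangle, bounding the width of $\Delta$; equivalently, each side of the triangle is contained in the $\delta$-neighbourhood of the union of the other two. Thus $\Gamma$ is $\delta$-hyperbolic, and since $(G,d_{S\cup\bigcup H_i})$ is the vertex set of $\Gamma$ equipped with its induced metric and hence quasi-isometric to $\Gamma$, and since Gromov hyperbolicity is a quasi-isometry invariant of length spaces, $(G,d_{S\cup\bigcup H_i})$ is hyperbolic. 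One could alternatively argue via Bowditch's characterization: $G$ acts properly on a fine $\delta$-hyperbolic graph with finitely many orbits of edges and with the $H_i$ occurring as the stabilizers of the infinite-valence vertices, and this graph is $G$-equivariantly quasi-isometric to $\Gamma$.

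The step that needs care — and the reason the paper quotes Osin rather than reproving — is the second one: the Cayley graph $\Gamma(G,\,S\cup\mathcal H)$ is in general \emph{not} locally finite, since each $H_i$ may be infinite, so one cannot invoke the textbook ``bounded valence plus linear Dehn function implies hyperbolic'' statement verbatim. The main point is therefore to verify that the combinatorial isoperimetric argument uses only the bounded boundary length of the $\mathcal R$-cells together with the linear area bound, and is insensitive to the valence of $\Gamma$; this verification is precisely the content of Osin's Proposition~6.8, which we invoke directly.
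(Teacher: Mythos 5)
Your proposal matches the paper exactly: the paper offers no argument of its own and simply quotes Proposition~6.8 of Osin's monograph, which is also your stated plan. The accompanying sketch (linear relative Dehn function, van Kampen diagrams over the relative Cayley complex, and the observation that the thin-triangles argument does not need local finiteness of $\Gamma(G,S\cup\mathcal{H})$) is a faithful outline of how Osin's result is actually proved, so there is nothing to correct.
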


By definition of $B(n)$, we have
\begin{align*}
    B(n)=\left(\bigcup_j B(n-1)H_j\right)\cup B(n-1)S.
\end{align*}
Take a representative $R(n-1)\subset B(n-1)$ from a subset $B(n-1)H_j\subset G/H_j$ of the right cosets so that we have 
\begin{align*}
    B(n-1)H_j=\bigsqcup_{g\in R(n-1)} gH_j.
\end{align*}

The following lemma plays a crucial role in the proof of the twisted Baum--Connes conjecture of $B(n)$. 

\begin{lemma}[Proposition 6.1 of \cite{DadarlatGuentner2007Uniform} and proof of Lemma 3.2 of \cite{Osin2005AsymptoticDimensionofRelativelyhyperbolicGroups}]\label{decomposition of Osin}
    For every $L>0$, there exists $\kappa(L)>0$ such that if
    \begin{align*}
        Y=\{x\in G:d_S(x,B(n-1))\leq \kappa(L)\},
    \end{align*}
    then for each $j$,
    \begin{align*}
        B(n-1)H_j\subset Y\sqcup \left(\bigsqcup_{g\in R(n-1)} gH_j\setminus Y\right)
    \end{align*}
    and the subspaces $\{gH_j\setminus Y\}_{g\in R(n-1)}$ are $L$-separated. 
\end{lemma}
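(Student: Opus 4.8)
The plan. The set-theoretic inclusion $B(n-1)H_j \subset Y \sqcup \bigsqcup_{g\in R(n-1)}(gH_j\setminus Y)$ carries no geometric content and I would dispose of it first: by the choice of the transversal $R(n-1)$ one has $B(n-1)H_j=\bigsqcup_{g\in R(n-1)} gH_j$, and for each coset $gH_j=(gH_j\cap Y)\sqcup(gH_j\setminus Y)$ with $gH_j\cap Y\subset Y$. Hence the whole content of the lemma is that, for a suitable $\kappa(L)$, the family $\{gH_j\setminus Y\}_{g\in R(n-1)}$ is $L$-separated in the metric $d_S$; this is precisely where relative hyperbolicity enters, and it is what is established in \cite[Prop.~6.1]{DadarlatGuentner2007Uniform} and in the proof of \cite[Lemma~3.2]{Osin2005AsymptoticDimensionofRelativelyhyperbolicGroups}. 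I would present the argument along the following lines.

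Reduction: if $gH_j\cap B(n-2)\neq\emptyset$ then, since any two elements of a fixed $H_j$-coset lie at $d_{s\cup\bigcup H_i}$-distance at most $1$, the whole coset $gH_j$ lies in $B(n-1)\subset Y$, so $gH_j\setminus Y=\emptyset$ and that coset may be ignored. We may therefore assume $gH_j\cap B(n-2)=g'H_j\cap B(n-2)=\emptyset$, and then $d_{s\cup\bigcup H_i}(e,g)=d_{s\cup\bigcup H_i}(e,g')=n-1$, so the representatives $g,g'$ serve as "entry points" of their cosets.

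Core of the argument: suppose, for contradiction, that $g\neq g'$ in $R(n-1)$ and that there are $x\in gH_j\setminus Y$, $y\in g'H_j\setminus Y$ with $d_S(x,y)<L$. Write $x=gh_x$, $y=g'h_y$ with $h_x,h_y\in H_j$. Since $g,g'\in B(n-1)$ while $x,y\notin Y=\{z\in G: d_S(z,B(n-1))\le\kappa(L)\}$, left-invariance of $d_S$ gives $|h_x|_S=d_S(x,g)\ge d_S(x,B(n-1))>\kappa(L)$ and, likewise, $|h_y|_S>\kappa(L)$ — that is, $x$ and $y$ lie deep inside their respective cosets. On the other hand, as $h_x,h_y$ are single letters in the relative generating set,
\[
d_{s\cup\bigcup H_i}(g,g')\le d_{s\cup\bigcup H_i}(g,x)+d_{s\cup\bigcup H_i}(x,y)+d_{s\cup\bigcup H_i}(y,g')\le 1+L+1,
\]
so the distinct $H_j$-cosets $gH_j\neq g'H_j$ are uniformly close in the hyperbolic metric $d_{s\cup\bigcup H_i}$ of Lemma~\ref{weak relative hyperbolicity}. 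The last step is to invoke the bounded coset penetration property of $G$ relative to $\{H_1,\dots,H_N\}$ (equivalently: peripheral cosets are quasiconvex and have bounded coarse intersection, both consequences of hyperbolicity of $(G,d_{s\cup\bigcup H_i})$): comparing the path that goes from $g$ into $gH_j$ to $x$, then along a short $d_S$-geodesic from $x$ to $y$, then inside $g'H_j$ to $g'$, with a $d_S$-geodesic from $g$ to $g'$, one extracts a constant $\kappa_0(L)$ — depending only on $L$ and the fixed peripheral structure — bounding how deeply such a configuration can penetrate $gH_j$ and $g'H_j$; in particular $\min(|h_x|_S,|h_y|_S)\le\kappa_0(L)$. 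Taking $\kappa(L):=\kappa_0(L)+1$ contradicts the inequality above and proves the separation. (Alternatively one simply quotes the two cited references verbatim.)

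Main obstacle: everything up to and including the two displayed inequalities is elementary; the crux is the final step, where "deep penetration plus $d_S$-proximity" must be turned into a genuine contradiction. This requires the bounded coset penetration property and some care in the bookkeeping between the two metrics $d_S$ and $d_{s\cup\bigcup H_i}$ and between cosets and their entry and exit points — standard in the theory of relatively hyperbolic groups and carried out in \cite{Osin2005AsymptoticDimensionofRelativelyhyperbolicGroups} and \cite{DadarlatGuentner2007Uniform}, to which I would appeal. The only thing worth double-checking is that every constant that appears depends solely on $L$ and the fixed peripheral structure, so that the resulting $\kappa(L)$ is independent of $n$.
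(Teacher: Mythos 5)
Your reduction of the lemma to the $L$-separation statement, and the two displayed inequalities, are fine, and since the paper itself offers no argument beyond the attribution to \cite{DadarlatGuentner2007Uniform} and \cite{Osin2005AsymptoticDimensionofRelativelyhyperbolicGroups}, your fallback of quoting those references is exactly what the paper does. The problem is the intermediate claim you extract from ``bounded coset penetration'': that for distinct cosets $gH_j\neq g'H_j$ with $g,g'\in B(n-1)$, points $x=gh_x$, $y=g'h_y$ with $d_S(x,y)\le L$ must satisfy $\min(|h_x|_S,|h_y|_S)\le\kappa_0(L)$. This is false, and it is not what the cited results say. Take $G=F_2=\langle a,b\rangle$, hyperbolic relative to $H=\langle a\rangle$, and set $g=e$, $g'=a^{m}ba^{m}$, so $d_{s\cup \bigcup H_i}(e,g')\le 3$ and both representatives lie in a small relative ball; the cosets $\langle a\rangle$ and $g'\langle a\rangle$ are distinct, yet $x=a^{m}\in\langle a\rangle$ and $y=a^{m}b=g'a^{-m}\in g'\langle a\rangle$ satisfy $d_S(x,y)=1$ while $d_S(x,g)=d_S(y,g')=m$ is arbitrarily large. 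So distance to the chosen representatives (which are, moreover, an arbitrary transversal $R(n-1)$, so this quantity is not even canonically defined) cannot be bounded, and your contradiction never materializes. The lemma survives in this example only because $x=a^{m}$ lies in $B(1)\subset B(n-1)$: the quantity that is actually controlled is $d_S(x,B(n-1))$, the distance to the \emph{whole} relative ball, not the distance to $g$.

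This is precisely the step where your sketch loses the essential information: you pass from ``$d_S(x,B(n-1))>\kappa(L)$'' to ``$d_S(x,g)>\kappa(L)$'', which goes in the harmless direction, but then you need a converse-type bound on $d_S(x,g)$, which does not exist. The genuine content of the lemma is that a close approach (in $d_S$) of two distinct peripheral cosets both meeting $B(n-1)$ can only occur $d_S$-close to $B(n-1)$ itself; proving this requires analyzing how relative geodesics from $e$ of length about $n$ enter and exit the $H_j$-components, using hyperbolicity of $(G,d_{s\cup\bigcup H_i})$ together with Osin's bounded-component (BCP-type) estimates for cycles in the relative Cayley graph, with care that the resulting constant is independent of $n$. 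None of this is captured by a statement about depth measured from the transversal points $g,g'$, so as written the core step of your argument fails; either carry out Osin's argument with depth measured by $d_S(\cdot,B(n-1))$, or simply cite the two references as the paper does, without the incorrect intermediate inequality.
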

Recall that a family of subspaces $\{X_i\}_i$ of a metric space $(X,d)$ is said to be $L$-separated if $d(X_i, X_j)\geq L$ for $i\neq j$. 

Now, we recall the notion of decomposition of metric spaces from \cite{GuentnerTesseraYu2012ANotionofGeometricComplexity} and \cite{GuentnerTesseraYu2013DiscreteGroupswithFiniteDecompositionComplexity}.

\begin{definition}
    An $r$-decomposition of a metric space $X$ over a metric family $\mathcal{Y}$ is a decomposition
    \begin{align*}
        X=X_0\cup X_1,\quad X_i=\bigsqcup_{j} X_{i,j}
    \end{align*}
    where each $X_{i,j}\in \mathcal{Y}$ and for $i=0,1$, the metric family $\{X_{i,j}\}_j$ is $r$-separated. A metric family $\mathcal{X}$ is $r$-decomposable over $\mathcal{Y}$ if each $X\in \mathcal{X}$ admits an $r$-decomposition over $\mathcal{Y}$. 
\end{definition}

\begin{definition}
    Let $\mathfrak{A}$ be a collection of metric families. A metric family $\mathcal{X}$ is decomposable over $\mathfrak{A}$, if for every $r>0$ there exists $\mathcal{Y}\in \mathfrak{A}$ such that $\mathcal{X}$ admits an $r$-decomposition over $\mathcal{Y}$.
\end{definition}

\begin{example}[Theorem 4.1 of \cite{GuentnerTesseraYu2013DiscreteGroupswithFiniteDecompositionComplexity}]
    Let $\mathfrak{C}$ be the smallest collection of metric families, which contains all uniformly bounded metric families and closed under decomposition (i.e. if $\mathcal{Y}$ decomposes over $\mathfrak{C}$, then $\mathcal{Y}\in \mathfrak{C}$). A metric space $X$ is said to have finite decomposition complexity if the metric family $\{X\}$ is in $\mathfrak{C}$.\\
    If a metric space $X$ has finite asymptotic dimension (not necessarily with bounded geometry), then $X$ has finite decomposition complexity. Especially, the metric family $\{(G,d_{s\cup \bigcup H_i})\}$ has finite decomposition complexity.
\end{example}

Let us clarify what we need to prove to show Theorem~\ref{main}. For this we define a class of metric family.

\begin{definition}\label{class of metric family of uniform cbc}
    A metric family $\mathcal{X}$ is said to satisfy the twisted coarse Baum--Connes conjecture uniformly with respect to any stable coarse algebras if for any  stable coarse $\bigsqcup_{X\in \mathcal{X}}X$-algebra $\Gamma(\bigsqcup_{X\in \mathcal{X}} X,\mathcal{A}\otimes \mathcal{K})$, we have
    \begin{align*}
        \varinjlim_{s} K_*\left(C^*_{L,0}\left(\bigsqcup_{X\in \mathcal{X}} P_s(X);\Gamma\left(\bigsqcup_{X\in \mathcal{X}} X,\mathcal{A}\otimes \mathcal{K}\right)\right)\right)=0.
    \end{align*}
    We denote by $\mathfrak{D}$ the class of metric families $\mathcal{X}$ which consists of subspaces of $G$ which satisfies the twisted coarse Baum--Connes conjecture uniformly with respect to any stable coarse algebras.  
\end{definition}

Since $(G,d_{s\cup \bigcup H_i})$ has finite decomposition complexity, to prove that $\{(G,d_S)\}\in \mathfrak{D}$, it suffices to show the following lemmas:

\begin{lemma}\label{base step}
    Assume that each peripheral subgroup $H_i$ satisfies the twisted coarse Baum--Connes conjecture with respect to any stable coarse algebras. If $\mathcal{X}$ is a metric family consists of subspaces of $(G,d_S)$ whose diameter with respect to the distance $d_{s\cup \bigcup H_i}$ is uniformly bounded, then $\mathcal{X}\in \mathfrak{D}$.
\end{lemma}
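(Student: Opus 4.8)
The plan is to reduce the statement, via the machinery already developed in Section~2, to the twisted coarse Baum--Connes conjecture for the peripheral subgroups together with a bounded-diameter base case, and then to assemble the pieces along cosets. First I would use the hypothesis that each $X \in \mathcal{X}$ has uniformly bounded diameter with respect to $d_{s\cup\bigcup H_i}$. Since $(G, d_{s\cup\bigcup H_i})$ is obtained from $(G,d_S)$ by collapsing each coset $gH_i$ to bounded diameter, a subset of bounded $d_{s\cup\bigcup H_i}$-diameter is, up to the induced $d_S$-metric, a bounded union of pieces each of which is a subspace of some coset $gH_i$, and these pieces become arbitrarily separated in $d_S$ after intersecting with the complement of a neighborhood of a smaller ball (this is exactly the content of Lemma~\ref{decomposition of Osin}). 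Thus for the $d_S$-metric, each $X \in \mathcal{X}$ decomposes, for every $r$, into finitely many pieces (the number bounded by the $d_{s\cup\bigcup H_i}$-diameter and the number of peripheral subgroups) that are $r$-separated and each sitting inside a coset of some $H_i$. Applying Theorem~\ref{twist for subsp.} and Lemma~\ref{uniform coarse BaumConnes of subspaces}, the twisted coarse Baum--Connes conjecture (uniformly, with respect to any stable coarse algebra) for the disjoint union $\bigsqcup_{X \in \mathcal{X}} X$ over all pieces in all cosets of $H_i$ is equivalent to that for the disjoint union of the relevant peripheral copies; and a coset $gH_i$ is isometric to $H_i$, so this follows from the hypothesis that each $H_i$ satisfies the conjecture with respect to any stable coarse algebra.

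More concretely, the key steps in order are: (1) translate the bounded $d_{s\cup\bigcup H_i}$-diameter condition on each $X \in \mathcal{X}$ into a uniform coarse decomposition of $(X, d_S)$ into a uniformly bounded number of ``blocks'', each block being a subspace of a coset $gH_{i(g)}$; for this the only real input is the definition of $d_{s\cup\bigcup H_i}$ and finiteness of $\{H_1,\dots,H_N\}$. (2) Use the Mayer--Vietoris / controlled decomposition for obstruction algebras (the controlled $K$-theory machinery of Section~3, applied to the filtered obstruction algebras $C^*_{L,0}(M; \Gamma(X,\mathcal{A}\otimes\mathcal{K}))_r$), or more simply the block-diagonal structure together with the permanence-for-subspaces result Theorem~\ref{permanence for subsp.}, to reduce vanishing of $\varinjlim_s K_*(C^*_{L,0}(\bigsqcup_{X\in\mathcal{X}} P_s(X); \Gamma(\cdots)))$ to the same vanishing for the metric family consisting of all the blocks. (3) Since every block is isometric to a subspace of some $H_i$, invoke Theorem~\ref{permanence for subsp.} once more (passing from a subspace of $H_i$ up to $H_i$ itself), and combine with Lemma~\ref{uniform coarse BaumConnes of subspaces} to convert the disjoint union over all blocks of all the $H_i$ into a statement about a single $H_i$ with coefficient algebra replaced by $\ell^\infty(\mathbb{N}, \mathcal{A}\otimes\mathcal{K})\otimes\mathcal{K}$. (4) Apply the hypothesis that each $H_i$ satisfies the twisted coarse Baum--Connes conjecture with respect to any stable coarse algebra to conclude the obstruction group vanishes, hence $\mathcal{X} \in \mathfrak{D}$.

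The main obstacle I expect is step (1)--(2): making the decomposition of $(X,d_S)$ into blocks genuinely \emph{uniform} over the whole family $\mathcal{X}$, and compatible with an arbitrary stable coarse $\bigsqcup_{X\in\mathcal{X}}X$-algebra. The subtlety is that a single $X$ of bounded $d_{s\cup\bigcup H_i}$-diameter may still be $d_S$-unbounded and spread across many cosets; one must use Lemma~\ref{decomposition of Osin} to separate these coset-pieces after deleting a bounded neighborhood, but the bounded neighborhood itself has bounded $d_S$-diameter only after a further collapse, so the argument is genuinely inductive on the $d_{s\cup\bigcup H_i}$-diameter (or equivalently uses the finite-decomposition-complexity/finite-asymptotic-dimension structure of $(G, d_{s\cup\bigcup H_i})$ restricted to a bounded set, which is trivial, but must be tracked uniformly). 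Keeping the stable coarse algebra coherent across this decomposition is exactly where Conditions~(3) and~(4) of Definition~\ref{stable coarse algebra} and the restriction/induction formulas \eqref{twist for coarse equi} and \eqref{coarse alg from subspace} are needed; once the decomposition is set up uniformly, the $K$-theoretic bookkeeping is routine given the results of Sections~2 and~3.
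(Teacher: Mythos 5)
Your toolbox is the right one (Osin's Lemma~\ref{decomposition of Osin}, the uniform subspace permanence of Theorem~\ref{permanence for subsp.} together with Lemma~\ref{uniform coarse BaumConnes of subspaces}, and the controlled Mayer--Vietoris step of Lemma~\ref{inductive step}), but the concrete decomposition you base steps (1)--(2) on is false, and the point you flag as ``the main obstacle'' is precisely the part that still needs a proof. A subset of uniformly bounded $d_{S\cup\bigcup H_i}$-diameter does \emph{not} decompose into finitely many $r$-separated pieces each contained in a coset: already $B(2)$ meets infinitely many cosets $gH_j$ (one for each $g\in R(1)$, and $R(1)$ is infinite whenever some $H_i$ is), and by Lemma~\ref{decomposition of Osin} the pieces $gH_j\setminus Y$ become $L$-separated only after deleting the set $Y=\{x: d_S(x,B(n-1))\le \kappa(L)\}$, which is $d_S$-unbounded, is not contained in any coset, and grows with $L$. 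So for each separation parameter $L$ you get: one leftover piece $Y$, coarsely equivalent to $B(n-1)$, plus an \emph{infinite} uniformly-treated family of subspaces of copies of $H_j$. Handling $Y$ is exactly what forces the induction on the relative radius $n$, and your proposal acknowledges this but never sets the induction up or closes it; as written, the ``finitely many blocks, $r$-separated for every $r$'' claim on which steps (2)--(4) rest would fail.

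The paper resolves both the induction and your uniformity worry by reversing the order of quantifiers: it first proves the conjecture for the \emph{single} space $B(n)$ by induction on $n$ (Corollary~\ref{B(n)}), writing $B(n)=\bigcup_i B(n;i)$ with $B(n;i)=B(n-1)S\cup B(n-1)H_i$, decomposing $B(n-1)H_i$ via Lemma~\ref{decomposition of Osin} over the family consisting of $Y$ (coarsely $B(n-1)$, covered by the inductive hypothesis) and the separated coset pieces (covered uniformly by Lemma~\ref{family of subspaces}), and gluing with Lemma~\ref{inductive step} and Corollary~\ref{union}. Only then does it treat the family $\mathcal{X}$: each $X\in\mathcal{X}$ is, after a left translation (an isometry for both metrics), a subspace of the fixed ball $B(n)$, so a single application of Lemma~\ref{family of subspaces} to subspaces of $B(n)$ yields the uniform vanishing, i.e.\ $\mathcal{X}\in\mathfrak{D}$. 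If you restructure your argument this way --- prove the single-space statement for $B(n)$ first, then quote the uniform subspace lemma --- the uniformity across $\mathcal{X}$ and the compatibility with an arbitrary stable coarse algebra come for free, and the per-member decompositions you were trying to make uniform are no longer needed.
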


\begin{lemma}\label{inductive step}
    If $\mathcal{X}$ decomposes over $\mathfrak{D}$, then $\mathcal{X}$ is in $\mathfrak{D}$.
\end{lemma}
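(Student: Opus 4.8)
The plan is to run the controlled Mayer--Vietoris induction of \cite{GuentnerWillettYu2024DynamicalComplexity}, applied to the twisted obstruction algebras $C^*_{L,0}$ in place of ordinary Roe algebras. As a preliminary I would record that $\mathfrak D$ is closed under passing to subfamilies and, more generally, to families of subspaces of its members: if $\mathcal Y=\{Y_\alpha\}\in\mathfrak D$ and $Z_\alpha\subseteq Y_\alpha$ for each $\alpha$, then $\bigsqcup_\alpha Z_\alpha$ is a subspace (in the sense of Theorem~\ref{permanence for subsp.}) of $\bigsqcup_\alpha Y_\alpha$, the latter satisfies the twisted coarse Baum--Connes conjecture with respect to any stable coarse algebra because $\mathcal Y\in\mathfrak D$, and hence so does $\bigsqcup_\alpha Z_\alpha$; thus $\{Z_\alpha\}\in\mathfrak D$. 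In particular, for any $r$-decomposition $X=X_0\cup X_1$, $X_i=\bigsqcup_j X_{i,j}$ over a family $\mathcal Y\in\mathfrak D$, the families $\{X_{i,j}\}$ and $\{X_{0,j}\cap X_1\}$ (consisting of subspaces of members of $\mathcal Y$) again lie in $\mathfrak D$.

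Now fix a stable coarse $\bigl(\bigsqcup_{X\in\mathcal X}X\bigr)$-algebra $\Gamma=\Gamma(\bigsqcup_X X,\mathcal A\otimes\mathcal K)$; we must show $\varinjlim_s K_*\bigl(C^*_{L,0}(\bigsqcup_X P_s(X);\Gamma)\bigr)=0$. By Proposition~4.9 of \cite{GuentnerWillettYu2024DynamicalComplexity} the comparison maps identify this group with $\varinjlim_{s,r}K^{r,1/8}_*\bigl(C^*_{L,0}(\bigsqcup_X P_s(X);\Gamma)\bigr)$, so it is enough to show that every controlled class $\bar x\in K^{r_0,1/8}_*\bigl(C^*_{L,0}(\bigsqcup_X P_s(X);\Gamma)\bigr)$ dies after enlarging the Rips scale $s$ and the propagation $r_0$. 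Let $r_1,r_2\geq r_0$ be the constants that Proposition~7.7 of \cite{GuentnerWillettYu2024DynamicalComplexity} produces from $r_0$ for uniformly excisive families. Choose a separation $R$ very large compared with $r_0,r_1,r_2$ and $s$, and, using that $\mathcal X$ decomposes over $\mathfrak D$, fix $\mathcal Y\in\mathfrak D$ with $R$-decompositions $X=X_0^X\cup X_1^X$, $X_i^X=\bigsqcup_j X_{i,j}^X$, $X_{i,j}^X\in\mathcal Y$, whose pieces are pairwise $R$-separated. Passing to the coarse disjoint union, $\bigsqcup_X X=\widetilde X_0\cup\widetilde X_1$ with $\widetilde X_i=\bigsqcup_{X,j}X_{i,j}^X$; by the preliminary step the families $\{X_{i,j}^X\}_{X,j}$ lie in $\mathfrak D$, and $\widetilde X_0\cap\widetilde X_1$ is a coarse disjoint union of subspaces of members of $\mathcal Y$, so its family is in $\mathfrak D$ as well.

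Inside $A:=C^*_{L,0}(\bigsqcup_X P_s(X);\Gamma)$ I would form the filtered ideals $I$ and $J$ associated with $\widetilde X_0$ and $\widetilde X_1$ (operators whose time-sections are, uniformly in $t$, supported in a bounded neighbourhood of $\bigsqcup_X P_s(X_0^X)$, resp.\ of $\bigsqcup_X P_s(X_1^X)$), with the propagation filtration. Then $I+J$ is dense in $A$, $I\cap J$ is identified with the obstruction algebra of $\widetilde X_0\cap\widetilde X_1$ for the induced stable coarse algebra, and — because $R$ is huge compared with the propagation scales involved — at propagation $\leq\max(r_1,r_2)$ the pieces of $\widetilde X_0$ and of $\widetilde X_1$ decouple, so that at those scales $I$, $J$ and $I\cap J$ coincide with the obstruction algebras of the $\mathfrak D$-families $\{X_{0,j}^X\}$, $\{X_{1,j}^X\}$ and $\{X_{0,j}^X\cap X_1^X\}$. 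Running over the Rips scales $s'\geq s$, the triples $(I_{s'},J_{s'};A_{s'})$ together with the inclusion maps form a uniformly excisive collection in the sense of Definition~\ref{definition of excisive}; the verification is modelled on the one for ordinary localization algebras, the splitting of a twisted operator supported near $\widetilde X_0\cup\widetilde X_1$ into twisted operators supported near $\widetilde X_0$ and near $\widetilde X_1$ being supplied precisely by conditions (3) and (4) of Definition~\ref{stable coarse algebra}. Applying Proposition~7.7 of \cite{GuentnerWillettYu2024DynamicalComplexity} at initial scale $r_0$ gives a class $\partial_c(\bar x)\in K^{r_1,1/8}_*(I\cap J)$ such that $\partial_c(\bar x)=0$ forces $\bar x=y+z$ in $K^{r_2,1/8}_*(A)$ with $y\in K^{r_2,1/8}_*(I)$ and $z\in K^{r_2,1/8}_*(J)$. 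Since the families defining $I\cap J$, $I$ and $J$ all lie in $\mathfrak D$, the limits $\varinjlim_{s',r'}K^{r',1/8}_*(\,\cdot\,)$ of their obstruction algebras vanish; using the naturality of $\partial_c$ in the Rips scale (the last part of Proposition~7.7) together with Proposition~4.9, one first kills $\partial_c(\bar x)$ after enlarging $s$ and the propagation, then kills $y$ and $z$, and concludes that $\bar x$ itself dies. Hence $\varinjlim_s K_*\bigl(C^*_{L,0}(\bigsqcup_X P_s(X);\Gamma)\bigr)=0$ for every $\Gamma$, i.e.\ $\mathcal X\in\mathfrak D$.

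The main obstacle is the interlocking of the two scales: the Rips scale $s$, which governs the localization complex, and the propagation $r$. The output scale of Proposition~7.7 at each step is prescribed by the input scale, whereas the inductive hypothesis for $I\cap J$ only guarantees vanishing at \emph{some} larger scale, and, for a fixed separation $R$, the triples are uniformly excisive only up to propagation of order $R$; reconciling these forces one to iterate Proposition~7.7 with a suitably enlarged initial scale a bounded number of times and to use the compatibility of the controlled boundary map with enlarging both $r$ and $s$. The remaining and more routine work is the uniform-excision estimate for the twisted obstruction ideals and the precise identification — uniform over the decomposition — of $I$, $J$ and $I\cap J$ with obstruction algebras of families in $\mathfrak D$, which is exactly where conditions (3) and (4) of Definition~\ref{stable coarse algebra} are indispensable.
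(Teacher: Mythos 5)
Your overall strategy --- controlled Mayer--Vietoris for the twisted obstruction algebras, uniform excisiveness supplied by conditions (3) and (4) of Definition~\ref{stable coarse algebra}, and membership in $\mathfrak{D}$ of the pieces and of the intersection --- is indeed the paper's strategy, and you correctly name the scale-interlocking problem; but you do not resolve it, and as written there is a genuine gap. Proposition~7.7 of \cite{GuentnerWillettYu2024DynamicalComplexity} only yields $\bar x=y+z$ if $\partial_c(\bar x)$ vanishes in $K^{r_1,1/8}_*(I\cap J)$, at the prescribed scale $r_1$ and in that very algebra; what membership of the intersection family in $\mathfrak{D}$ gives is vanishing only after enlarging \emph{both} the Rips scale and the propagation, by an amount that cannot be bounded in advance in terms of $r_0$. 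So ``first kill $\partial_c(\bar x)$ after enlarging $s$ and the propagation, then kill $y$ and $z$'' has the quantifiers in the wrong order: until $\partial_c(\bar x)$ is literally zero at scale $r_1$, the elements $y$ and $z$ do not exist. The same unbounded-in-advance issue defeats both of your fallback devices: fixing a separation $R$ ``very large compared with $r_0,r_1,r_2$ and $s$'' cannot guarantee that the (unknown) scale at which the boundary class or the pieces $y,z$ die stays below $R$, and iterating Proposition~7.7 changes $r_1,r_2$ at every step with no a priori bound on the number of iterations.

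The paper closes this gap by a concrete re-packaging that is absent from your outline. First, the $r$-neighbourhoods of the decomposition pieces are re-metrized as coarse disjoint unions $Y_i^{+r}$ (distinct components at infinite distance), so the triples $(I^{\omega},J^{\omega};B^{\omega})$ are uniformly excisive at \emph{all} propagation scales and the separation of the decomposition only needs to be $3(2r_0+s_0)$, not large compared with the unknown later scales. Second, one introduces the enlarged triples $(I^{\omega,s},J^{\omega,s};B^{\omega,s})$ obtained by adjoining the obstruction algebra of the intersection at Rips scale $s$ with the \emph{rescaled} filtration $(\cdot)_{sr}$; this two-parameter family (over all $\omega$ and all $s\geq s_0$) is again uniformly excisive, so $r_1,r_2$ are independent of $s$, and the naturality clause of Proposition~7.7 --- which is naturality under inclusions of triples, not ``in the Rips scale'' --- applied to the inclusion $(I^{\omega},J^{\omega};B^{\omega})\subset(I^{\omega,s_1},J^{\omega,s_1};B^{\omega,s_1})$ converts the a posteriori vanishing of $\partial_c(\bar x)$ (at some large Rips scale and propagation, absorbed into scale $r_1$ by the rescaled filtration) into genuine vanishing at scale $r_1$ in the enlarged triple. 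Only then does controlled exactness give $\bar x=y+z$ there, and the inclusion $I^{\omega,s_1}_{r_2}\subset I^{\omega}_{s_1r_2}$ lets the hypothesis on the families $\{Y_i^{(j)}\}$ kill $y$ and $z$ at a yet larger scale. Without this (or an equivalent) mechanism, your argument does not close.
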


We will prove Lemma~\ref{base step} in Section 5.3 and Lemma~\ref{inductive step} in Section 5.2. \\

Combining Lemmas \ref{base step} and \ref{inductive step}, we obtain our main result.

\begin{theorem}
    Let $G$ be a finitely generated group which is hyperbolic relative to a finite family of subgroups $\{H_1,\cdots ,H_N\}$. 
    Then $G$ satisfies the twisted coarse Baum--Connes conjecture with respect to any stable coarse algebras if and only if each $H_i$ does.
\end{theorem}

\section{Proof of Lemma~\ref{base step} and \ref{inductive step}}
In this section, we prove Lemma \ref{base step} and Lemma \ref{inductive step} to complete the proof of Theorem~\ref{main}. The lemmas are consequences of controlled Mayer-Vietoris sequences, which first appeared in \cite{Yu1998NovikoforFiniteAsymptoticDimension}. The proof is divided into three parts. In the first part, we show that a decomposition of metric spaces induces a decomposition of obstruction algebras so that we can use the controlled Mayer-Vietoris sequence formulated in \cite{GuentnerWillettYu2024DynamicalComplexity}. Second, using the cutting-and-pasting argument, we reduce the problem to the twisted coarse Baum--Connes conjecture for $B(n)$ for each $n$. Finally, we show that any metric family consisting of subspaces of $B(n)$ satisfies the twisted coarse Baum--Connes conjecture with respect to any stable coarse algebras again using the cutting-and-pasting argument.

\subsection{Decomposition}

Fix $s_0\in \mathbb{N}$.
    Let $(X,d)$ be a uniformly locally finite metric space and assume that $\omega=(Y_0,Y_1)$ be a decomposition of $X$. (i.e. $Y_0$ and $Y_1$ are subspaces of $X$ such that $X=Y_0\cup Y_1$.) Let $R_r$ be an increasing function of $r$ such that $R_r> (s_0+1)r$. We decompose $Y_i=\bigsqcup Y_i^{j}$ into $3R_r$-separated subsets.
Then for $i=0,1$ we have
\begin{align*}
    N_r(P_{s_0}(Y_i))=\bigsqcup_j N_r(P_{s_0}(Y_i^{j})),
\end{align*}
where $N_r$ is the $r$-neighborhood in $P_{s_0}(X)$ and the disjoint union is at least $R_r$-separated. We define the new distance $d^{R,r}_i$ on $ N_r(P_{s_0}(Y_i))=\bigsqcup_j N_r(P_{s_0}(Y_i^{j}))$ by 
\begin{align*}
    d_i^{R,r}(x,y)=\left\{\begin{array}{cc}
        d(x,y) & x\in N_r(P_{s_0}(Y_i^{j})), ~ y\in N_r(P_{s_0}(Y_i^{j})) \text{ for some }j   \\
         \infty & \text{otherwise.}
    \end{array} \right.
\end{align*}
We denote by $P_{s_0}(Y_i)^{+r}$ the space $N_r(P_{s_0}(Y_i))$ equipped with the distance $d_i^{R,r}$.

For any coarse $X$-algebra $\Gamma(X,\mathcal{A}\otimes \mathcal{K})$ and a subspace $X'\subset X$, we define a coarse $X'$-algebra $\Gamma(X',\mathcal{A}\otimes \mathcal{K})$ by
\begin{align}\label{coarse alg of subspace}
    \Gamma(X',\mathcal{A}\otimes \mathcal{K}):=\{f|_{X'}:f\in\Gamma(X,\mathcal{A}\otimes \mathcal{K}) \}.
\end{align}
We first define filtered subspaces of $C^*_{L,0}(P_{s_0}(X);\Gamma(X,\mathcal{A}\otimes \mathcal{K}))$ by 
\begin{equation*}
    \begin{aligned}
        B_{r}^{X,R,\omega}&:=C^*_{L,0}(P_{s_0}(Y_0)^{+r};\Gamma(Y_0,\mathcal{A}\otimes\mathcal{K}))_r+C^*_{L,0}(P_{s_0}(Y_1)^{+r};\Gamma(Y_1,\mathcal{A}\otimes\mathcal{K}))_r,\\
        I_{r}^{X,R,\omega}&:=C^*_{L,0}(P_{s_0}(Y_0)^{+r};\Gamma(Y_0,\mathcal{A}\otimes\mathcal{K}))_r,\\
        J_{r}^{X,R,\omega}&:=C^*_{L,0}(P_{s_0}(Y_1)^{+r};\Gamma(Y_1,\mathcal{A}\otimes\mathcal{K}))_r.
    \end{aligned}
\end{equation*}
Then we define $C^*$-subalgebras of $C^*_{L,0}(P_{s_0}(X);\Gamma(X,\mathcal{A}\otimes \mathcal{K}))$:
$$B^{X,R,\omega}:=\overline{\bigcup_{r\geq 0}B_r^{\omega}},~I^{X,R,\omega}:=\overline{\bigcup_{r\geq 0}I_r^{\omega}(X)},~\mbox{and}~I^{X,R,\omega}:=\overline{\bigcup_{r\geq 0}I_r^{\omega}(X)}.$$

%$B^{X,R,\omega}:=\overline{\bigcup_{r\geq 0}B_r^{\omega}}$ of $C^*_{L,0}(P_{s_0}(X))$ and two ideals $I^{X,R,\omega}:=\overline{\bigcup_{r\geq 0}I_r^{\omega}(X)}$ and $I^{X,R,\omega}:=\overline{\bigcup_{r\geq 0}I_r^{\omega}(X)}$ of $B^{X,R,\omega}$.

Now, we show that $I^{X,R,\omega}$ and $J^{X,R,\omega}$ are filtered ideals of $B^{X,R,\omega}$ (see Definition~\ref{filtered ideal}). It suffices to show that for any $S\in C^*(P_{s_0}(Y_1)^{+r_1};\Gamma(Y_1,\mathcal{A}\otimes\mathcal{K}))_{r_1}$ and $T\in C^*(P_{s_0}(Y_0)^{+r_2};\Gamma(Y_0,\mathcal{A}\otimes\mathcal{K}))_{r_2}$, the product $ST$ is an element in $C^*(P_{s_0}(Y_0)^{+r_1+r_2};\Gamma(Y_0,\mathcal{A}\otimes\mathcal{K}))_{r_1+r_2}$. We only need to verify the condition (1) of Definition~\ref{definition of twisted roe algebra}. Let us fix a Borel cover $\{B_x\}_{x\in Y_0}$ of $P_{s_0}(Y_0)^{+r_1+r_2}$ satisfying the four conditions in Lemma~\ref{borel cover}. For any partial translation $v$ on $Y_0$, we show that the function
\begin{align*}
    (ST)^v:Y_0\to \mathcal{A}\otimes \mathcal{K}, ~ x\mapsto \chi_{B_{v(x)}} (ST)\chi_{B_x} =\sum_y \left(\chi_{B_{v(x)}} S\chi_{B_y}\right)\left( \chi_{B_{y}} T\chi_{B_x}\right)
\end{align*}
is in $\Gamma(Y_0,\mathcal{A}\otimes\mathcal{K})$. Both $S$ and $T$ can be decomposed into finite sums of the operators of the form $f\cdot v$, where $f$ is an $\ell^{\infty}$ function and $v$ is a partial translation. 
By further decomposing partial translations, we may assume that for each $x\in Y_0$, there is at most one $y$ such that $\chi_{B_{v(x)}} S\chi_{B_y}\neq 0$ and $\chi_{B_{y}} T\chi_{B_x}\neq 0$. We denote such $y$ by $\alpha(x)$ and denote the assignment $v(x)\to y$ by $\beta$, and then $\alpha$ and $\beta$ are partial translations on $Y_0$. In this case, we have that
\begin{align*}
    (ST)^v(x)=\left(\chi_{B_{v(x)}} S\chi_{B_{\beta v^{-1}(x)}}\right)\left( \chi_{B_{\alpha(x)}} T\chi_{B_x}\right).
\end{align*}

By the definition of coarse algebras for subspaces \eqref{coarse alg of subspace}, there are $\varphi,\psi\in \Gamma(Y_0,\mathcal{A}\otimes\mathcal{K})$ such that $\varphi(x)=\chi_{B_{v(x)}} S\chi_{B_{\beta v^{-1}(x)}}$ and $\psi(x)=\chi_{B_{\alpha(x)}} T\chi_{B_x}$ for all $x\in Y_0$. Therefore the restriction of $\varphi\psi$ to $Y_0$ is also an element in $\Gamma(Y_0,\mathcal{A}\otimes\mathcal{K})$. 

\begin{lemma}\label{excisive}
    The family of triples $(I^{X,R,\omega},J^{X,R,\omega};B^{X,R,\omega})_{X,R,\omega}$ is uniformly excisive (Definition~\ref{definition of excisive}), where $X$ runs all metric space with bounded geometry, $\omega$ runs over all decompositions $\omega=(Y_0,Y_1)$ of $ X$ and $R$ runs over all increasing functions of $r$ satisfying $R_r>(3s_0+1)r$.
\end{lemma}

\begin{proof}
    For given $m_0, \varepsilon, r_0$ in Definition~\ref{definition of excisive}, we prove that $m=4m_0$, $\delta=\frac{1}{2}\varepsilon$ and $r=r_0+1$ work.

First, we prove the first condition of Definition~\ref{definition of excisive}. Note that 
$$P_{s_0}(X)\subset N_{1}(P_{s_0}(Y_0)) \cup N_{1}(P_{s_0}(Y_1)).$$ 
We define 
\begin{align*}
    Z_0&:=N_{r_0}(P_{s_0}(Y_0)) \setminus N_{r_0}(P_{s_0}(Y_1)),\\
    Z_1&:=N_{r_0}(P_{s_0}(Y_1)) \setminus N_{r_0}(P_{s_0}(Y_0)),\\
    Z_{0,1}&:=N_{r_0}(P_{s_0}(Y_0)) \cap N_{r_0}(P_{s_0}(Y_1)).
\end{align*}
Note that $d(Z_0, Z_1)> r_0$. For any $A\in B_{r_0}^{X,R,\omega}$, we define
    \begin{align*}
        A_0&:=\chi_{Z_0} A \chi_{Z_0} + \chi_{Z_0} A \chi_{Z_{0,1}}+ \chi_{Z_{0,1}} A \chi_{Z_0} + \chi_{Z_{0,1}} A \chi_{Z_{0,1}},\\
        A_1&:=\chi_{Z_1} A \chi_{Z_1} + \chi_{Z_1} A \chi_{Z_{0,1}}+ \chi_{Z_{0,1}} A \chi_{Z_1}.
    \end{align*}
    Since $A$ has propagation less than $r_0$, we have $A=A_0+A_1$. In addition, $A_0$ and $A_1$ have propagation less than $r_0$, $A_0$ is supported on $N_{r_0}(P_{s_0}(Y_0))\times N_{r_0}(P_{s_0}(Y_0))$ and $A_1$ is supported on $N_{r_0}(P_{s_0}(Y_1))\times N_{r_0}(P_{s_0}(Y_1))$. Moreover we have
    \begin{align*}
        \|A_0\|\leq 4\|a\|,~\|A_1\|\leq 3\|a\|.
    \end{align*}
    For any partial translation $v$ on $Y_0$, the functions $(A_0)^v$ is in  $\Gamma(Y_0,\mathcal{A}\otimes \mathcal{K})$ by the definition of the coarse algebra \eqref{coarse alg of subspace} since it is a restriction of $A^v$.  Therefore $A_0\in I_{r_0}^{\omega}$. Similarly, we have $A_1\in J_{r_0}^{\omega}$.
    
    Next, we prove the second condition. Let $\{\mu_0, \mu_1\}$ be a partition of unity subordinate to the cover $N_{1}(P_{s_0}(Y_0)) \cup N_{1}(P_{s_0}(Y_1))$. Let $A\in I^{X,R,\omega}\otimes \mathcal{K}\cap J^{X,R,\omega}\otimes \mathcal{K}$ be such that $\|A-A_0\|\leq \delta$ and $\|A-A_1\|\leq \delta$ for some $A_0\in \left(I^{X,R,\omega}\right)_{r_0}$ and $A_1\in \left(J^{X,R,\omega}\right)_{r_0}$. Define
    \begin{align*}
        B:=A_0\mu_1+A_1\mu_0.
    \end{align*}
    Then since the multiplication by $\mu_1$ does not increase the support, 
    \begin{align*}
        \supp{(A_0\mu_1)}\subset \supp(A_0)\subset N_{r_0}(P_{s_0}(Y_0))\times N_{r_0}(P_{s_0}(Y_0)).
    \end{align*}
   Since $\mu_1$ is supported on $N_1(P_{s_0}(Y_1))$, and $A_0$ has propagation less than $r_0$, we obtain that
    \begin{align*}
        \supp{(A_0\mu_1)}\subset N_{r_0+1}(P_{s_0}(Y_1)) \times N_1(P_{s_0}(Y_1))\subset N_{r_0+1}(P_{s_0}(Y_1)) \times N_{r_0+1}(P_{s_0}(Y_1)) 
    \end{align*}
    By arguing similarly for the support of $A_1\mu_0$, we can show that 
    $$\supp(B)\subset \left(N_{r_0+1}(P_{s_0}(Y_0))\cap N_{r_0+1}(P_{s_0}(Y_1))\right) \times \left(N_{r_0+1}(P_{s_0}(Y_0))\cap N_{r_0+1}(P_{s_0}(Y_1))\right).$$
    Also, we have 
    $$\|A-B\|=\|(A-A_0)\mu_1+(A-A_1)\mu_0\|\leq 2 \varepsilon=\delta.$$
    For any partial translation $v$ on $Y_i$, the functions $(A_0\mu_1)^v$ and $(A_1\mu_0)^v$ is in  $\Gamma(Y_i,\mathcal{A}\otimes \mathcal{K})$ and so we have $B\in I^{X,R,\omega}_{r_0+1}\cap J^{X,R,\omega}_{r_0+1}$.
Therefore, the family of triples $(I^{X,R,\omega},J^{X,R,\omega};B^{X,R,\omega})_{X,R,\omega}$ is uniformly excisive.   
\end{proof}
Moreover, for $s\geq s_0$, we take a net 
$$Z_r:= P_s\left(\bigsqcup_j Y_0^{(j)}\right)^{+r}\cap P_{s}\left(\bigsqcup_j Y_1^{(j)}\right)^{+r}\cap X$$ 
of $P_s\left(\bigsqcup_j Y_0^{(j)}\right)^{+r}\cap P_{s}\left(\bigsqcup_j Y_1^{(j)}\right)^{+r}$. Then we define 
\begin{equation*}
    \begin{aligned}
        B_r^{X,R,\omega,s}&:=C^*_{L,0}(P_{s_0}(Y_0)^{+r};\Gamma(Y_0,\mathcal{A}\otimes \mathcal{K}))_r+C^*_{L,0}(P_{s_0}(Y_1)^{+r};\Gamma(Y_1,\mathcal{A}\otimes \mathcal{K}))_{r}\\
        &\quad \quad \quad \quad +C^*_{L,0}\left(P_s\left(\bigsqcup_j Y_0^{(j)}\right)^{+r}\cap P_{s}\left(\bigsqcup_j Y_1^{(j)}\right)^{+r};\Gamma(Z_r,\mathcal{A}\otimes \mathcal{K})\right)_{sr},\\
        I_r^{X,R,\omega,s}&:=C^*_{L,0}(P_{s_0}(Y_0)^{+r};\Gamma(Y_0,\mathcal{A}\otimes \mathcal{K}))_{r}\\
        &\quad \quad \quad \quad +C^*_{L,0}\left(P_s\left(\bigsqcup_j Y_0^{(j)}\right)^{+r}\cap P_{s}\left(\bigsqcup_j Y_1^{(j)}\right)^{+r};\Gamma(Z_r,\mathcal{A}\otimes \mathcal{K})\right)_{sr},\\
        J_r^{X,R,\omega,s}&:=C^*_{L,0}(P_{s_0}(Y_1)^{+r};\Gamma(Y_1,\mathcal{A}\otimes \mathcal{K}))_{r}\\
        &\quad \quad \quad \quad +C^*_{L,0}\left(P_s\left(\bigsqcup_j Y_0^{(j)}\right)^{+r}\cap P_{s}\left(\bigsqcup_j Y_1^{(j)}\right)^{+r};\Gamma(Z_r,\mathcal{A}\otimes \mathcal{K})\right)_{sr},\\
    \end{aligned}  
\end{equation*}
where the Rips complex $P_{s}\left(\bigsqcup_j Y_i^{(j)}\right)$ is taken in terms of the separated coarse disjoint union $\bigsqcup_j Y_i^{(j)}$ and the $r$-neighborhood $P_{k}\left(\bigsqcup_j Y_i^{(j)}\right)^{+r}$ is taken relative to $P_{s_0}(X)$, i.e.
\begin{equation}\label{relative expansion}
    P_{s}\left(\bigsqcup_j Y_0^{(j)}\right)^{+r}=P_{s}\left(\bigsqcup_j Y_0^{(j)}\right)\cup \left\{p\in P_{s_0}(X);d\left(p,P_s\left(\bigsqcup_j Y_0^{(j)}\right)\cap P_{s_0}(X)\right)\leq r\right\}.
\end{equation} 
Then define a $C^*$-algebras $B^{X,R,\omega,s}:=\overline{\bigcup_{r\geq 0}B_r^{X,R,\omega,s}}$ and two ideals $I^{X,R,\omega,s}:=\overline{\bigcup_{r\geq 0}I_r^{X,R,\omega,s}}$ and $J^{X,R,\omega,s}:=\overline{\bigcup_{r\geq 0}J_r^{X,R,\omega,s}}$ of $B^{X,R,\omega,s}$. 

\begin{lemma}\label{uniformly excisive s}
    The family $\left(I^{X,R,\omega,s},J^{X,R,\omega,s};B^{X,R,\omega,s}\right)_{X,R,\omega,s}$ is uniformly excisive, where $X$ runs over all metric spaces with bounded geometry, $R$ runs over all increasing functions of $r$ satisfying $R_r> 3(k_0+1)r$, $\omega$ runs over all decompositions $\omega=(Y_0,Y_1)$ of $X$ and $s$ runs over all numbers $s\geq s_0$.
\end{lemma}

\begin{proof}
    This follows from the same arguments in the proof of Lemma \ref{excisive}.
\end{proof}

\subsection{Reduction to $(B(n),d_S)$}

In this section, we prove Lemma \ref{inductive step}.
    Let $x\in K_{*}^{r_0,\frac{1}{8}}(C^*_{L,0}( P_{s_0}(\bigsqcup X);\Gamma(\bigsqcup X,\mathcal{A}\otimes \mathcal{K})) $ be any element for any $r_0$ and $s_0$, where $\Gamma\left(\bigsqcup X,\mathcal{A}\otimes \mathcal{K}\right)$ is any stable coarse $\left(\bigsqcup_{X\in \mathcal{X}} X\right)$-algebra. It suffices to show that there exists $s\geq s_0$ such that $x=0$ in
    \begin{align*}
         K_{*}^{s,\frac{1}{8}}\left(C^*_{L,0}\left( P_{s}\left(\bigsqcup X\right);\Gamma\left(\bigsqcup X,\mathcal{A}\otimes \mathcal{K}\right)\right)\right).
    \end{align*}
We denote $\tilde{X}=\bigsqcup_{X\in \mathcal{X}} X$. For any subset $X'\subset \tilde{X}$, recall that $\Gamma(X',\mathcal{A}\otimes \mathcal{K})$ is the coarse $X'$-algebra defined in \eqref{coarse alg of subspace}. We regard the $r_2$ of Lemma~\ref{mayer-vietoris} for the uniformly excisive pair of Lemma~\ref{uniformly excisive s} as a function of $r_0$ and define a function $R:r_0\mapsto (3s_0+1)r_2$.
    Decompose
    \begin{align*}
    X=\bigsqcup_j Y_0^{(j)}(X) \cup \bigsqcup_j Y_1^{(j)}(X)
\end{align*}
such that the family $\{Y_i^{(j)}(X)\}_j$ is $3R_{r_0}$-separated and each $Y_i^{(j)}(X)$ is in $\mathcal{Y}$ for $i=0,1$ for some $\mathcal{Y}\in \mathfrak{D}$. Then this gives a decomposition 
\begin{align*}
    \tilde{X}= \bigsqcup_{j,X} Y_0^{(j)}(X) \cup \bigsqcup_{j,X} Y_1^{(j)}(X).
\end{align*}
We denote this decomposition of $\tilde{X}$ by $\omega=\left(\bigsqcup_{j,X} Y_0^{(j)}(X) , \bigsqcup_{j,X} Y_1^{(j)}(X)\right)$ and denote $Y_i:=\bigsqcup_{j,X} Y_i^{(j)}(X)$ where the distance between distinct components is taken to be infinite.
Note that $C^*_{L,0}\left(P_{s_0}(\bigsqcup X);\Gamma(\bigsqcup X,\mathcal{A}\otimes \mathcal{K})\right)_{r_0}$ is contained in 
    \begin{align*}
        C^*_{L,0}\left(P_{s_0}(Y_0)^{+r_0};\Gamma\left(Y_0,\mathcal{A}\otimes \mathcal{K}\right)\right)_{r_0} + C^*_{L,0}\left(P_{s_0}(Y_1)^{+r_0};\Gamma\left(Y_1,\mathcal{A}\otimes \mathcal{K}\right)\right)_{r_0} \subset B_{r_0}^{\tilde{X},R,\omega}.
    \end{align*}
Therefore, $x$ can be regarded as an element in $K^{\frac{1}{8},r_0}(B^{\tilde{X},R,\omega})$.
There exists $r_1\geq r_0$ and the boundary maps $\partial$, which fits in the commutative diagram below for every $s_1\geq s_0$:
\[ \begin{tikzcd}
K_*^{r_0,\frac{1}{8}}(B^{\tilde{X},R,\omega}) \arrow{r}{\partial} \arrow{d} &K_*^{r_1,\frac{1}{8}}(I^{\tilde{X},R,\omega}\cap J^{\tilde{X},R,\omega}) \arrow{d} \\
K_*^{r_0,\frac{1}{8}}(B^{\tilde{X},R,\omega,s_1}) \arrow{r}{\partial}& K_*^{(s_0+1)r_1,\frac{1}{8}}(I^{\tilde{X},R,\omega,s_1}\cap J^{\tilde{X},R,\omega,s_1}).
\end{tikzcd}
\]
Note that since we have
\begin{align*}
    P_{s_0}(N_{r}(Y_i(X))) \subset P_{s_0}(Y_i(X))^{+r} \subset P_{s_0} (N_{(s_0+1)r}(Y_i(X))),
\end{align*}
the right vertical map can be identified with the compositions of following natural maps
\begin{align*}
    K_{*}&^{r_1,\frac{1}{8}}\left(C^*_{L,0}\left(P_{s_0}\left(\bigsqcup_{j,X} Y_0^{(j)}(X)\right)^{+r}\cap P_{s_0}\left(\bigsqcup_{j,X} Y_1^{(j)}(X)\right)^{+r};\Gamma(Z_r,\mathcal{A}\otimes \mathcal{K})\right)\right) \\
    &\longrightarrow K_{*}^{r_1,\frac{1}{8}}\left(C^*_{L,0}\left(P_{s_0}\left(\bigsqcup_{j,j',X} N_{(s_0+1)r_0}(Y_0^{(j)}(X))\cap N_{(s_0+1)r_0} (Y_1^{(j')}(X))\right);\Gamma(Z_r,\mathcal{A}\otimes \mathcal{K})\right)\right) \\
    &\longrightarrow K_{*}^{r_1s_1,\frac{1}{8}}\left(C^*_{L,0}\left(P_{s_1}\left(\bigsqcup_{j,j',X} N_{(s_0+1)r_0}(Y_0^{(j)}(X))\cap N_{(s_0+1)r_0} (Y_1^{(j')}(X))\right);\Gamma(Z_r,\mathcal{A}\otimes \mathcal{K})\right)\right) \\
    &\longrightarrow K_*^{(s_0+1)r_1,\frac{1}{8}}\left(I^{\tilde{X},R,\omega,s_1}\cap J^{\tilde{X},R,\omega,s_1}\right).
\end{align*}
By the coarse invariance and the permanence under taking subspace of the twisted coarse Baum--Connes conjecture with respect to any stable coarse algebras, $\partial x$ is sent to $0$ for sufficiently large $s_1$. By the controlled-exactness of the Mayer-Vietoris sequence, there exists $r_2$ and
\begin{align*}
    y\in K_*^{r_2,\frac{1}{8}}(I^{\tilde{X},R,\omega,s_1}), ~  z\in K_*^{r_2,\frac{1}{8}}(J^{\tilde{X},R,\omega,s_1})
\end{align*}
such that $x=y+z$ in $K_*^{r_2,\frac{1}{8}}(B^{\tilde{X},R,\omega,s_1})$. 
Since we have
\begin{align*}
    I^{X,R,\omega,s_1}_{r_2}
    &\subset C^*_{L,0}\left(P_{s_1}\left(\bigsqcup_{j,X} Y_0^{(j)}(X)\right)^{+r_2};\Gamma(Y_0,\mathcal{A}\otimes\mathcal{K})\right)_{s_1r_2}\\
    &\subset C^*_{L,0}\left(P_{s_1}\left(\bigsqcup_j N_{(s_0+1)r_2}(Y_0^{(j)}(X))\right);\Gamma(Y_0,\mathcal{A}\otimes\mathcal{K})\right)_{s_1r_2}\\
    &=C^*_{L,0}\left(\bigsqcup_j P_{s_1}\left( N_{(s_0+1)r_2}(Y_0^{(j)}(X))\right);\Gamma(Y_0,\mathcal{A}\otimes\mathcal{K})\right)_{s_1r_2},
\end{align*} 
by the definition \eqref{relative expansion} of the relative $r_2$-neighborhood of $P_{s_1}\left(\bigsqcup_j Y_0^{(j)}(X)\right)$ and by the assumption that each $Y_0^{(j)}(X)\in \mathcal{Y}$, $y=0$ in $K_*^{r,\frac{1}{8}}(C^*_{L,0}(P_s(X));\Gamma(\bigsqcup X,\mathcal{A}\otimes \mathcal{K}))$ for sufficiently large $r$ and $s$. Similarly, we have $z=0$ in $K_*^{r,\frac{1}{8}}(C^*_{L,0}(P_s(X));\Gamma(\bigsqcup X,\mathcal{A}\otimes \mathcal{K}))$.
Therefore, we have
\begin{align*}
    x=y+z=0 \in K_*^{s,\frac{1}{8}}\left(C^*_{L,0}\left( P_{s}\left(\bigsqcup X\right);\Gamma\left(\bigsqcup  X;\mathcal{A}\otimes \mathcal{K})\right)\right)\right).
\qedhere
\end{align*}

\subsection{The twisted coarse Baum--Connes conjecture for $B(n)$}

In this subsection, we prove Lemma~\ref{base step}.

\begin{lemma}\label{family of subspaces}
    Let $\{Y_j\}_{j=1}^{\infty}$ be a sequence of subspaces of $X$. If $X$ satisfies the twisted coarse Baum--Connes conjecture with respect to any stable coarse algebras, then $\{Y_j\}$ satisfies the twisted coarse Baum--Connes conjecture uniformly with respect to any stable coarse algebras.
\end{lemma}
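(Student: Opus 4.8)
The plan is to reduce the required vanishing for the family $\{Y_j\}$ in two steps: first to the disjoint union $\bigsqcup_j X$ of copies of $X$, and then to $X$ itself equipped with an enlarged coefficient algebra, where the hypothesis applies directly. Throughout I fix an arbitrary stable coarse $\bigsqcup_j Y_j$-algebra $\Gamma(\bigsqcup_j Y_j,\mathcal{A}\otimes\mathcal{K})$, with the disjoint unions given the metric making distinct copies infinitely far apart. Then $\bigsqcup_j Y_j$ and $\bigsqcup_j X$ have bounded geometry with the same constants as $X$, and since distinct copies are at infinite distance we have $P_s(\bigsqcup_j Y_j)=\bigsqcup_j P_s(Y_j)$ and $N_r(\bigsqcup_j Y_j)=\bigsqcup_j N_r(Y_j)$, so all disjoint--union constructions commute with forming Rips complexes and $r$-neighbourhoods.

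\emph{Step 1: from $\bigsqcup_j Y_j$ to $\bigsqcup_j X$.} I view $\bigsqcup_j Y_j$ as a subspace of $\bigsqcup_j X$ and let $\Gamma(\bigsqcup_j X,\mathcal{A}\otimes\mathcal{K})$ be the stable coarse $\bigsqcup_j X$-algebra it determines through the inductive-limit construction~\eqref{coarse alg from subspace}. By Theorem~\ref{twist for subsp.} the inclusion induces an isomorphism $C^*(\bigsqcup_j X;\Gamma(\bigsqcup_j X,\mathcal{A}\otimes\mathcal{K}))\cong C^*(\bigsqcup_j Y_j;\Gamma(\bigsqcup_j Y_j,\mathcal{A}\otimes\mathcal{K}))$, and by Theorem~\ref{permanence for subsp.} it also induces an isomorphism between $\varinjlim_s K_*(C^*_L(\bigsqcup_j P_s(Y_j);\Gamma(\bigsqcup_j Y_j,\mathcal{A}\otimes\mathcal{K})))$ and $\varinjlim_s K_*(C^*_L(\bigsqcup_j P_s(X);\Gamma(\bigsqcup_j X,\mathcal{A}\otimes\mathcal{K})))$. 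Both isomorphisms are induced by the same subspace inclusion, hence are compatible with the evaluation-at-one maps, so the six-term exact sequences of the obstruction ideal $C^*_{L,0}$ inside $C^*_L$, together with the five lemma and exactness of filtered colimits, give
\[
\varinjlim_s K_*\bigl(C^*_{L,0}(\bigsqcup_j P_s(Y_j);\Gamma(\bigsqcup_j Y_j,\mathcal{A}\otimes\mathcal{K}))\bigr)\;\cong\;\varinjlim_s K_*\bigl(C^*_{L,0}(\bigsqcup_j P_s(X);\Gamma(\bigsqcup_j X,\mathcal{A}\otimes\mathcal{K}))\bigr).
\]
It therefore suffices to prove that the right-hand side vanishes, i.e.\ that $\bigsqcup_j X$ satisfies the twisted coarse Baum--Connes conjecture with respect to $\Gamma(\bigsqcup_j X,\mathcal{A}\otimes\mathcal{K})$.

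\emph{Step 2: from $\bigsqcup_j X$ to $X$.} I apply Lemma~\ref{uniform coarse BaumConnes of subspaces} with $M=P_s(X)$ --- whose net is $X$ --- for all scales $s$ simultaneously. It produces a stable coarse $X$-algebra $\Gamma(X;\ell^\infty(\mathbb{N},\mathcal{A}\otimes\mathcal{K})\otimes\mathcal{K})$, depending only on $\Gamma(\bigsqcup_j X,\mathcal{A}\otimes\mathcal{K})$ and not on $s$, together with isomorphisms on the $K$-theory of both Roe and localization algebras, implemented by the $*$-homomorphisms $\alpha,\beta$ of that lemma. Since $\alpha$ and $\beta$ are given by the same entrywise formulas in the Roe- and localization-algebra settings, they intertwine the evaluation maps and are natural with respect to the Rips structure maps $P_s\to P_{s'}$; passing to obstruction algebras and to $\varinjlim_s$ yields
\[
\varinjlim_s K_*\bigl(C^*_{L,0}(\bigsqcup_j P_s(X);\Gamma(\bigsqcup_j X,\mathcal{A}\otimes\mathcal{K}))\bigr)\;\cong\;\varinjlim_s K_*\bigl(C^*_{L,0}(P_s(X);\Gamma(X;\ell^\infty(\mathbb{N},\mathcal{A}\otimes\mathcal{K})\otimes\mathcal{K}))\bigr).
\]
The right-hand side is $0$ because, by hypothesis, $X$ satisfies the twisted coarse Baum--Connes conjecture with respect to \emph{any} stable coarse algebra, in particular with respect to $\Gamma(X;\ell^\infty(\mathbb{N},\mathcal{A}\otimes\mathcal{K})\otimes\mathcal{K})$. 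Chaining this with Step 1 shows $\varinjlim_s K_*(C^*_{L,0}(\bigsqcup_j P_s(Y_j);\Gamma(\bigsqcup_j Y_j,\mathcal{A}\otimes\mathcal{K})))=0$ for the arbitrary coarse algebra we fixed, which is precisely Definition~\ref{class of metric family of uniform cbc} for the family $\mathcal{X}=\{Y_j\}$.

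I expect the only real difficulty to be the bookkeeping behind the two displays: checking that the identifications coming from the subspace inclusion $\bigsqcup_j Y_j\hookrightarrow\bigsqcup_j X$ (Theorems~\ref{twist for subsp.} and~\ref{permanence for subsp.}) and from the $\ell^\infty(\mathbb{N})$-absorption (Lemma~\ref{uniform coarse BaumConnes of subspaces}) all intertwine the evaluation-at-one homomorphisms and commute with the structure maps of the Rips filtration, so that a single five-lemma argument, combined with exactness of the colimit over $s$, transports the vanishing through both reduction steps. Once that is in place, no new input is needed beyond the hypothesis on $X$.
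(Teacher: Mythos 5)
Your proposal is correct and follows essentially the same route as the paper: the paper's one-line proof invokes exactly the subspace permanence results (Theorems~\ref{twist for subsp.} and~\ref{permanence for subsp.}) to pass from $\bigsqcup_j Y_j$ to $\bigsqcup_j X$, and Lemma~\ref{uniform coarse BaumConnes of subspaces} to absorb the disjoint union into the enlarged coefficient algebra $\Gamma(X;\ell^{\infty}(\mathbb{N},\mathcal{A}\otimes\mathcal{K})\otimes\mathcal{K})$, where the hypothesis on $X$ applies. Your extra care about compatibility with the evaluation maps and the Rips structure maps is just the bookkeeping the paper leaves implicit.
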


\begin{proof}
    This follows from Corollary~\ref{permanence for subsp.} and Lemma~\ref{uniform coarse BaumConnes of subspaces}.
\end{proof}

We record a useful corollary, which is very specific to the twisted coarse Baum--Connes conjecture. 
\begin{corollary}\label{union}
    Let $X\subset Y_1,Y_2$. If $Y_1$ and $Y_2$ satisfy the twisted coarse Baum--Connes conjecture with respect to any stable coarse algebras, then so does $Y_1\cup Y_2$.
\end{corollary}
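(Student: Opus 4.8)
The plan is to run the controlled Mayer--Vietoris argument of the proof of Lemma~\ref{inductive step} for the single two--set decomposition $\omega=(Y_1,Y_2)$ of $Y_1\cup Y_2$, using the hypotheses on $Y_1$ and $Y_2$ (together with coarse invariance and subspace permanence) as a substitute for the inductive hypothesis used there. So it suffices to show that for every stable coarse algebra $\Gamma(Y_1\cup Y_2,\mathcal{A}\otimes\mathcal{K})$ one has $\varinjlim_s K_*\bigl(C^*_{L,0}(P_s(Y_1\cup Y_2);\Gamma(Y_1\cup Y_2,\mathcal{A}\otimes\mathcal{K}))\bigr)=0$; equivalently, that every controlled class $x\in K^{r_0,1/8}_*\bigl(C^*_{L,0}(P_{s_0}(Y_1\cup Y_2);\Gamma)\bigr)$ dies after enlarging the Rips scale.

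First I would record the two geometric inputs. For $r>0$ write $Y_i^{+r}=N_r(Y_i)$ with the metric induced from $Y_1\cup Y_2$. Since every point of $N_r(Y_i)$ lies within $r$ of $Y_i\subset N_r(Y_i)$, the inclusion $Y_i\hookrightarrow N_r(Y_i)$ is a coarse equivalence, so by the coarse invariance of the twisted coarse Baum--Connes conjecture (Theorem~\ref{coarse equivalence} together with Lemma~\ref{homotopy}) the space $N_r(Y_i)$ satisfies the conjecture with respect to any stable coarse algebra. Moreover $N_r(Y_1)\cap N_r(Y_2)$ is a subspace of $N_r(Y_1)$, hence also satisfies the conjecture by Theorem~\ref{permanence for subsp.}. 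Then, exactly as in Section 5.1, I would form the filtered $C^*$-algebra $B^\omega$ and its filtered ideals $I^\omega,J^\omega$ (and the scaled versions $B^{\omega,s},I^{\omega,s},J^{\omega,s}$) attached to the decomposition $\omega=(Y_1,Y_2)$; Lemma~\ref{excisive} already applies to an arbitrary decomposition, so the triple $(I^\omega,J^\omega;B^\omega)$ (and its scaled analogue) is uniformly excisive and we may invoke the controlled Mayer--Vietoris sequence \cite[Proposition~7.7]{GuentnerWillettYu2024DynamicalComplexity}, with boundary map $\partial_c$.

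Now the argument proceeds as in the proof of Lemma~\ref{inductive step}. Given $x\in K^{r_0,1/8}_*\bigl(C^*_{L,0}(P_{s_0}(Y_1\cup Y_2);\Gamma)\bigr)$, Lemma~\ref{inclusion to the subspaces} sends $x$ into $K^{2s_0+r_0,1/8}_*(B^\omega)$. Its boundary $\partial_c x$ lies in $K^{r_1,1/8}_*(I^\omega\cap J^\omega)$, and since $\Gamma(Y_0^{+r}\cap Y_1^{+r},\mathcal{A}\otimes\mathcal{K})=\Gamma(Y_0^{+r},\mathcal{A}\otimes\mathcal{K})\cap\Gamma(Y_1^{+r},\mathcal{A}\otimes\mathcal{K})$ this group is identified, as in Section 5.2, with a controlled $K$-group of the obstruction algebra of $P_{s_0}\bigl(N_{r_1}(Y_1)\cap N_{r_1}(Y_2)\bigr)$. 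Because that space satisfies the conjecture, $\partial_c x$ is killed after enlarging the Rips scale to some $s_1$; controlled exactness then yields $x=y+z$ in $K^{r_2,1/8}_*(B^{\omega,s_1})$ with $y\in K^{r_2,1/8}_*(I^{\omega,s_1})$ and $z\in K^{r_2,1/8}_*(J^{\omega,s_1})$. Finally, using $I^{\omega,s_1}_{r}\subset I^{\omega}_{s_1 r}$ and that $N_r(Y_1)$ satisfies the conjecture (so that, via Theorem~\ref{twist for subsp.} and Theorem~\ref{permanence for subsp.}, the relevant obstruction $K$-theory vanishes in the scale limit), $y$ dies at large scale; symmetrically $z$ dies, and hence $x=0$ at large scale. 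Since $r_0,s_0$ and $\Gamma$ were arbitrary, $Y_1\cup Y_2$ satisfies the conjecture.

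The main obstacle is bookkeeping rather than a new idea: one must check that the Mayer--Vietoris apparatus of Section 5.2 — which was written with $r$-separated decompositions and the class $\mathfrak D$ in view — goes through verbatim for a plain two-set cover $(Y_1,Y_2)$, i.e. that Lemmas~\ref{excisive} and \ref{inclusion to the subspaces} never use separation, and that the enlarged sets $N_r(Y_i)$ (which may strictly contain $Y_i$ but are coarsely equivalent to it) legitimately inherit the conjecture and the relevant vanishing of controlled obstruction $K$-theory. It is worth emphasizing that \emph{no} coarse excisiveness of the pair $(Y_1,Y_2)$ — such as $N_r(Y_1)\cap N_r(Y_2)\subset N_{s(r)}(Y_1\cap Y_2)$ — is required: the arbitrary subspace $N_{r_1}(Y_1)\cap N_{r_1}(Y_2)$ can simply be fed into Theorem~\ref{permanence for subsp.}, and this is precisely what makes the statement special to the twisted conjecture with respect to stable coarse algebras.
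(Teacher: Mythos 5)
Your proposal is correct and is essentially the paper's argument: the paper proves the corollary by noting that $Y_1\cup Y_2$ admits (for every $r$, trivially, since each side is a single piece and the separation condition is vacuous) a decomposition over the family $\{Y_1,Y_2\}\in\mathfrak{D}$ and then citing Lemma~\ref{inductive step}, whose proof is exactly the controlled Mayer--Vietoris argument you spell out, with coarse invariance handling $N_r(Y_i)\simeq Y_i$ and Theorem~\ref{permanence for subsp.} handling the intersection. Your closing observation that no coarse excisiveness of the pair is needed is precisely the point the paper makes in calling this ``very specific to the twisted coarse Baum--Connes conjecture.''
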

\begin{proof}
    This follows from Lemma \ref{inductive step}.
\end{proof}

\begin{corollary}\label{B(n)}
    Assume $G$ is hyperbolic relative to $\{H_i\}_{i=1}^N$.
    If every $H_i$ satisfies the twisted coarse Baum--Connes conjecture with any stable coarse algebras, then so does $B(n)$.
\end{corollary}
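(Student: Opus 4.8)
The plan is to argue by induction on $n$. The base case $n=0$ is immediate, since $B(0)=\{e\}$ is a single point and thus satisfies the conjecture (for instance by \cite[Theorem~3.7]{DengGuo2024TwistedRoeAlgebras}). For the inductive step, assume $B(n-1)$ satisfies the twisted coarse Baum--Connes conjecture with respect to any stable coarse algebras. From $B(n)=\bigl(\bigcup_{j}B(n-1)H_j\bigr)\cup B(n-1)S$, together with $B(n-1)\subset B(n-1)H_j$ and $B(n-1)S\subset N_1^{d_S}(B(n-1))$, one sees that $\bigcup_{j}B(n-1)H_j\subset B(n)\subset N_1^{d_S}\bigl(\bigcup_j B(n-1)H_j\bigr)$, so the inclusion is a coarse equivalence; by coarse invariance of the conjecture (Lemma~\ref{homotopy} and Theorem~\ref{coarse equivalence}) it therefore suffices to prove that $\bigcup_{j=1}^N B(n-1)H_j$ satisfies the conjecture. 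Since $B(n-1)$ is contained in every $B(n-1)H_j$, an $(N-1)$-fold application of Corollary~\ref{union} reduces this to showing that each set $B(n-1)H_j$ satisfies the conjecture.

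Fixing $j$, I would prove that the one-element family $\{B(n-1)H_j\}$ decomposes over $\mathfrak{D}$, so that Lemma~\ref{inductive step} gives $\{B(n-1)H_j\}\in\mathfrak{D}$, which is the assertion. Given $r>0$, choose $L>r$, let $\kappa=\kappa(L)$ be as in Lemma~\ref{decomposition of Osin}, and put $Y=\{x\in G:d_S(x,B(n-1))\le\kappa\}$. Writing $B(n-1)H_j=\bigsqcup_{g\in R(n-1)}gH_j$ and intersecting each coset with $Y$ gives
\[
B(n-1)H_j \;=\; X_0 \;\cup\; \bigsqcup_{g\in R(n-1)}\bigl(gH_j\setminus Y\bigr), \qquad X_0:=B(n-1)H_j\cap Y .
\]
By Lemma~\ref{decomposition of Osin} the subspaces $gH_j\setminus Y$ are $L$-separated, so (as $X_0$ is a single piece) this is an $r$-decomposition over the metric family $\mathcal{Y}_{L}:=\{X_0\}\cup\{\,gH_j\setminus Y:g\in R(n-1)\,\}$, and it remains to check $\mathcal{Y}_{L}\in\mathfrak{D}$.

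For this I would consider $\bigsqcup_{Z\in\mathcal{Y}_{L}}Z=X_0\sqcup\bigsqcup_{g}(gH_j\setminus Y)$, with distinct members placed at infinite distance. Because $X_0$ is at infinite distance from the remaining components, condition~(3) of Definition~\ref{stable coarse algebra}, applied to multiplication by the characteristic function of $X_0$, shows that any stable coarse algebra on this disjoint union splits as a direct sum along $X_0$ and $\bigsqcup_g(gH_j\setminus Y)$, and hence so do the corresponding twisted obstruction algebras; it thus suffices to treat the two parts separately. For the first, $B(n-1)\subset X_0\subset N_{\kappa}^{d_S}(B(n-1))$, so $X_0$ is coarsely equivalent to $B(n-1)$ and hence satisfies the conjecture by the inductive hypothesis and coarse invariance, i.e.\ $\{X_0\}\in\mathfrak{D}$. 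For the second, left translation by $g^{-1}$ is an isometry of $(G,d_S)$ carrying $gH_j\setminus Y$ onto the subset $H_j\setminus g^{-1}Y$ of $H_j$, so $\{\,gH_j\setminus Y:g\in R(n-1)\,\}$ is componentwise isometric to a countable family of subspaces of $(H_j,d_S|_{H_j})$; since peripheral subgroups of relatively hyperbolic groups are undistorted, $(H_j,d_S|_{H_j})$ is coarsely equivalent to $H_j$ with its word metric, which satisfies the conjecture by hypothesis, and so Lemma~\ref{family of subspaces} (with coarse invariance) puts this family in $\mathfrak{D}$. Therefore $\mathcal{Y}_{L}\in\mathfrak{D}$, completing the induction.

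The step I expect to be most delicate is the verification that $\mathcal{Y}_{L}\in\mathfrak{D}$: one must keep careful track that the coarse algebras on $X_0$, on $gH_j\setminus Y$, and on the disjoint union are precisely the ones obtained by the restriction \eqref{coarse alg of subspace}, and that the splitting of the obstruction algebra over the infinite-distance decomposition is a genuine consequence of stability (condition~(3) of Definition~\ref{stable coarse algebra}) rather than an added hypothesis. A secondary, convention-dependent point is the identification of each $H_j$ with its image in $(G,d_S)$, which relies on the undistortedness of peripheral subgroups.
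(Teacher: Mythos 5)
Your argument is correct and takes essentially the same route as the paper: induction on $n$, Osin's decomposition lemma (Lemma~\ref{decomposition of Osin}) fed into Lemma~\ref{inductive step}, together with Corollary~\ref{union}, Lemma~\ref{family of subspaces} and coarse invariance. The only cosmetic differences are that you absorb $B(n-1)S$ via the coarse equivalence of $\bigcup_j B(n-1)H_j$ with $B(n)$ (the paper instead works with the pieces $B(n;i)=B(n-1)S\cup B(n-1)H_i$ and applies Corollary~\ref{union} to those), and that you spell out the verification that the family $\mathcal{Y}_L$ lies in $\mathfrak{D}$ (splitting along the infinite-distance components using condition~(3) of Definition~\ref{stable coarse algebra} and translating the cosets into $H_j$), a step the paper leaves implicit.
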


\begin{proof}
    Induction on $n$.  When $n=1$, use Lemma~\ref{family of subspaces} and Corollary~\ref{union} to
    \begin{align*}
        B(1)=S \cup \left(\bigcup_i H_i\right).
    \end{align*}
    Assume the conclusion holds for $1,2,\cdots,n-1$. Denote 
    \begin{align*}
        B(n;i)= B(n-1)\cdot S\cup  B(n-1) \cdot H_i.
    \end{align*}
    Since $B(n)=\bigcup_{i=1}^n B(n;i)$, it suffices to show that $B(n;i)$ satisfies the twisted coarse Baum--Connes conjecture with any stable coarse algebras, again by Corollary~\ref{union}. But this follows from Lemma~\ref{decomposition of Osin}, Lemma~\ref{inductive step} and Corollary~\ref{union}.
\end{proof}

Now Lemma~\ref{base step} follows from Lemma~\ref{family of subspaces} and Corollary~\ref{B(n)}.

\section*{Acknowledgements} 

We would like to thank Prof. Guoliang Yu for his comments and discussions on this topic.

\bibliographystyle{alpha}
\bibliography{ref}
\end{document}